\newtheorem{thm}{Theorem}[section]
\newtheorem{lem}[thm]{Lemma}
\theoremstyle{definition}
\newtheorem{rem}[thm]{Remark}
\newcommand{\floor}[1]{\left\lfloor #1 \right\rfloor}
\newcommand{\ceil}[1]{\left\lceil #1 \right\rceil}
\newcommand{\norm}[2]{ \| #1 \|_{ #2 } }
\newcommand{\acc}[1]{ \{ #1 \} }
\newcommand{\bigbhaa}[1]{\big[ #1 \big]}
\newcommand{\bighaa}[1]{\big( #1 \big)}
\newcommand{\bignorm}[2]{\big\| #1 \big\|_{ #2 } }
\newcommand{\bigabs}[1]{\big| #1 \big|}
\newcommand{\bigaccv}[2]{\big\{ #1 \, \big| \, #2 \big\}}
\newcommand{\Bigbhaa}[1]{\Big[ #1 \Big]}
\newcommand{\Biggbhaa}[1]{\Bigg[ #1 \Bigg]}
\newcommand{\Bighaa}[1]{\Big( #1 \Big)}
\newcommand{\Bigaccv}[2]{\Big\{ #1 \, \Big| \, #2 \Big\}}
\newcommand{\bigghaa}[1]{\bigg( #1 \bigg)}
\newcommand{\eps}{\epsilon}
\newcommand{\lracc}[1]{\left\{ #1 \right\}}
\newcommand{\R}[1]{\mathbb{R}^{ #1 }}
\newcommand{\N}[1]{\mathbb{N}^{ #1 }}
\newcommand{\Np}[1]{\mathbb{N}_+^{ #1 }}
\newcommand{\Dom}{\operatorname{Dom}}
\newcommand{\Qn}{Q_n}
\newcommand{\signum}{\operatorname{\text{sgn}}}
\newcommand{\ord}{\operatorname{\text{ord}}}
\def\Xint#1{\mathchoice
   {\XXint\displaystyle\textstyle{#1}}%
   {\XXint\textstyle\scriptstyle{#1}}%
   {\XXint\scriptstyle\scriptscriptstyle{#1}}%
   {\XXint\scriptscriptstyle\scriptscriptstyle{#1}}%
   \!\int}
\def\XXint#1#2#3{{\setbox0=\hbox{$#1{#2#3}{\int}$}
     \vcenter{\hbox{$#2#3$}}\kern-.5\wd0}}
\def\dashint{\Xint-}
\def\Dn{\mathcal{D}_n}
\newcommand{\framed}[2]{\begin{center}\framebox{\begin{minipage}{#1} #2 \end{minipage}}\end{center}}
\newcommand\weakto{\rightharpoonup}
\newcommand{\xweakto}[1]{ \stackrel{ #1 }{\rightharpoonup} }
\newcommand{\xto}[1]{ \xrightarrow{ #1 } }
\DeclareRobustCommand{\cev}[1]{%
  \mathpalette\do@cev{#1}%
}
\newcommand{\do@cev}[2]{%
  \fix@cev{#1}{+}%
  \reflectbox{$\m@th#1\vec{\reflectbox{$\fix@cev{#1}{-}\m@th#1#2\fix@cev{#1}{+}$}}$}%
  \fix@cev{#1}{-}%
}
\newcommand{\fix@cev}[2]{%
  \ifx#1\displaystyle
    \mkern#23mu
  \else
    \ifx#1\textstyle
      \mkern#23mu
    \else
      \ifx#1\scriptstyle
        \mkern#22mu
      \else
        \mkern#22mu
      \fi
    \fi
  \fi
}
\title{Asymptotic analysis of boundary layers in a 
repulsive particle system}
\author{Cameron L. Hall}
\address{C. L. Hall \\ Mathematical Institute \\ University of Oxford
\\ Andrew Wiles Building \\ Radcliffe Observatory Quarter
\\ Woodstock Road \\ Oxford \\ OX2 6GG \\ United Kingdom}
\author{Thomas Hudson}
\address{T. Hudson \\ \'Ecole des Ponts ParisTech, CERMICS \\
6 et 8, Avenue Blaise Pascal \\ 77455 Champs-sur-Marne \\ France}
\author{Patrick van Meurs}
\address{P. van Meurs \\ Faculty of Mathematics and Physics \\ Kanazawa University, Kakuma \\ 920-1192, Kanazawa \\ Japan}
\begin{document}
\begin{abstract}
This paper studies the boundary behaviour at mechanical equilibrium at the ends of a finite interval of a class of systems of interacting particles with monotone decreasing repulsive force. Our setting covers pile-ups of dislocations, dislocation dipoles and dislocation walls. The main challenge is to control the nonlocal nature of the pairwise particle interactions. Using matched asymptotic expansions for the particle positions and rigorous development of an appropriate energy via $\Gamma$--convergence, we obtain the equilibrium equation solved by the boundary layer correction, associate an energy with an appropriate scaling to this correction, and provide decay rates into the bulk.
\end{abstract}

\maketitle

\noindent \textbf{Keywords}: particle system, boundary layer, discrete-to-continuum asymptotics, matched asymptotic expansions, $\Gamma$-convergence. 
\\
\smallskip 
\textbf{MSC}: 
74Q05, 
74G10, 
41A60. 

\section{Introduction}

A wide variety of physical and mathematical phenomena may be modelled as a system of 
interacting identical particles. One of the simplest examples of
such an application is a collection of electrostatically--charged 
classical particles, but other examples include atoms in
a fluid \cite{LennardJones24}, dislocations in a crystalline solid 
\cite{HirthLothe}, Ginzburg--Landau vortices in a superconductor 
\cite{BethuelBrezisHelein}, spin states in an atomic lattice 
\cite{Nussinov2015}, eigenvalues of random matrices \cite{Wigner1955,Dyson1962}, or simply a collection of hard spheres
\cite{MH53}. A core challenge in studying such particle systems is 
to identify the features of the thermodynamic equilibrium in a system where the number of particles is very large. At low temperatures, this is closely related to finding the configurations with the lowest total potential energy, i.e.~the \emph{mechanical equilibria}.

Typically, low potential energy configurations in large particle 
systems exhibit \emph{crystallisation} phenomena, i.e.~particles
arrange themselves into a regular structure with a slowly--varying density. However, it is often difficult to determine a similarly
detailed description of the particle behaviour in regions where the density varies rapidly, for
example at a free surface, or close to a rigid confining
structure. Such boundary properties and other effects of finite system size are a significant theme in current 
scientific research \cite{BraidesCicalese2007,Voskoboinikov2009,Hall2010a,
ScardiaSchloemerkemperZanini2011,Ivanov2013,Wennberg2013,PetracheSerfaty2014,
Zschocke2015} since it is through boundary interactions
that a large number of physical processes take place, some important examples being contact \cite{G80}, catalysis \cite{TT14} and crystal growth \cite{RS06}.

This work contributes to this body of research by studying a simple model for a system of particles
confined to a finite interval, and obtains concrete 
mathematical results concerning the boundary behaviour at mechanical equilibrium, advancing some of the mathematical techniques currently available to study such boundary effects in the process. In the model considered, particles are assumed to
interact via a repulsive pair potential that decays as the distance between particles increases. For a sufficiently rapid decay of the repulsive interactions, we obtain an asymptotic description of the boundary behaviour by developing a matched asymptotic expansion for the particle positions at equilibrium, and an asymptotic 
representation of the minimal energy via the technique of 
$\Gamma$--convergence. A particular challenge for this task is that we do not rely on a finite interaction range, and instead include general long-range interactions between particles. 

Our study takes place in the context of a variety of recent
mathematical results aiming to better understand surface
effects in similar particle systems, notably \cite{Hall2010, GarronivanMeursPeletierScardia14ArXiv} in the setting of dislocation pile-ups. While \cite{Hall2010} focuses on the case in which the interaction potential is homogeneous, and \cite{GarronivanMeursPeletierScardia14ArXiv,Hall2011} studies boundary layers in a continuum model for the particle density, our contribution is the derivation of a \emph{discrete} description of the boundary layer for a general class of interaction potentials. Two particular examples that we have in mind are pile-ups of dislocation walls \cite{GarronivanMeursPeletierScardia14ArXiv} and pile-ups of dislocation dipoles \cite{Hall2010}.

\subsection{Setting}
\label{sec:Setting}
We suppose that $n+1$ identical particles are confined to lie in 
the interval $[0,n]$, and all pairs of particles mutually interact via a potential $V : \R{} \to [0,+\infty]$, which is a function of  the inter-particle distance.
Labelling the position of particle $i$ as $\chi(i)$, the total
potential energy of the system is thus
\begin{equation*}
  \sum_{k=1}^n \sum_{j=0}^{n-k} V \big(\chi(j+k) - \chi(j)\big).
\end{equation*}

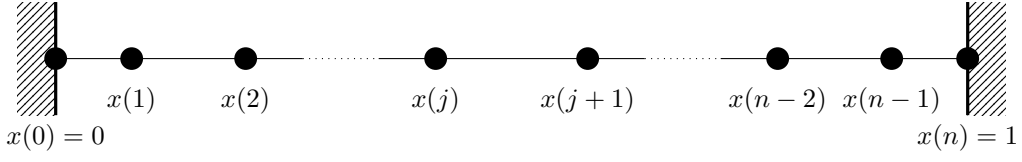
\begin{figure}[t]
\centering
\begin{tikzpicture}[scale=0.5, >= latex]
	\draw (0,-1.5) node [below] {$x(0) = 0$}; 
	\draw (2,-0.5) node [below] {$x(1)$};  
	\draw (5,-0.5) node [below] {$x(2)$};  
	\draw (10,-0.5) node [below] {$x(j)$};
	\draw (14,-0.5) node [below] {$x(j+1)$};
	\draw (19,-0.5) node [below] {$x(n-2)$};
	\draw (22,-0.5) node [below] {$x(n-1)$};
	\draw (24,-1.5) node [below] {$x(n) = 1$};
	\draw[very thick] (0,-1.5) -- (0,1.5);
	\draw (0,0) -- (6.5,0);
	\draw[dotted] (6.5,0) -- (8.5,0);
	\draw (8.5,0) -- (15.5,0);
	\draw[dotted] (15.5,0) -- (17.5,0);
	\draw (17.5,0) -- (24,0);
	\draw[very thick] (24,-1.5) -- (24,1.5);
	\fill[pattern = north east lines] (-1, -1.5) rectangle (0, 1.5);
	\fill[pattern = north east lines] (24, -1.5) rectangle (25, 1.5);	
	     
    \foreach \x in {0,2,5,10,14,19,22,24}
      {
      \fill (\x, 0) circle (0.3);
      }
\end{tikzpicture}
\caption{The setting of the particle system in rescaled 
coordinates.}\label{fig:PS:bdd}
\end{figure}

Since we subsequently wish to consider the system with $n$ large,
it is convenient to introduce rescaled coordinates 
$x(i):=\chi(i)/n$, so that $x(i)\in[0,1]$ for all $n$. Applying
this rescaling, we
are led to consider the following equivalent scenario (see also Figure \ref{fig:PS:bdd}), which is similar to that studied in \cite{VanMeursMunteanPeletier14}:
\begin{gather*} 
  \Dn := \{x\in[0,1]^{n+1}\,|\, 0 =: x(0) \leq x(1) \leq \ldots 
  \leq x(n-1) \leq x(n) := 1 \}, \\
  E_n : \Dn \to [0, +\infty], 
  \qquad E_n(x) := \frac1n \sum_{k = 1}^n \sum_{j = 0}^{n-k}
   V\big(n \big[x(j+k) - x(j)\big] \big).
\end{gather*}
Here, $\Dn$ represents the set of all possible valid
positions, and $E_n(x)$ is the average energy per particle in the
system due to the interactions with all the other particles in the
configuration described by $x$. Our basic assumptions on the potential $V : \R{} \to [0, +\infty]$ are:
\medskip
\framed{0.9\textwidth}{\begin{description}[leftmargin=*]
  \item[(Reg)] $V:\R{}\setminus\{0\}\to(0,+\infty)$ is even 
    and $C^2$;\vspace{1mm}
  \item[(Sing)] $V(x)\to V(0)=+\infty$ as $x\downarrow 0$ or $x\uparrow0$;\vspace{1mm}
  \item[(Cvx)] for each $x\in(0,+\infty)$, there exists 
    $\lambda(x)>0$ such that $V$ is
    $\lambda(x)$--convex on $(0,x)$, i.e. 
    \begin{equation*}
      \inf_{(0,x)} V'' \geq \lambda(x) > 0; 
    \end{equation*}
  \item[(Dec)] $V(x),V'(x)\to0$ as $|x|\to\infty$, and
    there exists $a>1$ and constants 
    $c_\delta$ such that for any $\delta>0$,
    \begin{equation*}
      V''(x)\leq c_\delta |x|^{-a-2}
      \qquad\text{for any }x\in\R{}\setminus(-\delta,\delta).
    \end{equation*}
\end{description}}\medskip

\noindent
Figure \ref{fig:V} shows a typical graph for such a potential $V$,
and a prototypical example is $V(x)=|x|^{-a}$ with $a>1$.
We note the following immediate consequences of our basic 
assumptions.
\begin{itemize}
  \item As $V$ is non-negative, $E_n(x) \geq 0$.
  \item Together, {\bf(Cvx)} and {\bf(Dec)} demonstrate that
$V''$ is integrable on $\R{}\setminus(-\delta,\delta)$,
so by applying the Fundamental Theorem of Calculus on the
interval $(x,+\infty)$, we find that there exist constants
$c_\delta',c_\delta''>0$ and $a>1$ such that
\begin{equation}
  0 > V'(x) \geq -c'_\delta |x|^{-a-1} \quad\text{and}\quad
  0 < V(x) \leq c''_\delta |x|^{-a} \quad \text{for } x > \delta.
  \label{for:ests:V:Vprime}
\end{equation}
    Since
    $V'(x) < 0$ for $x\in(0, \infty)$, the 
    interactions are repulsive, and hence we must `confine' the 
    particles in order to ensure that they remain within a compact 
    set: here, our choice is to enforce
    confinement by fixing the outermost particles.
    
\item As observed in \cite{Geers2013, VanMeursMunteanPeletier14}, {\bf(Cvx)} implies that $E_n$ is strictly convex on
$\Dn$. Moreover, {\bf(Sing)} makes the energy infinite on the set
\begin{equation*}
  \partial \Dn := \big\{x\in\Dn\,\big|\,x(i-1) = x(i)\text{ for some }i\in\{1,\ldots,n\}\big\}.
\end{equation*} Therefore, $E_n$ has a unique minimiser, which is contained in $\Dn\setminus \partial\Dn$. Since $E_n$ is differentiable on this set, the minimiser satisfies the
following force balance:
\begin{equation}
  \frac{\partial E_n(x)}{\partial x(i)} =
  -\sum_{\substack{ k=0 \\ k \neq i} }^n 
  V'\big(n[x(k)-x(i)]\big)=0,\qquad i=1,\ldots,n-1.
  \label{eq:equilibrium}
\end{equation}
\end{itemize}

\begin{figure}[t]
\centering
\begin{tikzpicture}[scale=0.6, >= latex]
\draw[->] (-5.2,0) -- (5.2,0) node[right] {$r$};
\draw[->] (0,0) -- (0,5.2);
\draw[thick] (0.018,5) .. controls (0.09,1) and (1,0.176) .. (5,0.04);
\draw[thick] (-0.018,5) .. controls (-0.09,1) and (-1,0.176) .. (-5,0.04);
\draw (-2,5.2) node[below] {$V(r)$};
\end{tikzpicture}
\caption{Typical profile for the interaction potential $V$.}\label{fig:V}
\end{figure}
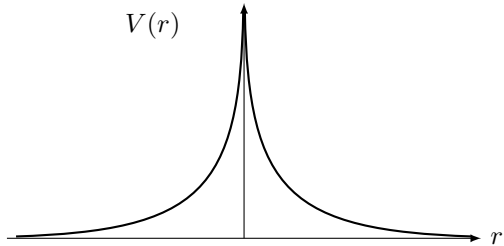

\subsection{Main results}
\label{sec:Results}
In \cite{Geers2013, VanMeursMunteanPeletier14}, it was shown that $E_n$ $\Gamma$--converges, and the limit energy has a unique minimiser corresponding to a uniform density of particles. Since the topology used to obtain these previous $\Gamma$--convergence results (i.e.~the narrow or weak topology of measures) cannot detect `microscopic' variations in particle density, here, we strengthen these convergence results by obtaining a finer characterisation of the minimiser of $E_n$. In particular, we seek
to describe the \emph{boundary layers} which appear at both ends of the bounded interval; see Figure \ref{fig:x:on:01} for a numerical illustration.
To do so, we use two different approaches: formal asymptotic analysis and $\Gamma$--development of the energy $E_n$. Our formal analysis gives us a detailed description of the equilibrium particle positions in the boundary layers when $n$ is large enough, while the $\Gamma$--development establishes a precise notion of convergence for the particle positions and boundary-layer energy as $n \to \infty$.

In \S\ref{sec:Formal1}, formal 
asymptotic analysis is used both to obtain the equations of equilibrium in the bulk, and to show
that the correct scaling for the boundary layer is in terms of the particle positions
$\chi(i) = n x(i)$, with no intermediate regime between the bulk and discrete scaling. To carry out this analysis, we require a slightly stronger condition on the differentiability of $V$, and so in this section, as well as the basic assumptions detailed in \S\ref{sec:Setting}, we require the additional regularity assumption\medskip
\begin{center}
\framed{0.7\textwidth}{\begin{description}[leftmargin=*]
\item[(Reg+)] $V \in C^3(0, \infty)$, with $| V''' (x) | \lesssim |x|^{-a-3}$ for $|x| > 1$.
\end{description}}\end{center}\medskip\noindent
In \S\ref{sec:Formal1-BL}, this permits us to obtain the result that the particle positions $\chi(i)$ in the boundary layer approximate the solution of the following infinite system of equations as $n \to \infty$:
\begin{equation} \label{intro:eqn:BL}
  \left\{ \begin{aligned}
    &0 = \sum_{\substack{k=0 \\ k \neq i}}^{\infty} V'\big[\chi(i) - \chi(k) \big], 
    &&i = 1, 2, 3, \ldots, \\
    &\chi(i) - \chi(i-1) = 1 + o(1), 
    &&\text{as } i \to \infty.
  \end{aligned} \right.
\end{equation}

In \S\ref{sec:Formal2}, we again use formal methods to obtain 
further terms in an asymptotic expansion of both the boundary layer
and the bulk behaviour in the particular case $V(x)=|x|^{-a}$, 
which is the prototypical potential satisfying the assumptions
detailed in \S\ref{sec:Setting}. Using the method of matched asymptotic expansions, we fully characterise particle locations in both the bulk and the boundary layer up to errors that are asymptotically smaller than $n^{-(a-1)}$ as $n \to \infty$. To the best of our knowledge, this characterisation of higher order terms in the asymptotic expansion for particle systems with nonlocal interactions is new.

Additionally, this analysis 
allows us to characterise the behaviour of $\chi(i)$, the solution
to \eqref{intro:eqn:BL}, as $i\to\infty$ by a more detailed description of the $o(1)$ term.
When $V(x) = |x|^{-a}$, we find that
\begin{equation} \label{intro:eqn:decay}
 \chi(i) - \chi(i-1) = 1 - \frac{i^{-(a-1)}}{\zeta(a) (a^3-a)} + o[i^{-(a-1)}], \quad \text{as } i \to \infty,
\end{equation}
where $\zeta(s)$ is the Riemann zeta function. In fact, this decay behaviour of $\chi$ remains valid for more general potential $V$ satisfying {\bf(Reg)}, {\bf(Sing)} and {\bf(Cvx)} whenever the tails of $V^{(k)}$ are asymptotically equivalent to $d^k / dx^k \, (x^{-a})$ up to sufficiently large $k$ (see \eqref{eq:GenPotential-Derivs} for details). In that case, the constant $[\zeta(a) (a^3-a)]^{-1}$ takes the more general form $[ (a-1) \sum_{j=1}^{\infty} V''(j) j^2 ]^{-1}$.


\begin{figure}[b]
\centering
\begin{tikzpicture}[scale=1.5]
\node (label) at (0,0){\includegraphics[width=3in]{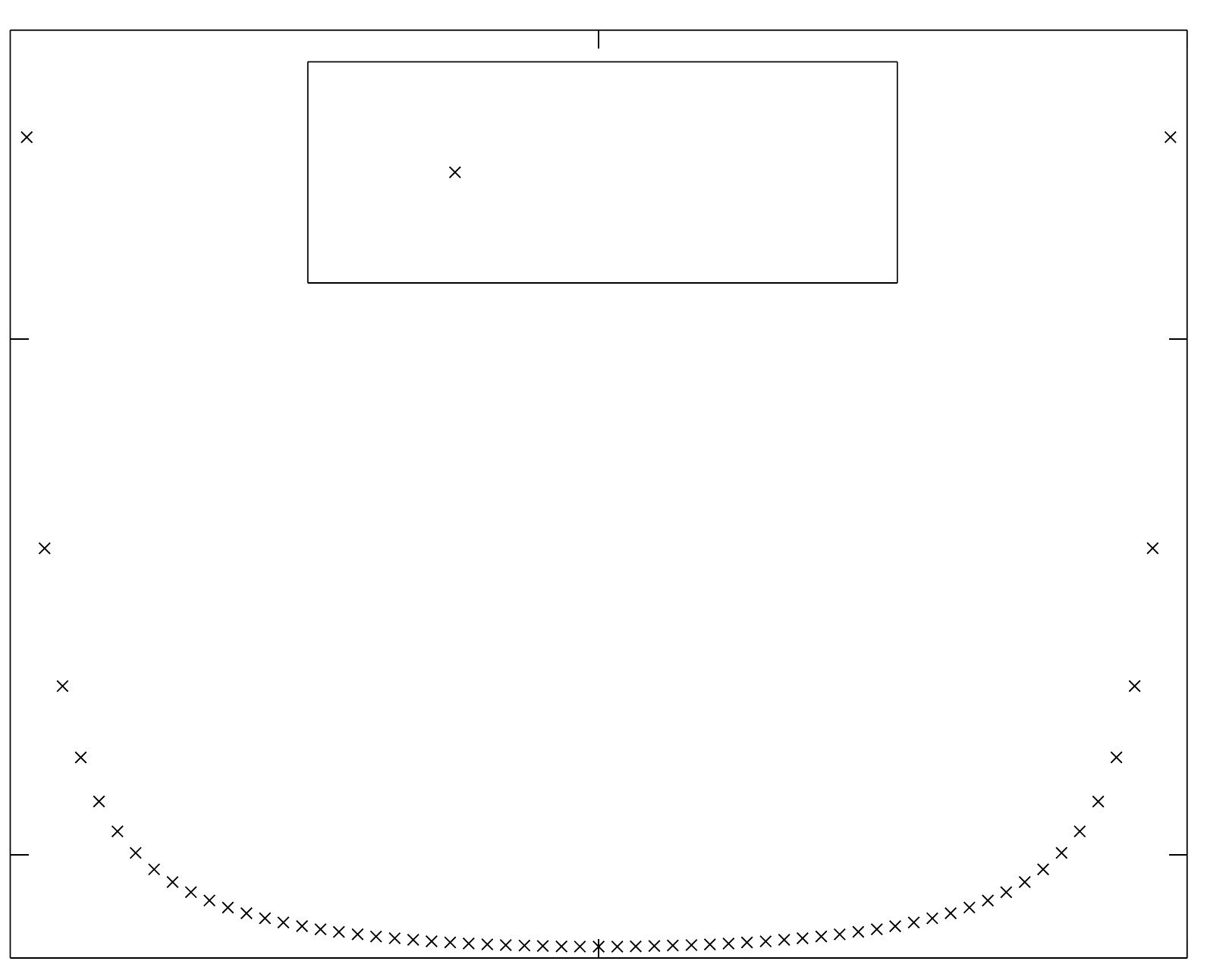}};
\draw (1.25,-2) node[below] {\LARGE $x$};
\draw (-2.5,0) node[left] {\LARGE $\rho$};
\draw (-2.49,-2) node[below] {$0$};
\draw (0,-2) node[below] {$0.5$};
\draw (2.49,-2) node[below] {$1$};
\draw (-2.5,0.63) node[left] {$1.05$};
\draw (-2.5,-1.56) node[left] {$1$};
\draw (0.3, 1.32) node {$\big( x(i), \rho(i) \big)_i$};
\end{tikzpicture}
\caption{Equilibrium configuration of $n+1 = 65$ particles $x(i)$ (i.e.~the solution to \eqref{eq:equilibrium}) for the potential $V(x) = x^{-2}$. The horizontal axis shows the domain $[0,1]$. The vertical axis measures the `discrete density' defined by $\rho(i) := 2/( n [ x(i+1) - x(i-1) ] )$.}
\label{fig:x:on:01}
\end{figure}

The formal asymptotic analysis leaves us with three questions associated with 
general case treated in \S\ref{sec:Formal1}: what is the
proper space in which to seek solutions $\chi$ to \eqref{intro:eqn:BL}; in this space, do unique solutions exist; and
if so, in what sense do solutions of \eqref{eq:equilibrium} converge to solutions of \eqref{intro:eqn:BL} as $n \to \infty$?
In \S4, we address these question by proving a $\Gamma$--convergence result. Here, the analysis rests upon a different 
strengthening of our basic assumptions, requiring
the stronger decay hypothesis\medskip
\begin{center}
\framed{0.7\textwidth}{\begin{description}[leftmargin=*]
\item[(Dec+)] $V(x),V'(x)\to0$ as $|x|\to\infty$, and
    there exists $a>\tfrac32$ and constants 
    $c_\delta$ such that for any $\delta>0$,
    \begin{equation*}
      V''(x)\leq c_\delta |x|^{-a-2}
      \qquad\text{for any }x\in\R{}\setminus(-\delta,\delta).
    \end{equation*}
\end{description}}\end{center}\medskip
Under both our basic assumptions and this extra assumption, we prove Theorem~\ref{thm:Gamma:conv}, demonstrating $\Gamma$--convergence  of the `renormalised' energy
\begin{equation} \label{intro:eq:ene:expansion}
  E_n^1(x) := n E_n(x) - \sum_{k=1}^n (n-k+1)V(k).
\end{equation}
This energy is renormalised in the sense that the subtracted
term need not be bounded as $n\to\infty$. To treat $E_n^1$ more easily and obtain a useful compactness result, we introduce a convenient change of variable, defining
\begin{equation} \label{eq:defn:eps}
  \epsilon(i) := n \big[x(i) - x(i-1)\big] - 1
  \qquad\text{ for }i=1,\ldots,n.
\end{equation}
The choice to use $\epsilon$ here as notation is due to the analogy
with the infinitesimal strain used in continuum mechanics, since
$\epsilon$ measures how far the particles deviate from being 
equispaced.

The result obtained in \S\ref{sec:Formal1}, given in \eqref{intro:eqn:BL}, suggests that $\epsilon(i) \approx \chi(i) - \chi(i-1) - 1 \to 0$ as $n \to \infty$ and $1 \ll i \ll n$. In fact, the compactness statement of Theorem~\ref{thm:Gamma:conv} states that boundedness of $E_n^1 (\epsilon)$ implies boundedness of $\epsilon$ in $\ell^2 (\N{})$, which will subsequently allow us to give a precise meaning to this statement.
Taking the $\Gamma$--limit as $n\to\infty$, $E_n^1$ splits into two independent, similar terms, one for each boundary layer; the term describing the left boundary layer is given by
\begin{equation*} 
  \begin{gathered}
  E^{l}_\infty : \bigaccv{ \epsilon \in \ell^2 (\N{}) }{ \epsilon(i) \geq -1 \: \: \forall \, i \geq 1 } \to \overline{\R{}}, \\
  E^{l}_\infty (\epsilon) 
  := \sum_{k=1}^\infty\sum_{j=0}^\infty \phi_k \bigg(\sum_{l = j+1}^{k+j} \epsilon(l)\bigg) + (\sigma^\infty, \epsilon )_{\ell^2(\N{})},
\end{gathered}  
\end{equation*}
where $\sigma^\infty \in \ell^2 (\N{})$ is given by $\sigma^\infty(i) := \sum_{k = i+1}^\infty (k-i) \bigabs{ V' (k) }$, and may be thought of as a stress induced on the boundary layer by the presence of the bulk. In Lemma \ref{lem:ex:and:uniq:Elinf} we prove existence and uniqueness of minimisers for $E^{l}_\infty$ in $\ell^2(\N{})$. Moreover, we show that the related infinite set of Euler-Lagrange equations is equivalent to solving 
\begin{equation} \label{intro:eqn:BL:ell2}
  \left\{ \begin{aligned}
    &0 = \sum_{\substack{k=0 \\ k \neq i}}^{\infty} V'\big[\chi(i) - \chi(k) \big], \quad i = 1, 2, 3, \ldots, \\
    &\bighaa{ \chi(i) - \chi(i-1) - 1 }_{i \in \N{}} \in \bigaccv{ \epsilon \in \ell^2 (\N{}) }{ \epsilon(i) \geq -1 \: \: \forall \, i \geq 1 }.
  \end{aligned} \right.
\end{equation}
This therefore provides us with a precise characterisation of solutions to \eqref{intro:eqn:BL}.

In \S\ref{sec:numerics}, we conclude by describing a numerical method for solving \eqref{intro:eqn:BL:ell2}. The numerical scheme approximates \eqref{intro:eqn:BL:ell2} by assuming that $\chi(i) = \chi(i-1) + 1$ for all $i$ larger than a fixed index. We compare its solution to the minimiser $x^n$ of \eqref{eq:equilibrium} for various numbers of $n$ particles and for two physically relevant choices for $V$: the case of dislocation dipoles (\cite{Hall2010}, $V(x) = x^{-2}$) and dislocation walls (\cite{GarronivanMeursPeletierScardia14ArXiv}, $V$ has a logarithmic singularity at $0$ and tails which vanish exponentially fast). We observe that the convergence rate of $n x^n (i)$ to $\chi(i)$ as $n \to \infty$ is close to $O(n^{-1})$, and independent of $i$. Furthermore, we find that the boundary layer profiles are qualitatively \emph{different} for the two choices of $V$, even though they satisfy all imposed conditions, and hence resemble the graph in Figure \ref{fig:y:on:01}.

\subsection{Discussion and conclusion}

The main fruits of our analysis are \eqref{intro:eqn:BL:ell2} and
\eqref{intro:eqn:decay}. Equation \eqref{intro:eqn:BL:ell2} is significant because it provides us with a
characterisation of the boundary layer behaviour in terms of an 
infinite system of discrete equations. In particular, \eqref{intro:eqn:BL:ell2} gives a precise meaning to the idea that the particles are `equispaced' in the bulk; by showing that \eqref{intro:eqn:BL:ell2} has a unique solution for $\chi$ where $\chi(i) - \chi(i-1) - 1 \in \ell^2(\N{})$, we place asymptotic limits on the extent to which the energy-minimising particle configuration can deviate from equal spacing. Moreover, the fact that we are able to obtain \eqref{intro:eqn:BL:ell2} from an asymptotic development of the ground state energy represents a significant theoretical advance for the treatment of discrete-scale boundary layers using $\Gamma$-convergence. In previous work, such as \cite{BraidesCicalese2007,ScardiaSchloemerkemperZanini2011,
Hudson2013,GarronivanMeursPeletierScardia14ArXiv}, either only  finite interaction ranges or continuum-scale boundary layers were considered.  Figure \ref{fig:y:on:01} illustrates the discrete-scale boundary layer in the case where $V(x) = |x|^{-2}$, showing that the solution to \eqref{intro:eqn:BL:ell2} provides a good asymptotic approximation to the solution of the full problem.  

Equation \eqref{intro:eqn:decay} is significant because it gives explicit form to the tail behaviour of the boundary layer solution in the case of a homogeneous potential. While \eqref{intro:eqn:decay} is only directly relevant in the case where $V(x) = |x|^{-a}$, it hints at why the analysis in \S\ref{sec:variational} relies on the assumption that $a > \frac{3}{2}$ in {\bf(Dec+)}. In the case where $1 < a \leq \frac{3}{2}$, \eqref{intro:eqn:decay} indicates that the associated $\eps(i) := \chi(i) - \chi(i-1) - 1$ is not in $\ell^2(\N{})$, and hence a new scaling and finer analysis will be necessary to recover the correct energetic description using $\Gamma$--convergence; in fact, we show in \S\ref{ssec:a:smaller:3over2} that taking the limit functional $E^{l}_\infty$ with a potential for which $a < 3/2$ leads to an
ill--posed variational problem. The derivation of \eqref{intro:eqn:decay} in \S\ref{sec:Formal2} does however suggest some of the tools necessary to extend our analysis: in the case where $1 < a \leq \frac{3}{2}$ it appears that the contributions from singular integral terms are important. We might therefore expect that \S\ref{sec:variational} can be extended to $ a \leq \frac{3}{2}$ by seeking a bulk correction to the total energy associated with nonlocal interactions between particles.

We now discuss our assumptions and findings as well as their implications in more detail. \smallskip

\begin{figure}[t]
\centering
\begin{tikzpicture}[scale=1.5]
\node (label) at (0,0){\includegraphics[width=3in]{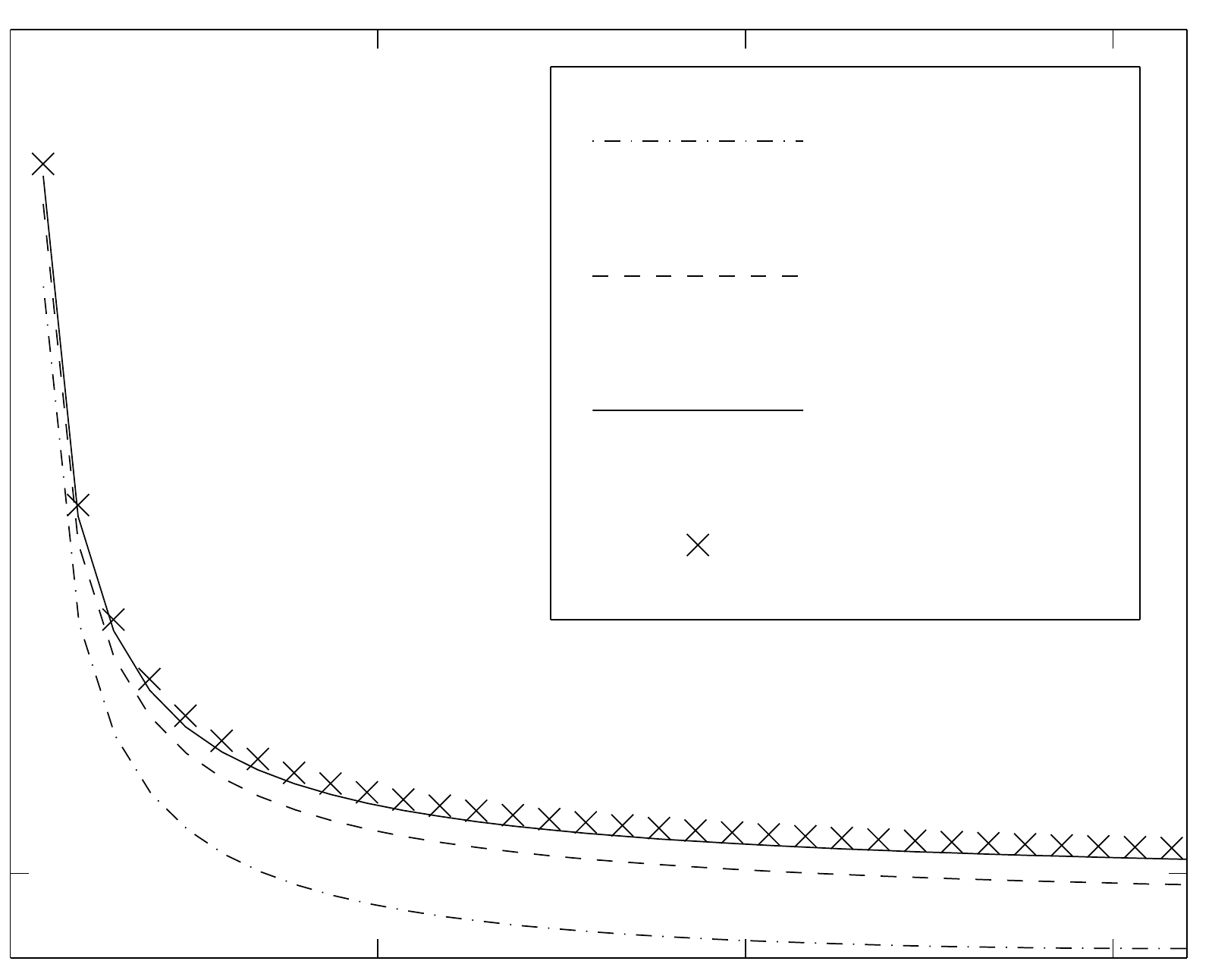}};
\draw (0,-2) node[below] {\LARGE $n x^n$};
\draw (-2.5,0) node[left] {\LARGE $\rho$};
\draw (-2.5, 1.95) node[left] {$1.1$};
\draw (-2.5,-1.6) node[left] {$1$};
\draw (-0.94,-2) node[below] {$10$};
\draw (-2.49,-2) node[below] {$0$};
\draw (0.6,-2) node[below] {$20$};
\draw (2.12,-2) node[below] {$30$};
\draw (1.1,1.5) node[right] {$n = 2^6$};
\draw (1.1,0.92) node[right] {$n = 2^8$};
\draw (1.1,0.35) node[right] {$n = 2^{10}$};
\draw (0.9,-0.25) node[right] {$\chi$ from \eqref{intro:eqn:BL:ell2}};
\end{tikzpicture}
\caption{The three line graphs depict the boundary layer of the minimiser $x^n$ of $E_n$ for $n = 2^6, 2^8, 2^{10}$ and $V(x) = x^{-2}$, in the rescaled coordinates $n x^n$. The graph of $n = 2^6$ corresponds to Figure \ref{fig:x:on:01}. The three line graphs indicate the convergence to the boundary-layer profile $\chi$ ($\times$) computed from \eqref{intro:eqn:BL:ell2}. See Figure \ref{fig:x:y:a2} for a further description of the graphs. }
\label{fig:y:on:01}
\end{figure}

\emph{Choice of scaling}. The particular choice of scaling made here, i.e.~taking the length of the domain to depend linearly on the number of particles, is the usual choice made when considering 
the thermodynamic limit of a system with a fixed number of 
particles per unit volume \cite{BraidesDalMasoGarroni1999,BLBL07}. In other physical situations, other scalings may be more appropriate, and may lead to different energetic descriptions; see for example \cite{Geers2013,GarronivanMeursPeletierScardia14ArXiv}.

\emph{Matched asymptotic analysis versus $\Gamma$--convergence}. In keeping
with other studies, we observe that the advantage of matched asymptotic analysis is the ease and flexibility of the arguments with which we obtain equation \eqref{intro:eqn:BL}: it requires a less detailed analysis than the $\Gamma$--convergence result, relying upon on a well--chosen ansatz for the asymptotic development of the solution. On the other hand, the main advantage of the $\Gamma$--convergence statement is that it implies well--posedness of \eqref{intro:eqn:BL:ell2}, and with it the development of the ground state energy for $E_n$.
However, the analysis relies on the minimiser being close to the equispaced configuration, which makes it harder to apply e.g.~when a constant external stress is applied to the particles.

\emph{Asymptotic equilibrium problem}. The infinite sum in \eqref{intro:eqn:BL} does \emph{not} simply correspond to replacing $n$ by $\infty$ in the force balance in \eqref{eq:equilibrium}. Instead, \eqref{intro:eqn:BL} has two elements; the first equation in \eqref{intro:eqn:BL} is obtained by rescaling \eqref{eq:equilibrium} for the boundary layer case where $i = O(1)$ as $n \to \infty$, while the second equation in \eqref{intro:eqn:BL} is associated with a matching condition between the boundary layer solution and the bulk solution.

\emph{Comparison with \cite{Hall2010}}. In \cite{Hall2010}, formal asymptotic methods were used to analyse the problem of a pile-up of repulsive particles against a single fixed obstacle, driven by a constant external force. In the present work, we consider a similar problem where the particles are trapped in between two fixed obstacles. However, our formal asymptotic analysis provides two significant extensions to the results in \cite{Hall2010}. In \S\ref{sec:Formal1}, we dispense with the assumption used in \cite{Hall2010} that $V$ is $-a$-homogeneous, and obtain a leading-order asymptotic solution for a general $V$. This is novel in the formal asymptotics literature on discrete problems. Then, in \S\ref{sec:Formal2}, we reintroduce the assumption that $V$ is $-a$-homogeneous, and extend the asymptotic analysis in \cite{Hall2010} to include many higher-order corrections. Using this method, we are able to obtain a more precise description of the matching condition between the bulk and the boundary layer. 

\emph{Comparison with \cite{GarronivanMeursPeletierScardia14ArXiv}}. The setting in \cite{GarronivanMeursPeletierScardia14ArXiv} corresponds to changing the choice of scaling for the domain from $[0,n]$ made here to $[0, c_n]$ for some $1 \ll c_n \ll n$. The observed boundary layer consists of $O (n / c_n)$ particles, and is therefore expected to be described by a \emph{continuous} profile in the many--particle limit. In this paper, $c_n = n$, and while we also find that the boundary layer consists of $O (n / c_n) = O(1)$ particles, this means instead that the boundary layer profile remains \emph{discrete} in the many--particle limit. We see the effect of these different scaling regimes reflected in the assumptions on $V$; while the analysis in \cite{GarronivanMeursPeletierScardia14ArXiv} relies on less regularity and a weaker notion than convexity of $V$, we weaken the assumption of finite first moments on the tails of $V$ (which is slightly stronger than $a \geq 2$).

\emph{Other forms of confinement}. A physically--interesting extension of our scenario is to consider the system subject to a constant external stress term which pushes the particles to one of the two barriers, in place of one or both of the rigid boundary constraints that we consider. Such a constraint results in a free--boundary problem: examples of such scenarios are examined in e.g.~\cite{Hall2010, Geers2013, VanMeursMunteanPeletier14}. While we expect our asymptotic analysis to apply with some modifications along the lines of \cite{Hall2010,Hall2011}, our $\Gamma$--convergence analysis would require us to find an appropriate variable to describe the free boundary, and then to obtain \emph{a priori} estimates in this variable, similar to those given in \S\ref{ssec:key:ests}. This appears to be a significant challenge, but with appropriate intuition from formal asymptotics, may be overcome in future.


\emph{Lennard-Jones interactions}. In contrast to our assumptions of purely
repulsive interactions between particles, a system with Lennard-Jones--type
interactions is subject to both repulsive and attractive forces. To
our knowledge, there are no results yet concerning the
analysis of boundary layers in such systems without assuming a 
finite interaction range, whereas our analysis considers the 
interactions between all pairs of particles. In view of 
previous results concerning boundary layers in such systems 
\cite{BraidesCicalese2007,ScardiaSchloemerkemperZanini2011,Hudson2013}, 
it does however seem natural that with modification, similar 
techniques to those which we use in the proof of
Theorem~\ref{thm:Gamma:conv} could carry over to a Lennard-Jones 
setting including all interactions between particles. We expect that the key 
challenge here is to obtain a suitable compactness result, similar to Theorem~\ref{thm:Gamma:conv}, for `fractured' states.
\smallskip

In conclusion, our analysis enables us to give a precise characterisation of the discrete boundary layers at either end of the domain, and treats long--range particle interactions without assuming a finite interaction neighbourhood.
We have obtained these results by bringing together both formal and rigorous asymptotic methods in order to deliver a unified picture of the various scales associated with the discrete boundary layer problem, and in so doing, we succeeded in going further than prior analyses using both techniques individually. The core achievement of our work is \eqref{intro:eqn:BL:ell2}, which gives valuable insights into how systems involving finitely many particles will deviate from the predictions given by a continuum analysis of bulk behaviour. Thus, our analysis gives a firm foundation to future work on understanding surface effects at equilibrium in higher dimensional problems. 


\smallskip

The remainder of the paper is organised as follows. In \S\ref{sec:Formal1} we perform the asymptotic analysis to derive the boundary layer equation \eqref{intro:eqn:BL} from the force balance \eqref{eq:equilibrium} in the general case. In \S\ref{sec:Formal2}, we obtain higher--order corrections in the specific case where $V(x)=|x|^{-a}$. In \S\ref{sec:variational} we establish $\Gamma$--convergence of the energy difference $E_n^1$, and show how it connects the force balance \eqref{eq:equilibrium} to the description of the boundary layer in \eqref{intro:eqn:BL:ell2}. Finally, in \S\ref{sec:numerics}, we give numerical examples validating our asymptotic development for two physically relevant choices of $V$.

\section{Formal asymptotic analysis -- Leading order analysis for $a > 1$}
\label{sec:Formal1}

\subsection{Notation and preliminaries}

We begin by using classical formal asymptotic methods 
analogous to those in \cite{Hall2010} to obtain the leading-order 
asymptotic solution to the system of algebraic equations given in 
\eqref{eq:equilibrium}. The novelty of our approach here is that
it relies only upon the basic assumptions on $V$ detailed in \S\ref{sec:Setting} and {\bf(Reg+)}, and not on an explicit choice of potential as in \cite{Hall2010}.

To clarify the notation which is used
throughout this and the following section, we include 
Table~\ref{tab:Definitions} for the reader's convenience: the notation is equivalent to that used in \cite{HinchPert}. 
\begin{table}[t]
\centering
\caption{Asymptotic notation. All definitions are interpreted in the limit $n \to \infty$. }\label{tab:Definitions}
\begin{tabular}{p{5cm}m{8cm}}
\toprule
Notation & Definition\\
\midrule
$f(n)=O(g(n))$ & $\exists \, C>0$ such that $|f(n)|\leq C |g(n)|$\\
$f(n)=\ord(g(n))$ & $\exists \, C>0$ such that $\limsup |f(n)|/|g(n)| = C$\\
$f(n)=o(g(n))$, $f(n)\ll g(n)$ & $|f(n)|/|g(n)|\to 0$\\
$f(n)\sim\sum_{i=1}^\infty g_i(n)$ & $\forall \, k\in\N{}$, $f(n)-\sum_{i=1}^k g_i(n) = O(g_{k+1}(n))$\\
$f(n)\sim\sum_{i=1}^N g_i(n)$ & \parbox[t]{8cm}{$\forall \, k=1,\ldots N-1$, $f(n)-\sum_{i=1}^k g_i(n) = O(g_{k+1}(n))$,\\
$f(n)-\sum_{i=1}^N g_i(n) = o(g_N(n))$}\\
\bottomrule
\end{tabular}
\end{table}


Supposing that $x(i;n)$ solves the system of equilibrium equations \eqref{eq:equilibrium} for a given
$n$, and following \cite{Hall2010}, we propose the following ansatz
for an approximate solution in the bulk:
\begin{equation}
 x(i;n) = \xi(in^{-1}; n), \label{eq:cont-ansatz}
\end{equation}
where $\xi(s;n)$ is expanded as an asymptotic power series
\begin{equation}
 \xi(s;n) \sim \xi_0(s) + n^{-b_1} \xi_1(s) + n^{-b_2} \xi_2(s) + \ldots, \label{eq:xi-expansion}
\end{equation}
with $b_i$ strictly increasing. For convenience, we simply write
$x(i)$ and $\xi(s)$ whenever possible, omitting their dependence 
upon $n$.

Following convention, we treat $\xi$ as though it were a function, even though the series definition of $\xi$ in \eqref{eq:xi-expansion} is an asymptotic series, and therefore may not converge for any fixed $s$ and $n$. Strictly speaking, equations involving $\xi$ should be interpreted as being true for fixed $s$ in the asymptotic limit as $n \to \infty$ where any instance of $\xi(s)$ is read as
\[
 \xi(s) = \sum_{k=0}^{Q} n^{-b_k} \xi_k(s) + O\big(n^{-b_{Q+1}}\big),
\]
for any choice of integer $Q$.

As we discuss in \S \ref{sec:Formal1-BLFail} and \S \ref{sec:Formal1-BL}, we encounter boundary layers when $s$ is sufficiently close to 0 or 1. As a result of these, we find that we will not be able to use the ansatz in \eqref{eq:cont-ansatz} and \eqref{eq:xi-expansion} to describe particle positions when $i$ is too close to 0 or $n$; instead, different ansatzes will be needed.
In our higher-order analysis in \S \ref{sec:Formal2}, we are careful to take account of the effects of the boundary layer from the beginning of our analysis, but in this section we begin by assuming that \eqref{eq:cont-ansatz} and \eqref{eq:xi-expansion} can be applied everywhere. While this is not strictly true (and would lead to contradictions if the analysis were extended to higher orders), identical results could be obtained by following the methods described in \S \ref{sec:Formal2-bulk}, where we use separate ansatzes for the bulk and the boundary layer from the outset.  


Due to the boundary conditions $x(0) = 0$ and $x(n) = 1$, we assume that $\xi(0) = 0$ and $\xi(1) = 1$. In practice, these boundary conditions will only be satisfied to leading order, so that we apply them as
\begin{equation}
 \label{eq:xi0-BCs}
 \xi_0(0) = 0\qquad\text{and}\qquad \xi_0(1) = 1.
\end{equation}

Additionally, we assume that $\xi(s)$ has the following smoothness and monotonicity properties:
\begin{description} 
  \item[($\xi$-Smooth)] $\xi \in C^4([0,1])$; 
  \item[($\xi$-Mon)] $\xi'$ is strictly positive, i.e.~there exists $M > 0$ such that $\xi'(s) \geq M$ for all $s \in [0,1]$.
\end{description}

The monotonicity assumption {\bf($\xi$-Mon)} implies that 
\begin{equation*}
  \chi(i+1)-\chi(i) = n[x(i+1)-x(i)] = n\int_{i/n}^{(i+1)/n} \xi'(s) \, \mathrm{d}s \geq M.
\end{equation*}
That is, we assume that at equilibrium no two particles are closer than $M/n$, 
uniformly in $n$, and thus the particle density 
is uniformly bounded. This is a
natural assumption as long as the long-range interactions between particles are not strong 
enough to make very high densities of particles favourable as 
$n\to\infty$.

Since $\xi$ is a continuous object, it is natural to cast the discrete force balance in \eqref{eq:equilibrium} in a continuous form as well. By separating the interactions with the neighbours on the left from those on the right, we rewrite \eqref{eq:equilibrium} as 
\begin{gather} \label{eq:F(s)-Defn-Formal1}
 F(s) = 0, \quad \text{for all } s = \frac1n, \ldots, \frac{n-1}n, \\
 F(s) := \sum_{k=1}^{\lfloor n- sn \rfloor} 
 V'\big(n \big[\xi(s+kn^{-1}) - \xi(s)\big] \big)
 - \sum_{k=1}^{\lfloor sn \rfloor} 
 V'\big(n \big[\xi(s) - \xi(s-kn^{-1})\big] \big). \label{eq:F(s)-Defn-Formal1:Q}
\end{gather}
In Section \ref{sec:Formal1-Bulk}, we manipulate this definition of $F(s)$ in order to obtain the leading-order dependence of $F$ on $\xi$ as $n \to \infty$. From this, we can obtain an equation for $\xi_0(s)$ and hence an asymptotic expression for the equilibrium particle locations in the bulk. 

\subsection{Asymptotic analysis using the bulk ansatz}
\label{sec:Formal1-Bulk}

In the analysis below, we show that $F(s)$ is given asymptotically by
\begin{equation}
 F(s) = n^{-1} \xi''(s) \sum_{k=1}^{\infty} \big( V''\big[\xi'(s) k] k^2 \big) + o(n^{-1}),
 \label{eq:Qasymp}
\end{equation}
so long as $s \gg n^{-\frac{1}{a}}$ and $1 - s \gg n^{-\frac{1}{a}}$. To this end, we introduce an arbitrary integer $H$ where $n^{\frac{1}{a}} \ll H\ll n$, and split the sums in \eqref{eq:F(s)-Defn-Formal1:Q} as follows:
\begin{multline} \label{eq:Qasymp:expanded}
 F(s) = \underbrace{\sum_{k=1}^{H} \Big[ 
 V'\big(n \big[\xi(s+kn^{-1}) - \xi(s)\big] \big)
 - V'\big(n \big[\xi(s) - \xi(s-kn^{-1})\big] \big) \Big]}_{=: S_1} \\
 + \underbrace{\sum_{k=1}^{\lfloor n- sn \rfloor - H} 
 V'\big(n \big[\xi(s+Hn^{-1}+kn^{-1}) - \xi(s)\big] \big)}_{=: S_2}
 \\- \underbrace{\sum_{k=1}^{\lfloor sn \rfloor - H} 
 V'\big(n \big[\xi(s)  - \xi(s- Hn^{-1}-kn^{-1})\big] \big)}_{=: S_3}.
\end{multline}
We observe from {\bf($\xi$-Mon)} that the argument of $V'$ in the sums $S_2$ and $S_3$ is bounded from below by $M(H + k)$. Then, by {\bf($a$-Dec)}, the summands in $S_2$ and $S_3$ are bounded in absolute value by $c_\delta' M^{-a-1} (H + k)^{-a-1}$. Therefore,
\begin{equation*}
  |S_2| + |S_3| 
  \leq 2 \sum_{k=1}^\infty c_\delta' M^{-a-1} (H + k)^{-a-1}
  \lesssim H^{-a} = o (n^{-1}).
\end{equation*}

To expand $S_1$ in terms of $n$, we repeatedly employ Taylor's
theorem. More precisely, using the regularity of
$V$ and $\xi$ as given by {\bf(Reg+)} and {\bf($\xi$-Smooth)}, we write
\begin{gather*}
 V'(x+\delta) = V'(x) + V''(x)\delta  + 
 \tfrac12V'''(x+\theta_\delta \delta)\delta^2, \\
 \xi(s+\delta) = \xi(s) + \xi'(s) \delta + \tfrac12 \xi''(s) 
 \delta^2 + \tfrac{1}{6}\xi'''(s+\rho_\delta \delta)\delta^3,
\end{gather*}
for some $\theta_\delta,\rho_\delta\in[0,1]$. Moreover, 
{\bf(Reg+)} implies that
\begin{equation*}
  V'''(x+\theta_\delta \delta)\delta^2 = \delta^2 \cdot O(x^{-a-3})\quad
  \text{as }x\to\infty,
\end{equation*}
as long as $\delta \ll x$.

We now apply the Taylor expansion of $\xi$ to the arguments of $V'$ in $S_1$. By the
uniform continuity of $\xi'''$, we obtain
\begin{align*}
 n \big[\xi(s \pm kn^{-1}) - \xi(s)\big] 
 &= n\big[ \pm \xi'(s) kn^{-1} + \tfrac{ 1 }{2}\xi''(s) k^2 n^{-2}+O\big(k^3n^{-3}\big)\big],\\
 &= \pm k\xi'(s) +k\Big[\tfrac{1}{2}\xi''(s)kn^{-1}
   +O\big(k^2n^{-2}\big)\Big]
\end{align*}
as long as $k\ll n$. Moreover, since $H \ll n$, we see that $k n^{-1}\leq Hn^{-1}\ll 1$ throughout the sum given in $S_1$. Applying the Taylor expansion of $V$
and using the oddness of $V'$ and evenness of $V''$, we find
that
\begin{multline} \label{for:S1:expanded}
 S_1 =
 \sum_{k=1}^{H} \bigg[
   V''\big[k\xi'(s) \big]\xi''(s)k^2 n^{-1}
   + V''\big[ k \xi'(s) \big]k \cdot O(k^2 n^{-2})
 \\+ V'''\big[k\xi'(s)+\theta_k k \cdot O(k n^{-1})\big]
   k^2 \cdot O\big(k^2n^{-2}\big)
 \\+ V'''\big[-k\xi'(s)-\theta_{-k} k \cdot O(k n^{-1})\big]k^2  \cdot
   O\big(k^2n^{-2}\big)\bigg].
\end{multline}

We now show that the sum of the last three of the four terms in the summand of \eqref{for:S1:expanded} is $o(n^{-1})$. To treat the second term, we note that 
$V''[\xi'(s) k] = O(k^{-a-2})$ as $kn^{-1} \to 0$ and
$k \to \infty$, and hence
\[
  V''\big[\xi'(s) k \big] k \cdot O(k^2n^{-2}) = O(k^{1-a}n^{-2}).
\]
We sum to find that
\begin{equation}
  \sum_{k=1}^{H} V''\big[\xi'(s) k \big] k \cdot O(k^2n^{-2})
    = \begin{cases}
    O\big(H^{2-a}n^{-2}\big) & a\in(1,2),\\
    O\big(\!\log(H) n^{-2}\big) &a=2,\\
     O(n^{-2}) & a>2.
  \end{cases}
  \label{boundingerror:asymp:temp}
\end{equation}
In all three cases, the fact that $H \gg n^{\frac{1}{a}}$ implies that the sum in \eqref{boundingerror:asymp:temp} is $o(n^{-1})$.

To bound the third and fourth term in the summand of \eqref{for:S1:expanded}, we first observe that
\begin{equation*}
  V'''\big[\pm k\xi'(s) \pm \theta_{\pm k} k \cdot O(kn^{-1})\big]
  = O(k^{-3-a})
\end{equation*}
as long as $k n^{-1}\ll 1$. Hence
\[
  V'''\big[\pm k\xi'(s) \pm \theta_{\pm k} k\cdot O(kn^{-1})\big]k^2 \cdot O(k^2 n^{-2})= 
  O(k^{1-a}n^{-2}),
\]
and an analogous argument to that used to obtain \eqref{boundingerror:asymp:temp} implies
\[
S_1 =
  n^{-1} \sum_{k=1}^{H} \Big[ 
 \xi''(s) V''\big[\xi'(s) k \big] k^2 \Big] + o(n^{-1}).
\]

Finally, to obtain \eqref{eq:Qasymp}, we use $V''\big[\xi'(s) k \big] k^2 = O(k^{-a})$ to deduce that
\[
 \sum_{k=H+1}^{\infty} \Big[ 
 \xi''(s) V''\big[\xi'(s) k \big] k^2 \Big] = O(H^{1-a}).
\]
Since $H \gg 1$, adding this term does not change \eqref{eq:Qasymp:expanded} at leading order, and we conclude that \eqref{eq:Qasymp} holds.

Next, we investigate the implications of \eqref{eq:F(s)-Defn-Formal1} and \eqref{eq:Qasymp} for our ansatz in \eqref{eq:cont-ansatz}. Since $\xi'$ is assumed to be bounded, the property {\bf(Cvx)} implies that the sum in \eqref{eq:Qasymp} is bounded from below by a positive constant. We conclude that the leading order density satisfies $\xi_0''(s) = 0$ whenever $s$ is in an appropriate range. As argued in \cite{Hall2010}, we apply the boundary conditions $\xi_0(0) = 0$ and $\xi_0(1) = 1$ to $\xi_0''(s) = 0$, because possible boundary layers can only affect higher order corrections to the particle positions in the bulk. Consequently,
\begin{equation*}
 \xi_0(s) = s.
\end{equation*} 

\subsection{Investigating a continuum rescaling for the boundary layer}
\label{sec:Formal1-BLFail}

Our derivation of \eqref{eq:Qasymp} relies on the assumption that $H \ll sn \ll n - H$ for some $H$ with $n^\frac{1}{a} \ll H$. Hence, we cannot be confident that $\xi(s) \sim s$ is a valid leading-order approximation of the particle positions when $s = O(n^{-\frac{a-1}{a}})$ or when $s = 1 - O(n^{-\frac{a-1}{a}})$. To investigate these regimes fully, a new ansatz is therefore required.

When $i$ or $n - i$ are sufficiently small, we may expect to see boundary layers where the original bulk scalings no longer apply; for an example of boundary layer analysis for a similar system, see \cite{Hall2010}. The fact that the bulk ansatz is applicable when $i \gg n^{\frac{1}{a}}$ and  $n - i \gg n^{\frac{1}{a}}$ indicates that the boundary layer width can be no greater than $\ord(n^\frac{1}{a})$.


To find the boundary layer rescaling, we follow a similar procedure to that used for classical problems from differential equations. We propose a new scaling of the variables, and analyse it to determine whether it yields a `distinguished limit' where there is a new dominant balance between terms. Because of the symmetric geometry of the particle system, we concentrate on the boundary layer in the vicinity of $s = 0$. We begin by considering a continuum ansatz, which we assume to be valid when $i = \ord(n^\beta)$ for some $0 < \beta \leq \frac{1}{a}$, and which takes the form
\begin{equation}
 x(i) = n^{\beta-1} \hat{\xi}(in^{-\beta};n), \label{eq:x(i)-ContRescaling}
\end{equation}
where $\hat{\xi}(\hat{s};n)$ is again a continuum ansatz with the smoothness and monotonicity properties described before:
\begin{description} 
  \item[($\hat{\xi}$-Smooth)] $\hat{\xi} \in C^3[0,\infty)$;
  \item[($\hat{\xi}$-Mon)] $\hat{\xi}'$ is positive and uniformly bounded away from zero, so that $\hat{\xi}'(\hat{s}) \geq \hat{M} > 0$ for some constant $\hat{M}$.
\end{description}

The choice of scaling in \eqref{eq:x(i)-ContRescaling} is based on two observations. Firstly, the fact that we propose a new ansatz that is valid when $i = \ord(n^\beta)$ means that $\hat{\xi}$ must be a function of $\hat{s} := i n^{-\beta}$, which is $\ord(1)$ when $i = \ord(n^\beta)$. Secondly, from $\xi(s) \sim s$ as $s \rightarrow 0$, we obtain that $x(i) \sim i n^{-1}$ as $i$ decreases out of the region where the bulk ansatz is valid. By the principles that underly the method of matched asymptotic expansions, this must be identical to the behaviour of the rescaled $x(i)$ in \eqref{eq:x(i)-ContRescaling} as $i$ increases out of the region where this boundary layer ansatz is valid. By scaling $x(i)$ with $n^\beta$ in \eqref{eq:x(i)-ContRescaling}, we can satisfy this requirement by imposing the following matching condition on the leading order solution to $\hat{\xi}(\hat{s})$, based on Van Dyke's matching principle:
\begin{equation*}
 \hat{\xi}_0(\hat{s}) \sim \hat{s}, \quad \text{as }\hat{s} \rightarrow \infty.
\end{equation*} 

Now, we proceed by considering the case where $i = \ord(n^\beta)$ and so $\hat{s} = \ord(1)$, and we define $\hat{F}(\hat{s})$ in a similar manner to \eqref{eq:F(s)-Defn-Formal1} as follows:
\begin{multline}
 \hat{F}(\hat{s}) := 
 \sum_{k=1}^{\lfloor K- sn \rfloor} 
 V'\big(n^{\beta} \big[\hat{\xi}(\hat{s}+kn^{-\beta}) - \hat{\xi}(\hat{s})\big] \big)
 - \sum_{k=1}^{\lfloor sn \rfloor} 
 V'\big(n^{\beta} \big[\hat{\xi}(\hat{s}) - \hat{\xi}(\hat{s}-kn^{-\beta})\big] \big) \\
 + \sum_{k=K+1}^{n}
 V'\big(n\big[\xi(0+kn^{-1}) - n^{\beta-1}\hat{\xi}(\hat{s})\big] \big),
 \label{eq:tildeQ-Intro}
\end{multline}
where $K$ is chosen so that $n^\beta \ll K \ll n$, so that $K$ lies in the intermediate region between the two scaling regimes.

We start by showing that the third term in \eqref{eq:tildeQ-Intro} is small. Using {\bf($a$-Dec)} and {\bf($\hat{\xi}$-Mon)}, we quickly see that
\[
 V'\big(n \big[\xi(0+kn^{-1}) -  n^{\beta-1}\hat{\xi}(\hat{s})\big] \big) = O(K^{-a-1}),
\]
which when summed, gives
\[
\sum_{k=K+1}^n V'\big(n\big[\xi(0+kn^{-1}\big)-
n^{\beta-1}\hat{\xi}(\hat{s})\big]\big)=O(K^{-a})= 
o(n^{-a \beta}).
\]
Moreover, we can manipulate the first two sums in \eqref{eq:tildeQ-Intro} as we did in Section \ref{sec:Formal1-Bulk} by introducing $\hat{H}$ where $n^{\frac{\beta}{a}} \ll \hat{H}\ll n^{\beta}$, so that
\begin{multline*}
 \hat{F}(\hat{s}) = \sum_{k=1}^{\hat{H}} \Big[ 
 V'\big(n^\beta \big[\hat{\xi}(\hat{s}+kn^{-\beta}) - \hat{\xi}(\hat{s})\big] \big)
 - V'\big(n^\beta \big[\hat{\xi}(\hat{s}) - \hat{\xi}(\hat{s}-kn^{-\beta})\big] \big) \Big] \\
 + \sum_{k=1}^{\lfloor K- \hat{s}n \rfloor - \hat{H}} 
 V'\big(n^\beta \big[\hat{\xi}(\hat{s}+\hat{H}n^{-\beta}+kn^{-\beta}) - \hat{\xi}(\hat{s})\big] \big) \\
 - \sum_{k=1}^{\lfloor \hat{s}n \rfloor - \hat{H}} 
 V'\big(n^\beta \big[\hat{\xi}(\hat{s}) - \hat{\xi}(\hat{s}- \hat{H}n^{-\beta} -kn^{-\beta})\big] \big) 
 + o(n^{-\beta}).
\end{multline*}

With $\hat{F}(\hat{s})$ in this form, we can repeat the computations in Section \ref{sec:Formal1-Bulk}, ultimately obtaining the result that
\begin{equation*}
 \hat{F}(\hat{s}) = n^{-\beta} \hat{\xi}''(\hat{s}) \sum_{k=1}^{\infty} \big( V''\big[\hat{\xi}'(\hat{s}) k] k^2 \big) + o(n^{-\beta}),
\end{equation*}
which is similar to \eqref{eq:Qasymp}. Since $\hat{F}(\hat{s}) = 0$ whenever $\hat{s} = in^{-\beta}$ for $i = \ord(n^\beta)$, we again obtain that $\hat{\xi}''(\hat{s}) = 0$. Hence, the leading order behaviour of $\hat{\xi}$ in the proposed boundary layer is identical to the leading order behaviour of $\xi$. Since there is no qualitative difference between the equation to be solved in the bulk and the equation to be solved in the boundary layer, we conclude that this is not a distinguished limit of the system, and 
thus the only possible boundary layer in our system is the discrete boundary layer that could occur when $i = \ord(1)$.

\subsection{Discrete boundary layer scaling}
\label{sec:Formal1-BL}

Having established that there can be no boundary layers associated with a continuum rescaling, we propose the discrete boundary layer ansatz, $nx(i;n) = \chi(i;n)$, where $\chi(i;n)$ is expanded as an asymptotic series in powers of $n$. As before, we omit the explicit dependence on $n$ unless we wish to emphasise that $\chi(i)$ is an asymptotic series. Since $\chi(i)$ only takes integer arguments, we do not make any smoothness assumptions about $\chi(i)$. To preserve the ordering of the particles, we require that $\chi(i)$ is strictly increasing.

Using Van Dyke's matching principle, we find that the leading order behaviour of $\chi(i)$ for large $i$ must be given by
\[
  \chi_0(i) \sim i \quad \text{as } i \to \infty.
\]
More specifically, we can use Van Dyke's matching principle to match between $\xi'(s)$ and $\chi(i) - \chi(i-1)$, which gives the following, stronger condition on $\chi_0(i)$ as $i \to \infty$:
\begin{equation}
 \chi_0(i) - \chi_0(i-1) \sim 1 \quad \text{as } i \to \infty.
\label{eq:chi0-MatchingCond}
\end{equation}

Let $i = \ord(1)$ and let $K$ be chosen so that $1 \ll K \ll  n$. Similar to \eqref{eq:F(s)-Defn-Formal1} and \eqref{eq:F(s)-Defn-Formal1:Q}, we write the force balance equation for the $i$th particle as:
\begin{equation}
 0 = \sum_{\substack{k=0 \\ k \neq i}}^{K} V'\big[\chi(i) - \chi(k) \big] - \sum_{k=K+1}^{n} V'\big(n \big[\xi(kn^{-1}) - n^{-1} \chi(i) \big] \big). 
 \label{eq:BL1-ForceBalance-LeadingTemp}
\end{equation}
By the same method as discussed in Section \ref{sec:Formal1-BLFail}, we note that
\[
 \sum_{k=K+1}^{n} V'\big(n \big[\xi(kn^{-1}) - n^{-1} \chi(i) \big] \big) = O(K^{-a}) = o(1),
\]
while the first sum in \eqref{eq:BL1-ForceBalance-LeadingTemp} has no explicit dependence on $n$ except through the fact that $\chi$ is an asymptotic series. 

Since $\chi_0(i) \sim i$ as $i \to \infty$, we note that the first sum in \eqref{eq:BL1-ForceBalance-LeadingTemp} must be finite as $K \rightarrow \infty$. Hence, we can take $n$ and $K$ to $\infty$ in \eqref{eq:BL1-ForceBalance-LeadingTemp} to obtain the leading order equation
\begin{equation*}
 0 = \sum_{\substack{k=0 \\ k \neq i}}^{\infty} V'\big[\chi_0(i) - \chi_0(k) \big], \quad i = 1, 2, 3, \ldots,
\end{equation*}
which must be solved subject to the matching condition in \eqref{eq:chi0-MatchingCond}. Note that this system of discrete equations generalises those of \cite{Hall2010} to a wide class of potentials $V$.

\section{Formal asymptotic analysis -- Higher order analysis for $V(x) = |x|^{-a}$}
\label{sec:Formal2}

\subsection{Summary of results} 

In this section, we use formal asymptotic analysis to determine higher order corrections to the particle positions in the specific case where the potential is given by $V(x) = |x|^{-a}$ for any $a > 1$. As in \S \ref{sec:Formal1}, we obtain our solutions by assuming a continuum ansatz for the particle positions in the bulk of the of the domain and a discrete ansatz in the boundary layers at the ends of the domain. Using the method of matched asymptotic expansions, we obtain asymptotic solutions to the particle positions up to $o(n^{-(a-1)})$ in both the bulk problem and the rescaled boundary layer problem.

To obtain equations for the particle positions in the bulk, we draw on the results from \cite{Sidi2012} to use Euler--Maclaurin summation to express the total force on any particle as the sum of a `local contribution' involving the particle density at that point, and a singular integral that represents the effect of long-range interactions between particles. At leading order, the particle density is governed by a simple differential equation as in \S \ref{sec:Formal1}. Only at higher orders does the nonlocal effect of the long-range interactions on the bulk behaviour become significant, appearing through a singular integral term. 


As previously, we find that the particle positions in the boundary layers are governed by different equations from the particle positions in the bulk. In order to analyse this, we apply the method of matched asymptotic expansions, using the techniques of intermediate matching (see, for example, \cite{HinchPert}). This enables us to exploit the existence of an `intermediate scaling regime', where the bulk ansatz and the boundary layer ansatz give equivalent results, in order to determine the sizes of the asymptotic correction terms and the appropriate matching conditions that relate the bulk solution to the boundary layer solution.

We find that the particle locations in the bulk region are given by $x(i;n) \sim \xi(in^{-1};n)$, where the asymptotic expansion of $\xi(s;n)$ takes different forms depending on the value of $a$. In the case where $1 < a < 2$, we find that $\xi(s;n)$ takes the form
\begin{equation*}
 \xi(s;n) = s + \frac{s^{-(a-2)} - (1-s)^{-(a-2)} + 1 - 2s}{\zeta(a) (a-2) (a^3 - a)} \, \frac{1}{n^{a-1}} +  o\left(\frac{1}{n^{a-1}}\right);
\end{equation*} 
in the case where $a = 2$, we find that $\xi(s;n)$ takes the form
\begin{equation*}
 \xi(s;n) = s + \frac{2s - 1}{\pi^2} \, \frac{\log n}{n} + \left[ \frac{\log(1-s) - \log(s)}{\pi^2} + (s - \tfrac{1}{2}) \tilde{p} \right] \frac{1}{n} + o\left(\frac{1}{n}\right); 
\end{equation*} 
and in the case where $a > 2$, we find that
\begin{equation*}
 \xi(s;n) = s + \sum_{k=1}^{\ceil{a-2}} ( s - \tfrac12)p_k \frac1{n^{k}}
 + \left[\frac{s^{-(a-2)} - (1-s)^{-(a-2)}}{\zeta(a) (a-2)(a^3-a)} +  (s - \tfrac12)\tilde{p}\right] \frac1{n^{a-1}} + o \Bighaa{ \frac1{n^{a-1}} },
\end{equation*}
where $\zeta(s)$ is the Riemann zeta function, and where $p_k$ and $\tilde{p}$ are constants. The constants $p_k$ and $\tilde{p}$ are chosen so that the bulk solution and the boundary layer solution are equivalent in some overlap region. As such, these constants depend on the solutions of the boundary layer problems, and can be defined iteratively as we discuss below.

In the boundary layer, we find that the particle locations are given by $x(i) = n^{-1} \chi(i;n)$, where the expansion of $\chi(i;n)$ takes the form
\begin{equation*}
 \chi(i;n) = 
 \begin{cases} \displaystyle
    \sum_{j=0}^{\ceil{a-2}} n^{-j} \chi_j(i) + n^{-(a-1)} \tilde{\chi}(i) + o(n^{-(a-1)}), & a \neq 2; \\
    \chi_0(i) + n^{-1} \log n \tilde{\chi}^*(i) + n^{-1} \tilde{\chi}(i) + o(n^{-1}), & a = 2.
 \end{cases}
\end{equation*} 
The sequences $\chi_j$, $\tilde{\chi}$ and $\tilde{\chi}^*$ are found by solving infinite systems. We leave their precise description to \S \ref{sec:Formal2-BL}.
The infinite system satisfied by $\chi_j(i)$ is linear for $j \geq 1$, and nonlinear for $j = 0$. These infinite systems must be solved subject to a matching condition with the bulk. For example, $\chi_0(i)$ must satisfy the matching condition given by $ \chi_0(i) - \chi_0(i-1) \rightarrow 1$ as $i \rightarrow \infty$. In the case where $a > 2$, we then obtain $p_1$ from the limit
\begin{equation*}
 p_1 = 2\lim_{i \rightarrow \infty} \left[i - \chi_0(i) \right],
\end{equation*}
and we find that $\chi_1(i)$ must satisfy the matching condition $ \chi_1(i) - \chi_1(i-1) \rightarrow p_1$. If $a > 3$, we can then define $p_2$ by the limit
\begin{equation*}
 p_2 = 2\lim_{i \rightarrow \infty} \left[p_1 i - \chi_1(i) \right],
\end{equation*}
and we find that $\chi_2(i)$ must satisfy the matching condition $ \chi_2(i) - \chi_2(i-1) \rightarrow p_2$. This iterative process can be continued until the $O(n^{-(a-1)})$ terms---or, if $a = 2$, $O(n^{-1} \log n)$ terms---are reached. At this stage, we can again use the same process to find the matching conditions for $\tilde{\chi}$ and $\tilde{\chi}^*$. If $a = 2$, we find that $\tilde{\chi}^*$ satisfies $\tilde{\chi}^*(i) - \tilde{\chi}^*(i-1) \rightarrow \frac{2}{\pi^2}$, while for all $a > 1$ we find that $\tilde{\chi}$ satisfies $\tilde{\chi}(i) - \tilde{\chi}(i-1) \rightarrow \tilde{p}$, where $\tilde{p}$ is given by 
\begin{equation*}
 \tilde{p} = 
 \begin{cases}
  -\frac{2}{\zeta(a) (a-2) (a^3 - a)}, & a \not \in \mathbb{N}; \\
  2 \lim_{i \rightarrow \infty}  \left[ i - \chi_{0}(i) \right], & a = 2; \\
  2 \lim_{i \rightarrow \infty}  \left[ p_{a-1}i - \chi_{a-1}(i) \right] -\frac{2}{\zeta(a) (a-2) (a^3 - a)}, & a = 3,\,4,\,\ldots.
 \end{cases}
\end{equation*}

One valuable feature of the matched asymptotic analysis is that it gives us more precise details of the decay rates of the discrete solutions, $\chi_k(i)$, than could be obtained for $\chi_0(i)$ using leading order analysis. In particular, we find that the asymptotic behaviour of $\chi_0(i)$ for large $i$ is given by
\begin{equation} \label{for:chi0:result}
 \chi_0(i) = 
 \begin{cases}
  i + \frac{i^{-(a-2)}}{\zeta(a) (a-2) (a^3 - a)} + o\left( i^{-(a-2)} \right), & a < 2; \\
  i - \frac{\log i}{\pi^2} - \frac{\tilde{p}}{2} + o(1), & a = 2; \\
  i - \frac{p_1}{2} + \frac{i^{-(a-2)}}{\zeta(a) (a-2) (a^3 - a)} + o\left( i^{-(a-2)} \right), & a > 2.
 \end{cases}
\end{equation}
It follows in all cases that
\begin{equation}
 \label{eq:chi0Diff-FinalResult}
 \chi_0(i) - \chi_0(i-1) = 1 - \frac{i^{-(a-1)}}{\zeta(a)(a^3 - a)} + o\big(i^{-(a-1)}\big),
 \qquad a > 1,
\end{equation}
where the scaling of the error term is implied by the postulated differentiability of $\xi$.

Similarly, we can obtain bounds on the decay rates of the higher order corrections, $\chi_j(i)$, in the case where $0 < j < a-1$. These take the form
\begin{equation*}
 \chi_j(i) = 
 \begin{cases}
    p_{j} i - \tfrac{p_{j+1}}{2} + O\big(i^{-(a-2-j)}\big), & 0 < j < a-2; \\
    p_{j} i - \left(\frac{1}{\zeta(a) (a-2) (a^3 - a)} + \frac{\tilde{p}}{2} \right) + o(1), & j = a-2; \\
    p_{j} i + O\big(i^{-(a-2-j)}\big), & a-2 < j < a-1.
 \end{cases}
\end{equation*}

Even higher order corrections to the solution (both in the bulk and in the boundary layer) could potentially be obtained by applying the same asymptotic techniques. However, this would involve addressing the direct influence of the boundary layer on the bulk, leading to significant mathematical complications without leading to greater insights into the behaviour of the solution. Throughout this section, we discuss how higher order corrections might be obtained, but we do not pursue any high-order analysis in detail. 

Additionally, the results obtained in this section for $\xi$ and $\chi$ up to $\ord(n^{-(a-1)})$ can be extended to a larger class of potentials $V$ which satisfy, in addition to {\bf(Reg)}, {\bf(Sing)}, and {\bf(Cvx)}, that
\begin{equation}
 \label{eq:GenPotential-Derivs}
 V^{(r)}(x) \sim (-1)^r a \cdot (a+1) \cdots (a+r-1) x^{-a-r}, 
 \quad \text{as } x \rightarrow \infty,
 \quad \text{for } r = 0, \ldots, 2\floor{\tfrac{a+3}{2}}.
\end{equation}
In this general case, we find that many results hold with minor modifications. For example, we find that \eqref{eq:chi0Diff-FinalResult} generalises to
\begin{equation} \label{intro:eqn:decay2}
 \chi_0(i) - \chi_0(i-1) = 1 - \frac{i^{-(a-1)}}{Z(V)(a - 1)} + o\big(i^{-(a-1)}\big),  
\end{equation}
where $Z(V)$ is defined by
\[
 Z(V) := \sum_{k=1}^{\infty} V''(k) k^2.
\]

At the end of each subsection, we outline how the argument for $-a$-homogeneous $V$ extends to those potentials that satisfy \eqref{eq:GenPotential-Derivs}. For clarity of the arguments, however, we only present detailed results for $V(x) = |x|^{-a}$.

\subsection{Asymptotic analysis using the bulk ansatz}
\label{sec:Formal2-bulk}

We obtain asymptotic solutions for $x(i)$ in both the bulk and boundary layer regimes using the method of matched asymptotic expansions. The method that we use involves matching with an intermediate variable, and is analogous to the methods used in \cite{Hall2010,Voskoboinikov2009}. Whereas \cite{Hall2010,Voskoboinikov2009} concentrate on leading-order matching, we use the method to obtain higher order corrections. We begin by introducing a continuum bulk ansatz, $x(i;n) = \xi(in^{-1};n)$, which we assume to be valid when $i \gg 1$ and $n-i \gg 1$. At the same time, we introduce a discrete boundary layer ansatz, $x(i;n) = n^{-1} \chi(i;n)$, which we assume to be valid when $i \ll n$. Thus, both ansatzes are assumed to be valid asymptotic expansions when $1 \ll i \ll n$.

This means that we can introduce an arbitrary $K$ with $1 \ll K \ll n$ and use the boundary layer ansatz for $x(i;n)$ when $i \leq K$ or $i \geq n- K$ and use the bulk ansatz for $x(i;n)$ when $K < i < n-K$. By the principles that underly the method of matched asymptotic expansions, the precise dependence of $K$ on $n$ should not matter; the behaviour of $\xi(s;n)$ as $s \to 0$ should match with the behaviour of $\chi(i;n)$ as $i \to \infty$ so as to yield consistent asymptotic expressions for $x(i)$ when $1 \ll i \ll n$ regardless of whether $i$ is treated as being in the bulk regime or the boundary layer regime. We think of $K$ as an arbitrary intermediate point where we connect the bulk ansatz with the boundary layer ansatz.

We make the following assumptions about the behaviour of $\xi(s;n)$ and $\chi(i;n)$:
\begin{description}
 \item[(MinSpacing)] There exists $M > 0$ such that $\chi(i+1) - \chi(i) \geq M$ whenever $i \ll n$, and $\xi'(s) \geq M$ whenever $s \gg n^{-1}$ and $1 - s \gg n^{-1}$.
 \item[($\xi$-Smooth)] $\xi \in C^\infty((\eta,1-\eta))$ for any choice of $\eta$ where  $n^{-1} \ll \eta \ll 1$.
\end{description}
As previously, these statements must all hold true in the asymptotic limit as $n \to \infty$ where $\xi$ and $\chi$ are replaced with
\[
 \xi(s) = \sum_{k=0}^{Q} n^{-b_k} \xi_k(s) + O\big(n^{-b_{Q+1}}\big), \quad \text{and} \quad \chi(i) = \sum_{j=0}^{P} n^{-\beta_j} \chi_j(i) + O\big(n^{-\beta_{P+1}}\big),
\]
respectively for any choices of $P$ and $Q$.

We note that {\bf(MinSpacing)} implies that $\chi(i+1) - \chi(i) \geq M$ whenever $i \leq K$, and equally that
\[
 n \int_{\frac{i}{n}}^{\frac{i+1}{n}} \xi(s) \, \mathrm{d} s \geq M,
\]
whenever $i \geq K$, regardless of the choice of $K$ as long as $1 \ll K \ll n$. Hence, {\bf(MinSpacing)} implies a minimum separation between particles that holds uniformly in $n$ independently of the choice of `cutoff' between the bulk region and the boundary layer region.

We also note that replacing $\chi$ and $\xi$ with their leading order approximations in {\bf(MinSpacing)} and considering the limits as $n \to \infty$, yields the result that $\chi_0(i+1) - \chi_0(i) \geq M$ and $\xi_0'(s) \geq M$ throughout. Additionally, we observe that {\bf(MinSpacing)} places growth restrictions on higher order corrections to  $\chi$ and $\xi'$. Specifically, it means that $\chi_j(i+1) - \chi_j(i)$ cannot grow (negatively) at a rate greater than $i^{\beta_j}$ as $i \to \infty$, and that $\xi_k'(s)$ cannot grow (negatively) at a rate greater than $s^{-b_k}$ as $s \to 0$.

We further use the symmetry of the problem to assert that $\xi(1-s) = 1-\xi(s)$ and that $x(n-i) = 1 - n^{-1} \chi(n-i;n)$ when $n - i = O(1)$. We also assume that the bulk ansatz and the discrete boundary layer ansatz are the only scalings that we need to consider for the method of matched asymptotic expansions. That is, we assume that there is no distinguished intermediate scaling between $i = \ord(n)$ and $i = \ord(1)$. A justification of this assumption can be obtained by using the methods described in \S\ref{sec:Formal1-BLFail}.

Given that $V(x) = |x|^{-a}$, the force balance equation from \eqref{eq:equilibrium} yields
\begin{equation}
 an^{-a-1} \bigghaa{ \sum_{k=1}^{i} 
 \big[x(i) - x(i-k)\big]^{-a-1}
 - \sum_{k=1}^{n-i} 
 \big[x(i+k) -x(i)\big]^{-a-1} }
 =0.
 \label{eq:ForceBalance-PowerLaw}
\end{equation}

In the remainder of this section, we concentrate on analysing force balance in the bulk, where $i = \ord(n)$. As described above, we split the sums into regions where we apply the continuum ansatz for $x(i\pm k)$ and regions where we apply the discrete ansatz:
\begin{multline}
 \underbrace{an^{-a-1} \sum_{k=0}^{K-1}
 \left( 
 \big[\xi(in^{-1}) - n^{-1}\chi(k)\big]^{-a-1}
 - \big[ 1 - \xi(in^{-1}) - n^{-1}\chi(k) \big]^{-a-1}
 \right)}_{S_{0}} \\
 + an^{-a-1} \bigg( \sum_{k=1}^{i-K} 
 \big[\xi(in^{-1}) - \xi([i-k]n^{-1})\big]^{-a-1}
 - \sum_{k=1}^{n-K-i} 
 \big[\xi([i+k]n^{-1}) -\xi(in^{-1})\big]^{-a-1} \bigg)
 =0. 
 \label{eq:ForceBalance-PowerLaw-ApplyAnsatzes}
\end{multline}
Since $K \ll n$, the sum marked $S_0$ in \eqref{eq:ForceBalance-PowerLaw-ApplyAnsatzes} is $o(n^{-a})$. From previously, we recognise that the leading order terms in the bulk force balance will be $\ord(n^{-1})$; hence, it will be possible to obtain expressions for $\xi(s)$ up to $o(n^{-(a-1)})$ while entirely neglecting any contributions from $S_0$. Higher order corrections to $\xi(s)$ may be obtained by expanding the summand of $S_0$ using Taylor series, and then exploiting the properties of $\chi(i)$. While it is possible to carry out these manipulations, we do not consider these high-order corrections in detail in this paper. 

We can therefore follow the approach used previously and neglect $S_0$. This leads us to define the following force function, $F(s)$, noting that force balance in the bulk requires $F(s) = o(n^{-a})$ for all $s = \frac{i}{n}$ where $i = \ord(n)$ and $n-i = \ord(n)$:
\[
 F(s) := 
 an^{-a-1} \Biggbhaa{ \sum_{k=1}^{\floor{sn -K}} 
 \big[\xi(s) - \xi(s-kn^{-1})\big]^{-a-1}
 - \sum_{k=1}^{\floor{n-K-sn}} 
 \big[\xi(s+kn^{-1}) -\xi(s)\big]^{-a-1} }.
\]
We now separate $F(s)$ into three parts as previously, introducing an arbitrary integer $H$ where 
$n^{\frac{a}{a+1}} \ll H \ll n$:
\begin{multline} \label{eq:FasympPwr:expanded}
 F(s) = \underbrace{a n^{-a-1}\sum_{k=1}^{H} \Big( 
 \big[\xi(s) - \xi(s-kn^{-1})\big]^{-a-1}
 - \big[\xi(s+kn^{-1}) -\xi(s)\big]^{-a-1} \Big)}_{=: S_1} \\
 + \underbrace{a n^{-a-1} \sum_{k=H+1}^{\floor{sn} - K} 
 \big[\xi(s) - \xi(s-kn^{-1})\big]^{-a-1}}_{=: S_2} \\
 - \underbrace{a n^{-a-1} \sum_{k=H+1}^{\floor{n-sn} - K}
 \big[\xi(s+kn^{-1}) -\xi(s)\big]^{-a-1}}_{=: S_3}.
\end{multline}

We note that $H \gg n^{\frac{a}{a+1}}$ places restrictions on $K$, since we require $K \gg H$ in order for the sums $S_2$ and $S_3$ to contain large numbers of terms. This lower bound on $K$ might suggest the presence of a distinguished scaling between $i = \ord(n)$ and $i = \ord(1)$, so that there is a continuum boundary layer problem to solve between the continuum bulk problem and the discrete boundary layer. We expect that the methods in \S\ref{sec:Formal1-BLFail} could be used to show that no such continuum boundary layer problem can exist and that hence the bulk ansatz is valid for all $i \gg 1$, but we do not pursue this analysis further.


We begin our analysis of \eqref{eq:FasympPwr:expanded} by considering $S_2$.  Using the Euler--Maclaurin summation formula with an offset from the integers (see, for example, \cite{Sidi2012}), we find that
\begin{multline}
 \label{eq:Formal2-S2}
 S_2 = an^{-a} \int_{Kn^{-1}}^{s-Hn^{-1}} \frac{\mathrm{d} u}{[\xi(s) - \xi(u) ]^{a+1}} \\
     - an^{-a-1} \left(\frac{1}{2 [\xi(s) - \xi(s-Hn^{-1})]^{a+1}} + \frac{B_1(\{sn\})}{[\xi(s) - \xi(Kn^{-1})]^{a+1}} \right) \\
     + a(a+1) n^{-a-1} \int_{Kn^{-1}}^{s-Hn^{-1}} \frac{B_1(\{sn+un\}) \xi'(u)}{[\xi(s) - \xi(u) ]^{a+2}}  \, \mathrm{d}u, 
\end{multline}
where $B_1(\{\cdot\})$ is the 1-periodic extension of the first Bernoulli polynomial. Using {\bf(MinSpacing)} we observe that $\xi(s) - \xi(s-Hn^{-1}) \geq MHn^{-1}$ and that $\xi(s) - \xi(Kn^{-1}) = \ord(1)$. Using H\"older's inequality to show that the integral remainder term is asymptotically no larger than the terms on the second line of \eqref{eq:Formal2-S2}, we therefore find that
\[
 S_2 = an^{-a} \int_{Kn^{-1}}^{s-Hn^{-1}} \frac{\mathrm{d} u}{[\xi(s) - \xi(u) ]^{a+1}} + O(H^{-a-1}).
\]
An identical argument applies to $S_3$. Using the fact that $H \gg n^{\frac a{1+a}}$ and $K \ll n$, we can combine the expansions of $S_2$ and $S_3$ to show that
\begin{equation}
 \label{eq:Formal2-S2S3}
 S_2 + S_3 
 = an^{-a} \left[ \int_{0}^{s-Hn^{-1}} \frac{\mathrm{d}u}{[\xi(s) - \xi(u)]^{a+1}} 
 -  \int_{s+Hn^{-1}}^{1} \frac{\mathrm{d}u}{[\xi(u) - \xi(s)]^{a+1}} \right]
 + o(n^{-a}).
\end{equation}

%

Now consider $S_1$. Using Taylor's theorem, {\bf($\xi$-Smooth)} implies that $\xi(s\pm kn^{-1})$ can be approximated by the series
\[
 \xi(s\pm kn^{-1}) \sim
 \xi(s) \pm \xi'(s) kn^{-1} + \tfrac{1}{2} \xi''(s) k^2n^{-2} + \ldots,
\]
which is asymptotic for any $k \ll n$ and $s \gg n^{-1}$. Since $k \leq H \ll n$ in $S_1$ and $s = \ord(1)$ in our present analysis, we can apply this asymptotic expansion throughout. 
This yields
\begin{multline}
 \label{eq:Formal2-S1First}
 S_1 \sim
 a \sum_{k=1}^{H} k^{-a-1} \Big( 
 \big[\xi'(s) - \tfrac{1}{2} k n^{-1}\xi''(s) + \tfrac{1}{6} k^2 n^{-2} \xi'''(s) + \ldots \big]^{-a-1} \\
 - \big[\xi'(s) + \tfrac{1}{2} k n^{-1}\xi''(s) + \tfrac{1}{6} k^2 n^{-2} \xi'''(s) + \ldots\big]^{-a-1} \Big).
\end{multline}
Noting that $k n^{-1} \ll 1$, we can apply the binomial series and rearrange to obtain
\begin{multline}
 S_1 \sim
 \sum_{k=1}^{H} k^{-a-1} \bigg( 
 (-a)_2 \xi''(s) \big[\xi'(s)\big]^{-a-2} k n^{-1}
 + \Big[
 \tfrac{1}{12} (-a)_2 \big[\xi'(s)\big]^{-a-2} \xi''''(s) \\
 + \tfrac{1}{6}(-a)_3 \big[\xi'(s)\big]^{-a-3} \xi''(s) \xi'''(s)
 + \tfrac{1}{24} (-a)_4 \big[\xi'(s)\big]^{-a-4} \big[\xi''(s)\big]^3 
 \Big]
  k^3 n^{-3}
 + O(k^5 n^{-5}) \bigg)
 \label{eq:S1General-NoBell}
\end{multline}
where $(\cdot)_r$ is the Pochhammer symbol, defined so that $(\alpha)_r := \alpha \cdot (\alpha - 1) \cdots (\alpha - r + 1)$. Since the summand in equation \eqref{eq:S1General-NoBell} is obtained by taking compositions of functions defined as formal series, we can use the properties of partial Bell polynomials (see, for example, \cite{ComtetComb,Wang2009}) to obtain a general expression for the terms in the summand of \eqref{eq:S1General-NoBell}. Specifically, we find that
\begin{equation}
\label{eq:Formal2-S1Bell}
 S_1 \sim \sum_{k=1}^{H} \sum_{p=0}^\infty 
 \mathcal{B}_p[\xi](s)
  k^{-a+2p} n^{-(2p+1)},
\end{equation}
where
\begin{equation}
 \label{eq:Ap-Defn}
 \mathcal{B}_p[\xi](s) 
 := \left(\frac{2}{(2p+1)!} \sum_{q=1}^{2p+1}Y_{2p+1,q}\left[\frac{\xi''(s)}{2},\frac{\xi'''(s)}{3},\ldots,\frac{\xi^{(2p-q+3)}}{2p-q+3} \right] (-a)_{q+1} \big[\xi'(s)\big]^{-a-1-q}\right),
\end{equation} 
and $Y_{p,q}(t_1,t_2,\ldots,t_{p-q+1})$ is a partial Bell polynomial. These polynomials are defined by the expression
\begin{equation*}
  Y_{p,q}(t_1,t_2,\ldots,t_{p-q+1}) 
 = \sum \frac{p!}{r_1! \, r_2! \, \cdots \, r_{p-q+1}!} 
 \left(\frac{t_1}{1!} \right)^{r_1} \, \left(\frac{t_2}{2!} \right)^{r_2} \, \cdots \, \left(\frac{t_{p-q+1}}{(p-q+1)!} \right)^{r_{p-q+1}},
\end{equation*}
where the sum is taken over all integer sequences $\{r_1,\,r_2,\,\ldots,\,r_{p-q+1}\}$ where
\begin{equation*}
 \sum_{k=1}^{p-q+1} k r_k = p, \quad \text{and} \quad \sum_{k=1}^{p-q+1} r_k = q.
\end{equation*}
As described in \cite{ComtetComb}, partial Bell polynomials have the property that
\begin{equation*}
 \frac{1}{q!} \left[\sum_{j=1}^\infty \frac{x^{j}}{j!} \, t_j \right]^q = \sum_{p=q}^{\infty} Y_{p,q}(t_1,t_2,\ldots,t_{p-q+1}) \, \frac{x^p}{p!}.
\end{equation*}
It is this property of partial Bell polynomials that makes it possible to obtain \eqref{eq:Ap-Defn} from \eqref{eq:Formal2-S1First}.

Since $kn^{-1} \ll 1$, it is possible to swap the order of summation in \eqref{eq:Formal2-S1Bell} while still retaining asymptoticity:
\begin{equation*}
  S_1 \sim \sum_{p=0}^\infty 
 \mathcal{B}_p[\xi](s) \, n^{-(2p+1)} \sum_{k=1}^{H}
  k^{-a+2p}.
\end{equation*}
To evaluate the sum over $k$, we note from \cite{Hall2010} that the asymptotic behaviour of the generalised harmonic numbers is given by
\begin{equation*}
 \sum_{k=1}^{H} k^{-r} = 
 \begin{cases}
  \zeta(r) - \frac1{r-1} {H^{-(r-1)}} + O(H^{-r}), & r > 1, \\
  \log(H) + \gamma + O(H^{-1}), & r = 1, \\
  O(H^{1-r}), & r < 1,
 \end{cases}
\end{equation*}
where $\gamma$ is the Euler--Mascheroni constant. Then, from $n^{\frac{a}{a+1}} \ll H \ll n$ we obtain
\begin{equation*}
  \sum_{k=1}^{H}  k^{-a+2p} n^{-(2p+1)} = 
 \begin{cases}
  \zeta(a-2p)n^{-(2p+1)} - \frac{(Hn^{-1})^{-(a-2p-1)}}{a-2p-1}n^{-a} + o(n^{-a}), & 2p < a-1, \\
  \bighaa{ \log(H) + \gamma } n^{-a} + o(n^{-a}) & 2p = a-1, \\ 
  o(n^{-a}), & 2p > a-1, 
 \end{cases} 
\end{equation*} 
and \eqref{eq:Formal2-S1Bell} yields
\begin{multline}
\label{eq:Formal2-S1-TempForCombining}
 S_1 =\sum_{p=0}^{\ceil{\frac{a-1}{2}}-1} \left(
 \mathcal{B}_p[\xi](s) \left[\zeta(a-2p) n^{-(2p+1)} - \frac{(Hn^{-1})^{-(a-2p-1)}}{a-2p-1}n^{-a}\right]\right) \\
 + \mathbbm{1}_{a \in 2\mathbb{N}+1} \mathcal{B}_{\frac{a-1}{2}}[\xi](s) \big[ \log(H) + \gamma \big] n^{-a}
 + o(n^{-a}).
\end{multline}

In order to simplify this expression into a form where it can be combined with \eqref{eq:Formal2-S2S3}, it is useful to introduce finite part integration. Following \cite{Lyness1993,Monegato1998}, we define the one-sided finite part integral for 
functions that are well-behaved apart from a possible singularity at zero, and which satisfy
\[
 \psi(u) = \sum_{j=0}^{\Upsilon-1} c_j u^{-a_j} + c_{\Upsilon} u^{-1} + O(u^{-1+\delta}), \quad \text{as }u \to 0,
\]
where $a_0 > a_1 > \ldots > a_{\Upsilon-1} > 1$ and $\delta > 0$. In this case, we define
\begin{equation}
 \label{eq:GenFinitePartIntegral} 
 \dashint_0^y \psi(u) \, \mathrm{d}u := \lim_{\eta \to 0} \left[\int_\eta^y \psi(u) \, \mathrm{d}u - \sum_{j=0}^{\Upsilon-1} \frac{c_j \eta^{-(a_j-1)}}{a_j-1} - c_\Upsilon \log \frac1\eta  \right].
\end{equation}

From \eqref{eq:Formal2-S1Bell}, we note that
\[
 a\big[\xi(s) - \xi(s-u)\big]^{-a-1}
 - a\big[\xi(s+u) -\xi(s)\big]^{-a-1} 
 = \sum_{p=0}^{\floor{\frac{a-1}{2}}} 
 \mathcal{B}_p[\xi](s)
  u^{-(a-2p)} + O\left(u^{1 + 2\floor{\frac{a}{2}} - a}\right).
\]
Then, using \eqref{eq:GenFinitePartIntegral}, we obtain
\begin{multline*}
 \dashint_{0}^{y}   
 a\big[\xi(s) - \xi(s-u)\big]^{-a-1}
 - a\big[\xi(s+u) -\xi(s)\big]^{-a-1}  \, \mathrm{d}u \\
 = -\sum_{p=0}^{\ceil{\frac{a-1}{2}}-1}
 \mathcal{B}_p[\xi](s) \frac{y^{-(a-2p-1)}}{a-2p-1}
 + \mathbbm{1}_{a \in 2\mathbb{N}+1}  \mathcal{B}_{\frac{a-1}{2}}[\xi](s) \log(y) + o(1)
\end{multline*}
as $y \rightarrow 0$. Hence, \eqref{eq:Formal2-S1-TempForCombining} becomes
\begin{multline*}
 S_1 =\sum_{p=0}^{\ceil{\frac{a-1}{2}}-1}
 \mathcal{B}_p[\xi](s) \zeta(a-2p) n^{-(2p+1)} \\
 + a n^{-a} \dashint_0^{Hn^{-1}} \big[\xi(s) - \xi(s-u)\big]^{-a-1}
 - \big[\xi(s+u) -\xi(s)\big]^{-a-1}  \, \mathrm{d}u \\
 + \mathbbm{1}_{a \in 2\mathbb{N}+1} \mathcal{B}_{\frac{a-1}{2}}[\xi](s) \big[ \log(n) + \gamma \big] n^{-a}
 + o(n^{-a}).
\end{multline*}
Combining with \eqref{eq:Formal2-S2S3}, we therefore find that
\begin{multline}
\label{eq:Formal2-ForceBalance}
 F(s) = \sum_{p=0}^{\ceil{\frac{a-1}{2}} - 1} 
 \mathcal{B}_p[\xi](s)
  \zeta(a-2p) \, n^{-(2p+1)} \\
  + \bigghaa{ \mathbbm{1}_{a \in 2\mathbb{N}+1} \mathcal{B}_{\frac{a-1}{2}}[\xi](s) \big[ \log(n) + \gamma \big]
  + a \dashint_{0}^{1} \frac{\signum(s-u)}{| \xi(s) - \xi(u)|^{a+1}} \, \mathrm{d}u } n^{-a}
  +o\big(n^{-a}\big).
\end{multline}

In the more general case where $V$ satisfies \eqref{eq:GenPotential-Derivs}, we find that much of the argument outlined in this section still holds. Since it is possible to approximate $V'(x)$ by $-ax^{-a-1}$ for large $x$, we find that $S_2+S_3$ will still be given by \eqref{eq:Formal2-S2S3}. The most significant changes required to generalise our argument involve the manipulation of $S_1$. Repeated use of Taylor series (analogous to the manipulations of $V$ in \S \ref{sec:Formal1-Bulk}) are needed to obtain a new definition for $\mathcal{B}_p$ for a general $V$; specifically, we find that $(-a)_{q+1} [\xi'(s)]^{-a-1-q}$ in \eqref{eq:Ap-Defn} should be replaced with $k^{a+1+q}V^{(q+1)}[\xi'(s) k]$. 

While it is true that 
\[
 k^{a+1+q}V^{(q+1)}[\xi'(s) k] \to (-a)_{q+1} [\xi'(s)]^{-a-1-q} \quad \text{as } k \to \infty,
\]
the fact that the modified definition of $\mathcal{B}_p$ involves $k$ creates complications for the manipulation of sums involving $k$ through the rest of the argument. Ultimately, we find that the asymptotic properties of these sums mean that the approach outlined above remains valid, and that the analogous equation to \eqref{eq:Formal2-ForceBalance} is
\begin{multline*}
 F(s) = \sum_{p=0}^{\ceil{\frac{a-1}{2}} - 1} 
 \bar{\mathcal{B}}_p[\xi](s)
  n^{-(2p+1)} \\
  + \bigghaa{ \mathbbm{1}_{a \in 2\mathbb{N}+1} \bigg[ \tilde{\mathcal{B}}_{\frac{a-1}{2}}[\xi](s) \log(n)  + \mathcal{G}[\xi] \bigg]
  + a \dashint_{0}^{1} \frac{\signum(s-u)}{| \xi(s) - \xi(u)|^{a+1}} \, \mathrm{d}u } n^{-a}
  +o\big(n^{-a}\big),
\end{multline*}
where $\bar{\mathcal{B}}_p$, $\tilde{\mathcal{B}}_{\frac{a-1}{2}}$, and $\mathcal{G}$ depend on $V$. Note that the terms which gave rise to the zeta function and Euler--Mascheroni constant in \eqref{eq:Formal2-ForceBalance} are replaced with new formulations that depend on $V$ and $\xi'$, but the overall structure of the total force from \eqref{eq:Formal2-ForceBalance} remains the same. We find that
\begin{equation}
\label{eq:barA0-Defn}
 \bar{\mathcal{B}}_0[\xi](s) = \xi''(s) \sum_{k=1}^{\infty} V'' [\xi'(s) k] k^{2}, 
\end{equation}
and that $\bar{\mathcal{B}}_p[\xi]$, $\tilde{\mathcal{B}}_{\frac{a-1}{2}}[\xi]$ and $\mathcal{G}[\xi]$ all evaluate to the zero function when $\xi$ is affine. These observations enable us to extend the results of the following section to more general potentials $V$ that satisfy \eqref{eq:GenPotential-Derivs}.


\subsection{Solving for higher order corrections in the bulk}
\label{sec:Formal2-Solve} 

We now return to the case where $V(x) = |x|^{-a}$ and we seek an asymptotic expansion of $\xi(s)$ that will enable \eqref{eq:ForceBalance-PowerLaw-ApplyAnsatzes} to be satisfied for integers $i$ where $i \gg K$ and $n-i \gg K$. If we restrict our analysis to corrections up to $\ord\big[n^{-(a-1)}\big]$, we find that this is equivalent to seeking $\xi(s)$ so that $F(s) = o(n^{-a})$, and hence we can make immediate use of \eqref{eq:Formal2-ForceBalance}. Thus, we begin by expanding $\xi(s)$ as an asymptotic series as follows:
\begin{equation} \label{for:xi:exp:a:higher:orders:PversionInText}
  \xi(s) = \xi_0(s) + \sum_{k=1}^{\bar{Q}} n^{-b_k} \xi_k(s) + n^{-(a-1)} \tilde \xi(s) + o(n^{-(a-1)}),
  \qquad 0 < b_1 < \ldots < b_{\bar{Q}} < a-1;
\end{equation}
where $\bar{Q}$ may perhaps be infinite or zero. 

%
%
%

On substituting \eqref{for:xi:exp:a:higher:orders:PversionInText} into \eqref{eq:Formal2-ForceBalance}, we find that the largest nontrivial terms are recovered at $O(n^{-1})$. These yield the result that $\zeta(a)\mathcal{B}_0[\xi_0](s) = 0$. Using the definition of $\mathcal{B}_p[\xi]$ in \eqref{eq:Ap-Defn}, this becomes
\[
 \zeta(a) (-a)_2 \xi_0''(s) \big[\xi_0'(s)\big]^{-a-2} = 0,
\]
and hence $\xi_0(s)$ is affine. More specifically, we can use the leading order boundary conditions from \eqref{eq:xi0-BCs} to conclude that $\xi_0'(s) = 1$ and $\xi_0(s) = s$.


In order to characterise the next nontrivial term in the expansion of $F(s)$, we assume for the moment that $b_1 < a-1$ to avoid dealing with the singular integral term at $O(n^{-a})$. Since $\xi_0'(s)$ is constant and nonzero, it follows from \eqref{eq:Ap-Defn} that $\mathcal{B}_p[\xi_0] \equiv 0$ for all $p$. Hence, the next nontrivial terms in the expansion of \eqref{eq:Formal2-ForceBalance} appear at $O(n^{-1-b_1})$, where we find that
\[
 \zeta(a) (-a)_2 \xi_1''(s) = 0.
\]
Again, we conclude that $\xi_1(s)$ is affine and we find that $\mathcal{B}_p[\xi_0 + n^{-b_1}\xi_1] \equiv 0$ for all $p$. We cannot apply boundary conditions to $\xi_1(s)$ at this stage, since the boundary conditions on $\xi_1$ will depend on the matching between the bulk solution and the boundary layer solution. However, we can use the symmetry of the force balance problem to conclude that $\xi_1(s) = -\xi_1(1-s)$ and hence
\[
 \xi_1(s) =(s - \tfrac12)  p_1 
\]
where $p_1  = \xi_1'(s)$ is a constant to be determined from matching with the boundary layer.

As long as $b_k < a-1$ we can apply the same argument to show that $\xi_k$ is affine. We will use this freedom in the choice of $b_k$ later on to match with the boundary layer. For now, we rewrite the expansion of $\xi$ in \eqref{for:xi:exp:a:higher:orders:PversionInText} as
\begin{equation*}
 \xi(s) = s + \bar{p}(n) (s - \tfrac12) + n^{-(a-1)}\tilde{\xi}(s) + o(n^{-(a-1)})
\end{equation*}
where $\bar{p} := p_1 n^{-b_1} + p_2 n^{-b_2} + \ldots$, so that $\bar{p} \ll 1$. This yields
\[
  n^{-a} \left[ \zeta(a) (-a)_2 \tilde{\xi}''(s) + a \dashint_0^{1} \frac{\signum(s-u)}{|s-u|^{a+1}} \, \mathrm{d}u  \right] = o(n^{-a}).
\]
Next we solve for $\tilde \xi$. Since 
\begin{equation*}
  a \dashint_0^{1} \frac{\signum(s-u)}{|s-u|^{a+1}} \, \mathrm{d}u
  = (1-s)^{-a} - s^{-a},
\end{equation*}
we have that
\begin{equation*}
  \tilde{\xi}''(s) = \frac{s^{-a} - (1-s)^{-a}}{\zeta(a) (-a)_2}.
\end{equation*}
By using again the symmetry of the force balance (i.e.~$\tilde \xi(s) = - \tilde \xi(1-s)$), we obtain
\begin{equation}
 \tilde{\xi}(s) =
 \left\{ \begin{aligned}
    &\frac{s^{-(a-2)} - (1-s)^{-(a-2)}}{\zeta(a) (-a+2)_4} + (s - \tfrac12)\tilde{p} , && a \neq 2 \\
  &\frac{\log (1-s) - \log(s) }{\pi^2} + (s - \tfrac12)\tilde{p} , && a = 2.
  \end{aligned} \right. 
 \label{eq:Formal2-xiSolnLogs}
\end{equation}
where $\tilde{p}$ is a constant to be determined from matching with the boundary layer. 


In the more general case where $V$ satisfies \eqref{eq:GenPotential-Derivs}, we still find that $\xi_k(s) = p_k(s - \tfrac{1}{2})$ whenever $b_k < a-1$ as a consequence of the fact that $\bar{\mathcal{B}}_p[\xi] \equiv 0$ when $\xi$ is affine. We can also evaluate $\tilde{\xi}$ by using the definition of $\bar{\mathcal{B}}_0$ given in \eqref{eq:barA0-Defn}. This yields 
\begin{equation}
 \tilde{\xi}(s) =
 \left\{ \begin{aligned}
    &\frac{s^{-(a-2)} - (1-s)^{-(a-2)}}{Z(V) (-a+2)_2} + (s - \tfrac12)\tilde{p} , && a \neq 2 \\
  &\frac{\log (1-s) - \log(s) }{Z(V)} + (s - \tfrac12)\tilde{p} , && a = 2.
  \end{aligned} \right. 
 \label{eq:Formal2-xiSolnLogs-General}
\end{equation}
where 
\[ Z(V) := \sum_{k=1}^{\infty} V''(k) k^2.\]

\subsection{Asymptotic analysis in the boundary layer}
\label{sec:Formal2-BL}

We now return to assuming $V(x) = |x|^{-a}$ and seek solutions for $\chi_j(i)$ by considering the case where $i = \ord(1)$ in \eqref{eq:ForceBalance-PowerLaw}. We recall that we introduced $K$ at the beginning of \S \ref{sec:Formal2-bulk} so that $1 \ll K \ll n$, and hence $K$ is in the intermediate region where both the boundary layer ansatz and the bulk ansatz can be used. 


Assuming $i = \ord(1)$, we split the sums in \eqref{eq:ForceBalance-PowerLaw} to obtain
\begin{multline}
\label{eq:Formal2-BLForceBalance1}
 \underbrace{a \sum_{\substack{k=0 \\ k\neq i}}^{K} 
 \frac{\signum(i-k)}{|\chi(i) - \chi(k)|^{a+1}}}_{S_4}
 -\underbrace{an^{-a-1}\sum_{k=K+1}^{n-K-1} 
 \big[\xi(kn^{-1}) -n^{-1}\chi(i)\big]^{-a-1}}_{S_5} \\
 -\underbrace{an^{-a-1}\sum_{k=0}^{K} 
 \big[1-n^{-1}\chi(k) -n^{-1}\chi(i)\big]^{-a-1}}_{S_6} 
 =0.
\end{multline}

Since all the terms in the summand of $S_6$ are $O(1)$, we find that $S_6 = O(Kn^{-a-1}) = o(n^{-a})$. Moreover, using {\bf(MinSpacing)} we obtain that
\begin{equation*}
  S_5 
  \lesssim n^{-a-1} \sum_{k=K+1}^{n-K-1} \big[M kn^{-1}\big]^{-a-1}
  \lesssim \sum_{k=K}^{\infty} k^{-a-1}
  = O (K^{-a}),
\end{equation*}
and hence \eqref{eq:Formal2-BLForceBalance1} becomes
\begin{equation}
\label{eq:Formal2-BLProb}
 a \sum_{\substack{k=0 \\ k\neq i}}^{K} 
 \signum(i-k)|\chi(i) - \chi(k)|^{-a-1} = O\big(K^{-a}\big).
\end{equation}


Following the methods described in \S \ref{sec:Formal1-BL}, it follows that the leading order solution in the boundary layer is a solution to the infinite system of algebraic equations
\begin{equation*}
  a \sum_{\substack{k=0 \\ k\neq i}}^{\infty} 
 \signum(i-k)|\chi_0(i) - \chi_0(k)|^{-a-1} = 0, \quad i = 1,\,2,\,\ldots
\end{equation*}
subject to the matching condition $\chi_0(i) - \chi_0(i-1) \rightarrow 1$ as $i \rightarrow \infty$.

To obtain higher order corrections, we begin by assuming an asymptotic power series expansion for $\chi(i)$. As in \S\ref{sec:Formal2-Solve}, we will seek solutions up to $\ord[n^{-(a-1)}]$ and thus it is convenient to introduce a power series of the form
\begin{equation}
\label{eq:Formal2-chiAsymp-betaGeneral}
  \chi(i) = \chi_0(i) + \sum_{j=1}^{\bar{P}} n^{-\beta_j} \chi_j(i) + n^{-(a-1)} \tilde \chi(i) + o[n^{-(a-1)}],
  \qquad 0 < \beta_1 < \ldots < \beta_{\bar{P}} < a-1;
\end{equation}
where $\bar{P}$ may be zero or infinite.

As we discuss in \S\ref{sec:Formal2-Matching}, asymptotic matching implies that $\chi_j(i) - \chi_j(i-1)$ must have a finite limit as $i \to \infty$ for any $\beta_j < a-1$, and an identical result holds for  $\tilde{\chi}(i) - \tilde{\chi}(i-1)$. The fact that these limits are finite enables us to make significant simplifications after we substitute \eqref{eq:Formal2-chiAsymp-betaGeneral} into \eqref{eq:Formal2-BLProb}. Applying the multinomial expansion, we see that this yields
\begin{multline}
 \label{eq:Formal2-ExpandedSums}
 a \sum_{\substack{k=0 \\ k\neq i}}^{K} 
 \signum(i-k)|\chi(i) - \chi(k)|^{-a-1}  
 \sim a \sum_{\substack{k=0 \\ k\neq i}}^{K} 
 \signum(i-k)|\chi_0(i) - \chi_0(k)|^{-a-1} \\
 - n^{-\beta_1} a(a+1) \sum_{\substack{k=0 \\ k\neq i}}^{K} |\chi_0(i) - \chi_0(k)|^{-a-2} \left[\chi_1(i) - \chi_1(k) \right] \\
 - n^{-\beta_2} a(a+1) \sum_{\substack{k=0 \\ k\neq i}}^{K} |\chi_0(i) - \chi_0(k)|^{-a-2} \left[\chi_2(i) - \chi_2(k) \right] \\
 + \frac{n^{-2\beta_1}}{2} a(a+1)(a+2) \sum_{\substack{k=0 \\ k\neq i}}^{K} |\chi_0(i) - \chi_0(k)|^{-a-3} \left[\chi_1(i) - \chi_1(k) \right]^2 + \ldots = O(K^{-a}).
\end{multline}
Since $\chi_j(i) - \chi_j(k) \sim C_j (i- k)$ for some constant $C_j$ as $k \to \infty$, it follows that 
\begin{gather*}
 \sum_{k=K+1}^{\infty} 
 \signum(i-k)|\chi_0(i) - \chi_0(k)|^{-a-1} = O(K^{-a}), \\
  \sum_{k=K+1}^{\infty} |\chi_0(i) - \chi_0(k)|^{-a-2} \left[\chi_1(i) - \chi_1(k) \right] = O(K^{-a}),
\end{gather*}
and so on. This enables us to extend the sums in \eqref{eq:Formal2-ExpandedSums} to infinity without introducing significant errors. Choosing $K$ so that $n^{1-\frac{1}{a}} \ll K \ll n$, we therefore find that
\eqref{eq:Formal2-BLProb} becomes
\begin{multline*}
 a \sum_{\substack{k=0 \\ k\neq i}}^{\infty} 
 \signum(i-k)|\chi_0(i) - \chi_0(k)|^{-a-1} \\
 - n^{-\beta_1} a(a+1) \sum_{\substack{k=0 \\ k\neq i}}^{\infty} |\chi_0(i) - \chi_0(k)|^{-a-2} \left[\chi_1(i) - \chi_1(k) \right]
 + \ldots = o[n^{-(a-1)}].
\end{multline*}

Collecting $\ord(n^{-\beta_1})$ terms, we obtain the following infinite homogeneous linear system for system for $\chi_1(i)$:
\begin{equation}
 \label{eq:chi1-HomoSys}
  -a(a+1) \sum_{\substack{k=0 \\ k\neq i}}^{\infty} |\chi_0(i) - \chi_0(k)|^{-a-2} \left[\chi_1(i) - \chi_1(k) \right] = 0, \quad i = 1,\,2,\,\ldots,
\end{equation}
where $\chi_1(0) = 0$ due to the fact that $x(0) = 0$. This system must be solved subject to some matching condition that relates the behaviour of $\chi_1(i)$ as $i \to \infty$ to the behaviour of the bulk solution as $s \to 0$. Since $|\chi_0(i) - \chi_0(k)|^{-a-2} = O(k^{-a-2})$ as $k \to \infty$, we observe that the sums in \eqref{eq:chi1-HomoSys} are absolutely convergent when $\chi_1(k) = O(k^{a+1-\delta})$ for some $\delta > 0$. Since asymptotic matching gives $\chi_j(k) = O(k)$ as $k \to \infty$ for all $\beta_j < a-1$, it follows that the sums in \eqref{eq:chi1-HomoSys} are absolutely convergent for any $i$.

In \S\ref{sec:Formal2-Matching}, we show that asymptotic matching can be used to determine the exponents $\beta_j$ and $b_k$. As we will see, this analysis relies on the claim that if \eqref{eq:chi1-HomoSys} is solved subject to the particular matching condition $\chi_1(i) - \chi_1(i-1) \to 0$, then the only possible solution is the trivial solution, $\chi_1(i) \equiv 0$. To prove this claim, we observe that \eqref{eq:chi1-HomoSys} is a linear equation of the type $\mathcal A \chi_1 = 0$, where we interpret $(\chi_1(i))_{i=1}^\infty$ as a sequence and $\mathcal A$ as an infinite matrix $A$ with entries
\begin{equation*}
  A_{ij} := \left\{ \begin{aligned}
    &- |\chi_0(i) - \chi_0(j)|^{-a-2},
    && \text{if } i \neq j \\
    &\sum_{\substack{k=0 \\ k\neq i}}^{\infty} |\chi_0(i) - \chi_0(k)|^{-a-2},
    && \text{if } i = j
  \end{aligned} \right\},
  \qquad \text{for } i,j \geq 1.
\end{equation*}
Since $A$ is strictly diagonally dominant and symmetric, $\zeta \mapsto \zeta^T \mathcal A \zeta$ is a
positive, strictly convex function on the space of sequences satisfying the matching condition $\zeta(i)-\zeta(i-1)\to0$, and is thus uniquely globally minimised when $\zeta=0$. Since any $\chi_1$ satisfying $\chi_1(k) = O(k)$ and $\mathcal A \chi_1 = 0$ also satisfies
$\chi_1^T \mathcal A \chi_1 = 0$, it follows that $\chi_1 = 0$, which proves the claim.

A corollary of this claim is that any solution obtained to \eqref{eq:chi1-HomoSys} subject to the matching condition $\chi_1(i) - \chi_1(i-1) \to p$ is unique, since otherwise the difference between two such solutions would be a nonzero solution to \eqref{eq:chi1-HomoSys} that satisfies $\chi_1(i) - \chi_1(i-1) \to 0$.

From the form of \eqref{eq:Formal2-ExpandedSums}, we observe that each higher correction $\chi_j$ will satisfy an infinite linear system of the form
\begin{equation}
 \label{eq:chij-HomoSys}
  -a(a+1) \sum_{\substack{k=0 \\ k\neq i}}^{\infty} |\chi_0(i) - \chi_0(k)|^{-a-2} \left[\chi_j(i) - \chi_j(k) \right] = g_j(i), \quad i = 1,\,2,\,\ldots,
\end{equation}
where $g_j(i)$ is obtained from the $\ord(n^{-\beta_j})$ terms in the multinomial expansion of $|\chi(i) - \chi(k)|^{-a-1}$, which in turn only depend on $\chi_0, \ldots, \chi_{j-1}$. Once an appropriate matching condition is specified in the form  $\chi_j(i) - \chi_j(i-1) \to q_j$ for some constant $q_j$, we find that there will be a unique solution for $\chi_j$. Similarly, $\tilde{\chi}$ will satisfy a linear system of the form given in \eqref{eq:chij-HomoSys}, and the identical style of matching condition will be required.

We note that $g_j(i)$ will only be nonzero if $\beta_j$ can be expressed as the sum of $\beta_J$ values (possibly including repetitions) where $J < j$. For example, $g_2$ will only be nonzero if $\beta_2$ is a multiple of $\beta_1$. Since the linear system for $\chi_j$ above is identical to the linear system for $\chi_1$, we see that $\chi_j(i) - \chi_j(i-1) \not \to 0$ as $i \to \infty$ is necessary for $\chi_j$ to have a nontrivial solution unless $g_j$ is nonzero. This is an important observation for performing the matched asymptotic analysis in \S \ref{sec:Formal2-Matching}.

In the more general case where $V$ satisfies \eqref{eq:GenPotential-Derivs}, we find with very minor modifications of the analysis above that each $\chi_j$ satisfies the infinite linear system
\[
  -\sum_{\substack{k=0 \\ k\neq i}}^{\infty} V''\big[\chi_0(i) - \chi_0(k)\big] \left[\chi_j(i) - \chi_j(k) \right] = g_j(i), \quad i = 1,\,2,\,\ldots,
\]
where the functions $g_j(i)$ are obtained from Taylor series expansions of $V'(\chi(i) - \chi(k))$.

\subsection{Matching between the bulk and the boundary layer}
\label{sec:Formal2-Matching}

We established in \S\ref{sec:Formal2-Solve} that $\xi$ can be expanded as an asymptotic series of the form \eqref{for:xi:exp:a:higher:orders:PversionInText} where $\xi_k(s) = (s - \tfrac12)  p_k$ and $\tilde{\xi}$ is given in \eqref{eq:Formal2-xiSolnLogs}. Additionally, we established in \S\ref{sec:Formal2-BL} that $\chi$ can be expanded as an asymptotic series of the form  \eqref{eq:Formal2-chiAsymp-betaGeneral}, where $\chi_j$ for $j > 0$ and $\tilde{\chi}$ are all solutions to infinite linear systems subject to a condition of the form $\chi_j(i) - \chi_j(i-1) \to q_j$ (and similarly for $\tilde\chi$). However, we have not yet characterised the exponents $b_k$ and $\beta_j$ in the power series  \eqref{for:xi:exp:a:higher:orders:PversionInText} and  \eqref{eq:Formal2-chiAsymp-betaGeneral}, neither have we determined the constants $p_k$, $\tilde{p}$, $q_j$ and $\tilde{q}$. We achieve this by using the method of matched asymptotic expansions.

We perform our asymptotic matching by introducing an intermediate matching variable, $R$, where $R$ is an integer with $1 \ll R \ll n$. We assert that this $R$ lies in the `overlap region', so that both the bulk ansatz and the boundary layer ansatz yield asymptotic series solutions for $x(R;n)$ when $1 \ll R \ll n$. This involves making some assumptions about the asymptoticity of the bulk and boundary layer solutions outside the domains in which they are naturally defined. For example, we recall that we assumed that $i = \ord(n)$ in order to obtain the bulk equations described in \S\ref{sec:Formal2-bulk}. We now assert that the bulk series solution obtained in \S \ref{sec:Formal2-Solve} remains valid whenever $i \gg 1$. That is, we assert that $\xi_0(Rn^{-1}) \gg n^{-b_k} \xi_k(Rn^{-1})$ for any $k > 0$ as long as $R \gg 1$. Despite the fact that $\tilde{\xi}(s)$ becomes unbounded as $s \to 0$, we observe that this assumption is consistent with comparing $\xi_0(s) = s$ with the solution for  $\tilde{\xi}$ given in \eqref{eq:Formal2-xiSolnLogs}.


The matching variable, $R$, is distinct from the cut-off, $K$, used in several of the sums. We introduce the matching variable in order to analyse the relationship between the solution of the discrete boundary layer problem and the solution of the continuum bulk problem, whereas we introduce $K$ in order to account for the `bulk' and `boundary layer' contributions to the force on any individual particle. 


Asymptotic matching requires that $\xi(Rn^{-1})$ and $n^{-1} \chi(R)$ should be asymptotically equivalent throughout the overlap region. That is, we require that
\begin{equation}
 \label{eq:IntermediateAsymptotics-Prelim}
 \xi_0(Rn^{-1}) + n^{-b_1} \xi_1(Rn^{-1}) + \ldots \sim n^{-1} \chi_0(R) + n^{-\beta_1-1} \chi_1(R) + \ldots,
\end{equation}
for all choices of $R$ with $1 \ll R \ll n$. Each term obtained from expanding $\xi(Rn^{-1})$ under the assumption that $Rn^{-1}$ is small should match with an equivalent term obtained from expanding $n^{-1} \chi(R)$ under the assumption that $R$ is large. In the case where logarithmic terms and related complications are absent, this can be conveniently expressed using a matching table, in which the rows represent asymptotic expansions of $n^{-b_k}\xi_k(Rn^{-1})$ for small $Rn^{-1}$ and the columns represent expansions of $n^{-\beta_j - 1} \chi_j(R)$ for large $R$. Every row and column of the matching table should be a valid asymptotic series when $1 \ll R \ll n$, and every term in the interior of the table should be asymptotically larger than the terms below and to the right.

In order to construct a plausible matching table, we begin by exploiting the information that we already have about the functions $\xi_k$ and $\tilde \xi$. Specifically, we observe from our analysis in \S \ref{sec:Formal2-Solve} that
\begin{equation}
\label{eq:GenMatching-LinOnLHS}
n^{-b_k}\xi_k(Rn^{-1}) =  - \tfrac{p_k}{2} n^{-b_k} + p_k R n^{-b_k - 1}, \text{ wherever } b_k < a-1,
\end{equation}
while the further assumption that $a \neq 2$ yields
\begin{multline}
\label{eq:GenMatching-MoreComplicatedOnLHS}
 n^{-(a-1)}\tilde{\xi}(Rn^{-1}) = \frac{1 }{\zeta(a) (-a+2)_4}R^{-(a-2)}n^{-1} 
 \\
 - \left(\frac{1}{\zeta(a) (-a+2)_4} + \frac{\tilde{p}}{2} \right) n^{-(a-1)} 
 + \left(\frac{1}{\zeta(a) (-a+1)_3} + \tilde{p}\right) Rn^{-a}
 + O(R^2 n^{-(a+1)}).
\end{multline}

Based on these results, we construct the following `matching table' where each row and column can be read as an equation: \\
{\small \begin{tabular}{ c c c c c c c c c c c c}
 $x(i;n)$ & $\sim$ &  $n^{-1}\chi_0(R)$ & $+$ &  $n^{-\beta_1 - 1} \chi_1(R)$  & $+$ & $\cdots$ & $+$ &  $n^{-a} \tilde{\chi}(R)$ & $+$ & $\cdots$ \\
 $\wr$ & &  $\wr$ & &  $\wr$ &&&& $\wr$ \\
 $\xi_0(Rn^{-1})$ & $=$ & $Rn^{-1}$ & \\
  $+$ &  & $+$ & \\
 $n^{-b_1}\xi_1(Rn^{-1})$ & $=$ & $-\tfrac{p_1}{2}n^{-b_1}$ & + & $p_1 R n^{-b_1 - 1}$ \\
  $+$ &  & & & $+$ \\
 $n^{-b_2}\xi_2(Rn^{-1})$ & $=$ & && $-\tfrac{p_2}{2}n^{-b_2}$ & + & \ldots \\  
  $+$ & & $+$ &  & $+$ & & \\
  $\vdots$ & & $\vdots$ &  & $\vdots$ & & $\ddots$ \\
  $+$ & & $+$ &  & & & \\
 $n^{-(a-1)} \tilde{\xi}(Rn^{-1})$ & $\sim$ &  $\tilde \kappa_4 R^{-(a-2)}n^{-1}$  && & $+$ & $\cdots$ & $+$ & $(\tilde \kappa_3 + \tilde{p}) R n^{-a}$ & $+$ & $\cdots$ \\
  $+$ & & $+$ &  & $+$ & & & & $+$ \\
  $\vdots$ & & $\vdots$ &  & $\vdots$ & & $\ddots$ & & $\vdots$ && $\ddots$
\end{tabular}} \\
where $\tilde \kappa_r := \tfrac{1 }{\zeta(a) (-a+2)_r}$.

The matching table illustrates the fact that each term in the expansions of $n^{-b_k}\xi_k(Rn^{-1})$ given in \eqref{eq:GenMatching-LinOnLHS} and \eqref{eq:GenMatching-MoreComplicatedOnLHS} must correspond to an equivalent term in the asymptotic expansion of one of the functions $n^{-\beta_j-1} \chi_j(R)$.  While the entries  in the matching table above are based on the expansions of $\xi_k$, the columns must also be valid series. This places significant restrictions on the choices of $b_k$ and $\beta_j$; for example, inspection of the $n^{-1} \chi_0(R)$ column of the matching table strongly suggests that $b_1 = 1$.


More rigorously, we can determine the values of $b_k$ and $\beta_j$ without appealing directly to the matching table. Since $n^{-b_k}\xi_k(Rn^{-1})$ is given by \eqref{eq:GenMatching-LinOnLHS} when $b_k < a-1$, we find that the only terms on the left hand side of \eqref{eq:IntermediateAsymptotics-Prelim} that take the form $c_{\tau\upsilon} n^{-\tau} R^{\upsilon}$ for nonzero $c_\tau\upsilon$ are terms where $\upsilon = 1$ or $\upsilon = 0$ or
$\upsilon \leq \tau - a + 1$. This third possibility is associated with the case where $b_k \geq a-1$ and hence $n^{-b_k}\xi_k(Rn^{-1})$ may not be linear.

Since every term on the right hand side of \eqref{eq:IntermediateAsymptotics-Prelim} must balance with an identical term on the left hand side of \eqref{eq:IntermediateAsymptotics-Prelim}, this implies that
\begin{equation}
 \label{eq:GenMatching-RHS}
 \chi_j(R) = 
 \begin{cases}
 q_{j} R + \hat{q}_{j} + O(R^{\beta_j - (a-2)} ), & \beta_j < a-2; \\
 q_{j} R +  O(R^{\beta_j - (a-2)} ), & a-2 \leq \beta_j < a-1;
 \end{cases}
\end{equation}
where $q_j$ and $\hat{q}_j$ are constants. In order to match between equivalent terms on either side of \eqref{eq:IntermediateAsymptotics-Prelim}, we find that the values of the constants $q_j$ and $\hat{q}_j$ will be associated with values of $p_k$. Since \eqref{eq:GenMatching-RHS} is concerned with the behaviour of $\chi_j(R)$ when $R$ is large, we note that differencing \eqref{eq:GenMatching-RHS} also provides justification of the fact that $\chi_j(i) - \chi_j(i-1)$ has a finite limit as $i \to \infty$ wherever $\beta_j < a-1$. More rigorously, this result could be established by exploiting the assumed differentiability of $\xi$ and considering asymptotic matching between $\xi'(Rn^{-1})$ and $\chi(R) - \chi(R-1)$.


Now, let us assume that there exists some $b_k$ in the range $0 < b_k < a-1$. By matching terms on either side of \eqref{eq:IntermediateAsymptotics-Prelim} and using \eqref{eq:GenMatching-LinOnLHS}, we find that both $b_k$ and $b_k - 1$ must be values taken by exponents $\beta_j$. Since the smallest $\beta_j$ is $\beta_0 = 0$, it follows that $b_k \geq 1$. If $a < 2$, this leads to a contradiction with the requirement that $b_k < a-1$, and we would therefore conclude that there are no exponents $b_k$ in the range $0 < b_k < a-1$.

The case where $a > 2$ is a little more complicated. In order to analyse this problem, we recall from \S\ref{sec:Formal2-BL} that if $0 < \beta_j < a-1$, then $q_j = 0$ implies either that $\chi_j(i)$ has only the trivial solution $\chi_j(i) \equiv 0$, or that $\beta_j$ can be expressed as the sum of other $\beta_{J}$ values (allowing possible repetitions), where all of these other $\beta_J$ are associated with nontrivial solutions for $\chi_J(i)$. Using this result, we show that the only possible $b_k$ with $0 < b_k < a-1$ are the integers. 

For the purposes of contradiction, assume that $a > 2$ and that there exists some smallest noninteger $\theta$ in the range $0 < \theta < a-1$ so that $b_k = \theta$ is associated with a nontrivial solution for $\xi_k$. As noted above, this implies that 
both $\theta-1$ and $\theta$ must be values taken by the exponents $\beta_j$. Now, consider the function $\chi_j(i)$ associated with $\beta_j = \theta -1$. Since this has a nontrivial solution, it follows that either $q_j \neq 0$ or that $\theta - 1$ can be expressed as the sum of $\beta_J$ values associated with nontrivial solutions for $\chi_J$. However, $q_j \neq 0$ would imply that the matching table contains a term of the form $q_j R n^{-\theta}$, which must correspond to $\theta-1$ being a value taken by one of the $b_k$; this would be a contradiction with the assumption that $\theta$ is the smallest noninteger value of $b_k$. Similarly, if $\theta - 1$ can be expressed as a sum of $\beta_J$ values, at least one of these must be noninteger, which would also lead to a contradictory noninteger value of $b_k$ less than $\theta$. 

For $a \neq 2$, we therefore find that the solution in the bulk region takes the form
\begin{equation*}
  \xi(s) = s + \sum_{k=1}^{\ceil{a-2}} p_k ( s - \tfrac12) n^{-k} + n^{-(a-1)} \tilde{\xi}(s) +  o\big(n^{-(a-1)}\big),
\end{equation*}
where $\tilde{\xi}(s)$ is given in \eqref{eq:Formal2-xiSolnLogs}. Using either intermediate matching (as described above) or Van Dyke's matching criterion, we can use this expression to find the asymptotic behaviour of the functions $\chi_j(i)$ as $i \to \infty$. It follows that the solution in the boundary layer region takes the form
\begin{equation}
\label{eq:Formal2-chiSoln}
  \chi(i) = \chi_0(i) + \sum_{j=1}^{\ceil{a-2}} \chi_j(i) n^{-j} + n^{-(a-1)} \tilde{\chi}(i) +  o\big(n^{-(a-1)}\big).
\end{equation}

Moreover, we can use the matching table to define the asymptotic behaviour of $\chi_j(i)$ as $i \to \infty$ in terms of the constants $p_k$ and $\tilde{p}$ from the solution in the bulk region. Specifically, we find that the asymptotic behaviour of $\chi_j(i)$ for large $i$ and $a > 2$ is given by
\begin{equation}
 \label{eq:chij-LargeiAsymp-a>2}
 \chi_j(i) = 
 \begin{cases}
    i - \tfrac{p_1}{2} + \frac{1 }{\zeta(a) (-a+2)_4}i^{-(a-2)} + o(i^{-(a-2)}), & j = 0; \\
    p_{j} i - \tfrac{p_{j+1}}{2} + O\big(i^{-(a-2-j)}\big), & 0 < j < a-2; \\
    p_{j} i - \left(\frac{1}{\zeta(a) (-a+2)_4} + \frac{\tilde{p}}{2} \right) + o(1), & j = a-2; \\
    p_{j} i + O\big(i^{-(a-2-j)}\big), & a-2 < j < a-1,
 \end{cases}
\end{equation}
The behaviour of $\tilde{\chi}(i)$ for large $i$ is given by
\begin{equation*}
 \tilde{\chi}(i) = {\textstyle \left(\frac{1}{\zeta(a) (-a+1)_3} + \tilde{p}\right)} i + o(i).
\end{equation*}

These expressions enable us to define the constants $p_j$ based on the solutions obtained for $\chi_j(i)$.
For $1 \leq j < a-1$, we see that
\begin{equation*}
 p_j = 2\lim_{i \to \infty} \left[p_{j-1} i - \chi_{j-1}(i) \right],
\end{equation*}
where we take $p_0 = 1$. If $a$ is an integer, we also find that
\begin{equation*}
 \tilde{p} =  2\lim_{i \rightarrow \infty} \left[p_{a-1} i - \chi_{a-1}(i) \right] - \frac{2}{\zeta(a) (-a+2)_4}.
\end{equation*}
If $a$ is not an integer, we require that $\tilde{p} =  -\frac{2}{\zeta(a) (-a+2)_4}$. If this were not the case, \eqref{eq:GenMatching-MoreComplicatedOnLHS} would yield an $\ord[R^0 n^{-(a-1)}]$ term on the left hand side of \eqref{eq:IntermediateAsymptotics-Prelim} that could not be balanced by any equivalent term on the right hand side of \eqref{eq:IntermediateAsymptotics-Prelim} without contradicting the result that $\chi$ has an expansion of the form given in \eqref{eq:Formal2-chiSoln}.

In the case where $a < 2$, we recall that $\xi(s)$ must take the form
\begin{equation*}
 \xi(s) = s + n^{-(a-1)} \left[ \frac{s^{-(a-2)} - (1-s)^{-(a-2)}}{\zeta(a) (-a+2)_4} + (s - \tfrac12)\tilde{p} \right] +  o\big(n^{-(a-1)}\big).
\end{equation*} 
By the same argument as above for noninteger $a$ when $a > 2$, we find that $\tilde{p} =  -\frac{2}{\zeta(a) (-a+2)_4}$ also when $a < 2$. Hence, we find from \eqref{eq:Formal2-chiSoln} that $\chi(i) = \chi_0(i) + n^{-(a-1)}\tilde{\chi}(i)$ and that the asymptotic behaviours of these functions are given by
\begin{equation}
\label{eq:chi0-expansion-a<2}
 \chi_0(i) \sim i + {\textstyle \frac{1 }{\zeta(a) (-a+2)_4}i^{-(a-2)}} + o(i^{-(a-2)}),
\end{equation}
and
\begin{equation*}
 \tilde{\chi}(i) = {\textstyle  \frac{-a}{\zeta(a)(-a+2)_4}} i + o(i).
\end{equation*}

In the case where $a = 2$, the logarithm in \eqref{eq:Formal2-xiSolnLogs} requires careful handling, and we find that some additional terms that are logarithmically large in $n$ need to be introduced. This makes it more difficult to construct a matching table, but the arguments described above can still be used with some modifications. 
Ultimately, we find that we can account for all logarithmic terms using the expansions
\begin{equation*}
 \xi(s) = s + n^{-1} (\log n) (s - \tfrac12)\tilde{p}^* + n^{-1} \left[ \frac{\log (1-s) - \log(s) }{\pi^2} + (s - \tfrac12)\tilde{p}\right] + o(n^{-1})
\end{equation*}
and
\begin{equation}
\label{eq:Formal:a2:logterm}
 \chi(i) = \chi_0(i) + n^{-1} (\log n) \tilde{\chi}^*(i) + n^{-1} \tilde{\chi}(i) + o(n^{-1}).
\end{equation}
Matching between the bulk and the boundary layer can then be achieved by setting $\tilde{p}^* = \frac{2}{\pi^2}$, and taking
\[
 \tilde{p} = 2 \lim_{i \rightarrow \infty}  \left[ i - \chi_{0}(i) \right].
\]
While we have concentrated on obtaining terms up to $\ord[n^{-(a-1)}]$ in our expansions of both $\xi$ and $\chi$, it may be noted that further high order terms can also be obtained using the techniques of matched asymptotic expansions. However, obtaining these high-order terms becomes much more algebraically laborious. In \S\ref{sec:Formal2-bulk}, we commented that finding higher-order corrections requires us to expand $S_0$ in \eqref{eq:ForceBalance-PowerLaw-ApplyAnsatzes} and exploiting the properties of $\chi(i)$. In the same way, obtaining higher order corrections in the boundary layer would require us to expand $S_5$ and $S_6$ in \eqref{eq:Formal2-BLForceBalance1} and exploit the properties of $\xi(s)$. Additionally, we find that the high order solutions for $\xi_k$ are no longer as simple as the expressions obtained when $b_k < a-1$, which causes the matching table to become much more complicated.

As described in this section, formal asymptotic methods can be used to elucidate the structure of the original discrete problem and determine the appropriate scalings for higher-order asymptotic analsyis. By the principles of matched asymptotic expansions, we use information about the behaviour of the bulk solution to construct the boundary layer solution and vice versa; this is where formal asymptotic analysis becomes particularly useful. For example, our higher-order analysis of $\xi$ gives us detailed information about the decay properties of $\chi_0$. Indeed, combining \eqref{eq:chij-LargeiAsymp-a>2}, \eqref{eq:chi0-expansion-a<2}, and \eqref{eq:Formal:a2:logterm}, we obtain the decay properties of $\chi_0$ as given by \eqref{for:chi0:result}, from which \eqref{intro:eqn:decay} follows. 

In the general case where $V$ satisfies \eqref{eq:GenPotential-Derivs}, the coefficients of various terms change but the structure of the asymptotic matching remains identical up to $\ord(n^{-(a-1)})$. Hence, we also find that the solution for $\tilde{\xi}$ given in \eqref{eq:Formal2-xiSolnLogs-General} can be used to obtain information about the decay behaviour of $\chi_0$ for a general $V$. From this, we find that we can generalise \eqref{intro:eqn:decay} to obtain \eqref{intro:eqn:decay2}.

\section{Asymptotic development of the ground state energy}
\label{sec:variational}

This section is devoted to the statement and proof of Theorem~\ref{thm:Gamma:conv}, which demonstrates $\Gamma$--convergence of the functional $E_n^1$ defined in \eqref{intro:eq:ene:expansion}: for an
introduction to the method of $\Gamma$--convergence, we refer
the reader to \cite{Braides02} or \cite{DalMaso93}. As stated in
\S\ref{sec:Results}, to establish these results we make the stronger decay assumption {\bf(Dec+)} in addition to the basic assumptions detailed in \S\ref{sec:Setting} throughout this section.

We begin in \S\ref{ssec:BL:ene} by reformulating the minimisation problem for \eqref{intro:eq:ene:expansion} in terms of the variable $\eps$ as introduced in \eqref{eq:defn:eps}. In \S\ref{ssec:key:ests}, we then establish key estimates used in the proof of our Theorem~\ref{thm:Gamma:conv}, which is then stated and proved in \S\ref{ssec:Gconv}. In \S\ref{ssec:minz:plus:EL:eqn} we apply Theorem \ref{thm:Gamma:conv} to prove that solutions to the force balance \eqref{eq:equilibrium} converge to solutions to the boundary layer equation \eqref{intro:eqn:BL:ell2}. In \S\ref{ssec:a:smaller:3over2} we then argue that {\bf(Dec+)} is a natural condition for the methods we use here, and that additional ideas are required to obtain a result assuming only {\bf(Dec)}, or a yet weaker decay hypothesis. Some technical computations required for the proof of Theorem~\ref{thm:Gamma:conv} are left until \S\ref{app:sigman:computs}.

\subsection{Reformulation}
\label{ssec:BL:ene}
Let $\bar{x}(i):=\frac{i}{n}$ be the equispaced configuration. 
We reinterpret \eqref{eq:defn:eps} as
\begin{equation*} 
  \epsilon(i) := n \bighaa{ \big[x(i) - x(i-1)\big] - \big[\bar x(i) - \bar x(i-1)\big]}
  \qquad\text{ for }i=1,\ldots,n.
\end{equation*}
Figure \ref{fig:Tom:coos} illustrates the definition of 
$\epsilon(i)$ as the difference between the blown--up
perturbations of the positions $x(i)$ relative to the reference 
equispaced configuration $\bar{x}(i)$ for $i = 0, \ldots, n$. We interpret $\epsilon(i)$ as a strain variable, since it
expresses the local change in distance between particles away from the equispaced configuration. Since $x(0)=0$, the inverse transformation is given by
$x(i) = \frac1n [i + \sum_{j=1}^i \epsilon(i) ]$,
and we obtain $\sum_{i=1}^n \epsilon(i) = n\,x(n) - n = 0$.

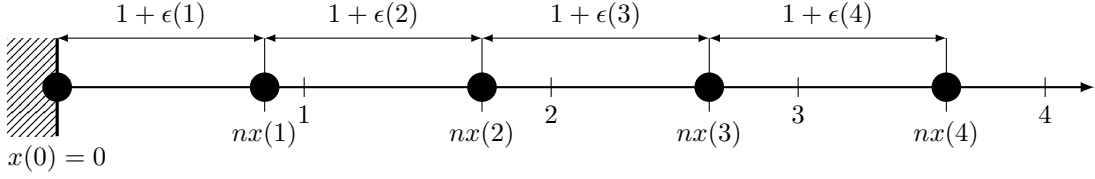
\begin{figure}[t]
\centering
\begin{tikzpicture}[scale=0.65, >= latex]
	\draw (0,-1) node [below] {$x(0) = 0$}; 
	\draw[very thick] (0,-1) -- (0,1);
	\draw[->, thick] (0,0) -- (21,0); 
	\fill[pattern = north east lines] (-1, -1) rectangle (0, 1);	
	\draw (5, 0.2) -- (5, -0.2) node [below] {$1$};
	\draw (10, 0.2) -- (10, -0.2) node [below] {$2$};
	\draw (15, 0.2) -- (15, -0.2) node [below] {$3$};
	\draw (20, 0.2) -- (20, -0.2) node [below] {$4$};
    \foreach \x in {0,4.2,8.6,13.2,18}
      {
      \fill (\x, 0) circle (0.3);
      }
    \draw[<->] (0,1) -- (4.2, 1) node[midway, above] {$1 + \epsilon(1)$};
    \draw (4.2, -0.5) node [below] {$n x(1)$} -- (4.2, 1);
    \draw[<->] (4.2,1) -- (8.6, 1) node[midway, above] {$1 + \epsilon(2)$};
    \draw (8.6, -0.5) node [below] {$n x(2)$} -- (8.6, 1);
    \draw[<->] (8.6,1) -- (13.2, 1) node[midway, above] {$1 + \epsilon(3)$};
    \draw (13.2, -0.5) node [below] {$n x(3)$} -- (13.2, 1);
    \draw[<->] (13.2,1) -- (18, 1) node[midway, above] {$1 + \epsilon(4)$};
    \draw (18, -0.5) node [below] {$n x(4)$} -- (18, 1);
\end{tikzpicture}
\caption{Zoom-in of the particle system from Figure \ref{fig:PS:bdd} at the boundary layer. Both choices of variables given by $n x(i)$ and $\epsilon(i)$ are illustrated.}\label{fig:Tom:coos}
\end{figure}


Expressing the energy difference $E_n^1$ in \eqref{intro:eq:ene:expansion} in $\epsilon$, we obtain
\begin{gather}\label{for:defn:En1}
  E^1_n(\epsilon):=\sum_{k=1}^n \sum_{j=0}^{n-k} 
  \bigg[V\bigg(k+\sum_{l=j+1}^{j+k} \epsilon(l)\bigg)-V(k)\bigg],
  \quad\Dom E_n^1 
  = \bigaccv{ \epsilon \in [-1, n]^n }{ \epsilon \cdot \mathbf 1 = 0 }.
\end{gather}
We note that the double sum over $V(k)$ equals $E_n( \bar x )$. We make three basic observations:
\begin{enumerate}
  \item Since the change of variable given above is a bijection
    from $\Dn$ to $\Dom E_n^1$ and $E_n$ has a unique minimiser in the
    interior of $\Dn$, it follows that $E_n^1$ has a unique 
    minimiser in the interior of $\Dom E_n^1$.
  \item Viewing $\epsilon$ as the perturbation to the 
    distances between particles away from unit spacing, we expect 
    $\epsilon(i)\approx 0$ for $i\approx\frac{n}{2}$, which is 
    equivalent to the fact that far from
    the boundary, the distances between particles are close to 
    $1$.
  \item By the symmetry in the geometry of the double pile-up, the 
    minimiser of $E_n^1$ has \emph{reversal symmetry}, i.e.~$\epsilon(i) = \epsilon(n+1-i)$. The reversal symmetry of the minimiser is easily proved from the strict convexity of $E_n$. We introduce the following notation for `reversing' a sequence:
      \begin{equation} \label{for:defn:reversal:epsilon}
      \text{for } \epsilon \in \Dom E_n^1, \text{ let } \cev \epsilon(i) :=
        \epsilon(n+1-i).
    \end{equation}
    It is easy to check that $\cev \epsilon \in \Dom E_n^1$, and that $E_n^1 (\cev \epsilon) = E_n^1 (\epsilon) \geq E_n^1 ( (\epsilon + \cev \epsilon)/2 )$.
\end{enumerate}

%
%

\subsection{Structure of $E_n^1$ and key estimates}
\label{ssec:key:ests}

In order to prove a $\Gamma$--convergence result, we extend
the definition of $E^1_n$ so that these functionals are
defined over the same topological space. Here, the right space
turns out to be $\ell^2(\N{})$.

To do so, we define the embedding $\iota_n:\Dom 
E^1_n\to\ell^2(\N{})$, where
\begin{equation*}
  \iota_n\epsilon(i) := \begin{cases}
    \epsilon(i) 
    &\text{if } i = 1,\ldots,n, \\
    0
    &\text{otherwise}.
  \end{cases}
\end{equation*}
This permits us to extend $E^1_n$ over $\ell^2(\N{})$ in the 
following manner:
\begin{equation*}
  E^1_n(\epsilon) = \begin{cases}
    E^1_n(\epsilon) &\text{if }\epsilon \in\iota_n(\Dom E^1_n),\\
    +\infty &\text{otherwise.}
  \end{cases}
\end{equation*}

To expose the locally quadratic structure of $E^1_n(\epsilon)$ as
defined in \eqref{for:defn:En1}, we rewrite it by subtracting and adding a term which is linear in $\epsilon$. For any $\epsilon\in\ell^2$ with finite energy, we obtain
\begin{multline} \label{eq:energy:splitting}
 E^1_n(\epsilon)
 = \sum_{k=1}^n
   \sum_{j=0}^{n-k}\Bigg(V\bigg(k+ 
   \sum_{l=j+1}^{k+j}\epsilon(l)\bigg)-V(k)
   -V'(k) \sum_{l=j+1}^{k+j} \epsilon(l)\Bigg) 
   + \sum_{k=1}^n V'(k)\sum_{j=0}^{n-k}
   \sum_{l=j+1}^{k+j}\epsilon(l) \\
 = \underbrace{\sum_{k=1}^n
   \sum_{j=0}^{n-k} \phi_k \bigg(
   \sum_{l=j+1}^{k+j} \epsilon(l) \bigg)}_{=: \Qn (\epsilon)}
   + \bighaa{ \sigma^n, \epsilon + \cev \epsilon }_{\ell^2 (\N{})},
\end{multline}
where 
\begin{gather} \label{for:defn:phik}
  \phi_k (y) 
  := V(k+y)-V(k)-V'(k)y, \\\label{for:defn:sigman}
  \sigma^n(i)
  := \left\{ \begin{aligned}
  		&\sum_{k = i+1}^{n-i} \bigbhaa{ (k-i) \wedge (n-i+1-k) } \bigabs{ V' (k) },
  		&&\text{if } i \leq \lfloor n/2 \rfloor. \\
        &0, 
        &&\text{otherwise}.
  \end{aligned} \right.
\end{gather}
The second equality in \eqref{eq:energy:splitting} follows from changing the order of summations and using the fact that
$(\epsilon, \mathbf 1)_{\ell^2(\N{})} = 0$; the details of this
computation are provided in Appendix \ref{app:sigman:computs}. The function $\phi_k$ is the error of the first order Taylor expansion of $V(x)$ around $x = k$, expressed in the shifted variable $y := x - k$. We interpret $\sigma^n$ as a stress which arises due to 
the constraint that the particles are confined to lie in a finite
interval.

Lemma \ref{lem:lb_single_term} states precisely what we mean by $\Qn$ being `locally quadratic'; it provides a quadratic lower and upper bound for $\phi_k$. Both bounds are essential in the proof of Theorem  \ref{thm:Gamma:conv}. Figure \ref{fig:phik} illustrates $\phi_k$ together with the lower and upper bound. The proof of Lemma \ref{lem:lb_single_term} is a direct consequence of {\bf(Cvx)}, i.e.~the strict convexity of $V$.

\begin{figure}[ht]
\centering
\begin{tikzpicture}[scale=1.8, >= latex]
\draw[dotted] (-2,3.2) -- (-2,0) node[below] {$-k$};           
\draw[->] (-3,0) -- (4,0) node[right] {$y$};
\draw[->] (0,0) node[below] {$0$} -- (0,3.2);
\draw[thick, domain=-1.6:4, smooth] plot (\x,{2/(\x + 2) - 1 + \x/2});
\draw[domain=-1:1, smooth] plot (\x,{3.2*\x*\x});
\draw[domain=1:4, smooth] plot (\x,{4*(\x - 0.5)/27});
\draw[domain=-2:1, smooth] plot (\x,{2*\x*\x / 27});
\draw[dotted] (4, 1.25) -- (1.5, 0);
\draw (3, 0.75) -- (3.5, 0.75) -- (3.5, 1) node[midway, anchor = west] {$V'(k)$};
\draw (4, 4/3 ) node[anchor = west] {$\phi_k$};
\draw (4, 14/27 ) node[anchor = west] {$\Phi_k$};
\draw (1,3.2) node[anchor = north west] {$\dfrac{C_\delta}{k^{a+2}} y^2$};
\end{tikzpicture} \\
\caption{The function $\phi_k$ as defined in Lemma \ref{lem:lb_single_term}.}
\label{fig:phik}
\end{figure}
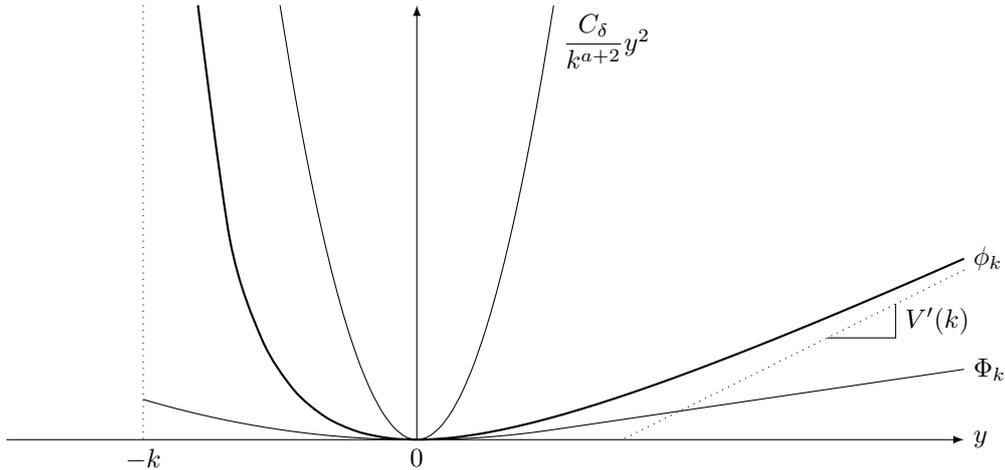

\begin{lem}[Lower bound on $V$]
\label{lem:lb_single_term}
For any $k\in[1,+\infty)$, it holds for all
$y\in(-k,+\infty)$ that
\begin{align*}
  \phi_k (y) 
  &\geq \Phi_{k}(y) := \begin{cases}
    \textstyle \frac12\lambda(k+1)\,y^2 
    & y\in(-k,1],\\
    \lambda(k+1) \, \bighaa{ y - \frac12 }  
    & y\in[1,+\infty).
  \end{cases}
\end{align*}
Moreover, if $y \geq k (\delta - 1)$ for some $\delta > 0$, then there exists a $C_\delta > 0$ such that $\phi_k (y) \leq C_\delta k^{-a-2} y^2$.
\end{lem}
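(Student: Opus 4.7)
My plan is to exploit the key structural observation that $\phi_k(0) = 0$ and $\phi_k'(0) = V'(k) - V'(k) = 0$, so that Taylor's theorem at the base point $y = 0$ gives the unified representation
\begin{equation*}
  \phi_k(y) = \int_0^y (y-t)\, V''(k+t)\, dt = \tfrac{1}{2} V''(\xi)\, y^2
\end{equation*}
for some $\xi$ lying between $k$ and $k+y$. Both the lower and upper bounds then reduce to bounding $V''$ in the appropriate region.

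For the lower bound on $y \in (-k, 1]$, I note that then $k+t$ ranges over an interval contained in $(0, k+1]$, where {\bf(Cvx)} gives $V'' \geq \lambda(k+1)$. A short case split on the sign of $y$ (rewriting the integral as $\int_y^0 (t-y) V''(k+t)\,dt$ when $y < 0$) then yields $\phi_k(y) \geq \tfrac12 \lambda(k+1)\, y^2$. For the piece on $[1, \infty)$, I would note that $\phi_k'' = V'' \circ (k + \cdot) > 0$, so $\phi_k'$ is non-decreasing; hence it suffices to show $\phi_k'(1) \geq \lambda(k+1)$. This follows immediately from $\phi_k'(1) = \int_k^{k+1} V''(s)\, ds$ and {\bf(Cvx)}. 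Integrating $\phi_k'(y) \geq \lambda(k+1)$ from $1$ to $y$ and using $\phi_k(1) \geq \tfrac12 \lambda(k+1)$ from the first case produces $\phi_k(y) \geq \lambda(k+1)(y - \tfrac12)$, matching $\Phi_k$ piecewise (the two bounds agree at $y = 1$, so the resulting function is continuous as required).

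For the upper bound, I would use the Lagrange remainder form and locate $\xi$. The hypothesis $y \geq k(\delta - 1)$ is equivalent to $k + y \geq k\delta$; whether $y \geq 0$ or $y < 0$, this forces $\xi \in [k \min(1, \delta), \infty)$. Setting $\delta' := \min(1, \delta) \in (0, 1]$, we have $\xi \geq k \delta' \geq \delta'$ since $k \geq 1$, so {\bf(Dec)} applied at level $\delta'$ gives
\begin{equation*}
  V''(\xi) \leq c_{\delta'} \xi^{-a-2} \leq c_{\delta'} (k\delta')^{-a-2} = c_{\delta'} (\delta')^{-a-2}\, k^{-a-2}.
\end{equation*}
Substituting into the Taylor formula gives the claim with $C_\delta := \tfrac12 c_{\delta'} (\delta')^{-a-2}$.

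The proof is essentially routine once the right pointwise formula is chosen; the only delicate point is the upper bound in the regime $\delta < 1$ and $y < 0$, where one must verify that the Lagrange point $\xi$ stays uniformly bounded away from $0$ in a way that $k^{-a-2}$ (and not merely $(k\delta)^{-a-2}$) governs the final estimate — this is precisely where combining the crude bound $\xi \geq k \delta'$ with $k \geq 1$ is essential in order to invoke {\bf(Dec)} with a $\delta$-independent constant outside of $c_{\delta'}$.
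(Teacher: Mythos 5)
Your proof is correct and follows essentially the same route as the paper's. The paper packages the lower bound as an abstract second-order comparison lemma ($f(0)=g(0)$, $f'(0)=g'(0)$, $f''\geq g''$ implies $f\geq g$), applied with $g = \Phi_k$; your version integrates the pointwise bound $\phi_k''(y) = V''(k+y) \geq \lambda(k+1)$ twice explicitly, which is the same argument in a different wrapper. The upper bound is the same Lagrange-remainder-plus-{\bf(Dec)} computation. One small point in your favour: by introducing $\delta' := \min(1,\delta)$ you correctly address the case $\delta > 1$, where the Lagrange point $\xi\in[0,y]$ need not satisfy $\xi > k(\delta-1)$; the paper's estimate $\sup_{y>k(\delta-1)}\phi_k''(y)$ glosses over this, though the conclusion is unaffected since $k+\xi \geq k \geq 1$ always holds.
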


\begin{proof}
The proof relies on the following observation. For any $f, g \in C^2 (\R{})$ satisfying $f(0) = g(0)$, $f' (0) = g' (0)$, and $f'' \geq g''$ on some interval $I \ni 0$, then $f \geq g$ on $I$. This is easily proven from the fact that $f - g$ is convex with $0 = (f - g)(0) = (f - g)'(0)$.

Since $V(x)$ is $\lambda(x)$-convex in the sense of {\bf(Cvx)}, it
follows that $\phi_k(y)$ is $\lambda(y + k)$-convex. Since $\lambda$ is decreasing, the lower bound of $\phi_k$ in Lemma \ref{lem:lb_single_term} follows.

Since $V \in C^2 (0, \infty)$ and $V''(x) \lesssim 1/x^{a + 2}$, it holds that 
$$
\sup_{y > k (\delta - 1)} \phi_k'' (y) 
= \sup_{x > k \delta} V'' (x) 
\leq \frac{2 C}{(\delta k)^{a+2}} 
$$ 
is finite for any fixed $\delta > 0$. The upper bound for $\phi_k$ follows.
\end{proof}

The linear term in \eqref{eq:energy:splitting} is fully characterized by $\sigma^n$. Lemma~\ref{lem:sigman} states its key properties. Its proof relies on the decay property $|V'(x)| \lesssim x^{-a-1}$ in \eqref{for:ests:V:Vprime} with $a > 3/2$;
in fact, this is the key point at which the assumption
{\bf($a$-Dec)} is necessary for our continuing analysis.

\begin{lem}[Properties of $\sigma^n$]
\label{lem:sigman}
$\sigma^n \in \ell^2 (\N{})$ as defined in \eqref{for:defn:sigman} satisfies
\begin{equation} \label{for:defn:sigma:infty}
  0 
  \leq \sigma^n(i)
  \leq \sigma^\infty(i)
  := \sum_{k = i+1}^\infty (k-i) \bigabs{ V' (k) },
  \quad \text{for all } i \geq 1.
\end{equation}
Moreover, $\sigma^n \to \sigma^\infty$ in $\ell^2 (\N{})$ as $n \to \infty$.
\end{lem}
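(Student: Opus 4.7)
The plan is to prove the three claims of the lemma in the order they appear: first the pointwise bounds $0\leq \sigma^n(i)\leq \sigma^\infty(i)$, then the fact that $\sigma^\infty\in\ell^2(\N{})$, and finally the $\ell^2$-convergence, via dominated convergence built on the first two steps.

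The pointwise bound is immediate from the definition \eqref{for:defn:sigman}: for $i>\lfloor n/2\rfloor$ both $\sigma^n(i)=0$ and the assertion is trivial, while for $i\leq\lfloor n/2\rfloor$ every summand is non-negative (giving the lower bound) and we have $(k-i)\wedge(n-i+1-k)\leq k-i$, so that
\[
  \sigma^n(i)\leq \sum_{k=i+1}^{n-i}(k-i)|V'(k)|\leq \sum_{k=i+1}^{\infty}(k-i)|V'(k)|=\sigma^\infty(i).
\]

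The key technical step is establishing $\sigma^\infty\in\ell^2(\N{})$, which is precisely where the strengthened hypothesis {\bf(Dec+)} with $a>\tfrac32$ enters. Using the estimate $|V'(k)|\leq c_\delta' k^{-a-1}$ from \eqref{for:ests:V:Vprime} and reindexing via $j=k-i$, one obtains
\[
  \sigma^\infty(i)\;\lesssim\;\sum_{j=1}^\infty j(j+i)^{-a-1}\;\lesssim\;\int_0^\infty x(x+i)^{-a-1}\,dx\;=\;C\,i^{1-a},
\]
after rescaling $x=iy$ and using that $\int_0^\infty y(y+1)^{-a-1}dy$ converges for $a>1$. Squaring gives $\sigma^\infty(i)^2\lesssim i^{-2(a-1)}$, which is summable exactly when $2(a-1)>1$, i.e.\ $a>\tfrac32$. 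This is the main, though mild, obstacle and clarifies the necessity of {\bf(Dec+)} here.

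For the $\ell^2$-convergence, I would combine pointwise convergence with dominated convergence. Pointwise: fix $i\geq 1$. For any $n\geq 2i$ and any $K\geq i+1$ with $2K\leq n+1$, we have $(k-i)\wedge(n-i+1-k)=k-i$ for all $k\in[i+1,K]$, so $\sigma^n(i)\geq \sum_{k=i+1}^K(k-i)|V'(k)|$; letting $n\to\infty$ and then $K\to\infty$ gives $\liminf_n\sigma^n(i)\geq\sigma^\infty(i)$, which together with the upper bound already proved yields $\sigma^n(i)\to\sigma^\infty(i)$. Dominated convergence: since $0\leq\sigma^n(i)\leq\sigma^\infty(i)$, we have $|\sigma^n(i)-\sigma^\infty(i)|^2\leq \sigma^\infty(i)^2$, and the majorant $\sigma^\infty(\,\cdot\,)^2$ is summable by the previous step. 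Applying the dominated convergence theorem on $\N{}$ with the counting measure gives $\|\sigma^n-\sigma^\infty\|_{\ell^2(\N{})}\to 0$, concluding the proof.
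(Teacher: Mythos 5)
Your proof is correct, but it takes a genuinely different route from the paper's. The paper never separately proves $\sigma^\infty\in\ell^2(\N{})$ or pointwise convergence; instead it estimates $\|\sigma^\infty-\sigma^n\|_{\ell^2(\N{})}^2$ directly. Setting $R_n=\lfloor n/2\rfloor$ and observing that the summands defining $\sigma^n(i)$ and $\sigma^\infty(i)$ coincide for $k\leq R_n$, the paper bounds the tail of the difference for $i<R_n$ by $\sum_{k>R_n}(k-i)|V'(k)|$, compares with integrals, and arrives at the quantitative rate $\|\sigma^\infty-\sigma^n\|_{\ell^2}^2\lesssim n^{-(2a-3)}$. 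You instead establish the decay bound $\sigma^\infty(i)\lesssim i^{1-a}$ to get $\sigma^\infty\in\ell^2$, prove pointwise convergence $\sigma^n(i)\to\sigma^\infty(i)$ by a squeeze argument, and then invoke dominated convergence with majorant $\sigma^\infty(\cdot)^2$. The constraint $a>\tfrac32$ from \textbf{(Dec+)} enters both arguments, but through different gates: for you it is the summability of $i^{-2(a-1)}$, for the paper it is the positivity of the exponent $2a-3$ in the convergence rate. Your argument is a bit softer and more modular; the paper's is more direct and, as a bonus, delivers an explicit algebraic rate of convergence that a dominated-convergence argument does not.
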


\begin{proof}
\eqref{for:defn:sigma:infty} follows from the definitions viz.
\begin{equation} \label{for:lb:T4}
  \sigma^n(i)
  \leq \sum_{k = i+1}^{n-i} (k-i) \bigabs{ V' (k) }
  \leq \sum_{k = i+1}^\infty (k-i) \bigabs{ V' (k) }
  = \sigma^\infty(i).
\end{equation}
For the convergence in $\ell^2 (\N{})$, we set $R_n := \lfloor n/2 \rfloor$ and note that the summands in \eqref{for:defn:sigman} and \eqref{for:defn:sigma:infty} are equal for $k = i+1, \ldots, R_n$. Since $a > 3/2$ and the decay property $|V'(x)| \lesssim x^{-a-1}$ in \eqref{for:ests:V:Vprime}, we obtain the convergence from
\begin{align*} 
  \big\| \sigma^\infty - \sigma^n \big\|_{\ell^2(\N{})}^2
  &\leq \sum_{i=1}^{R_n - 1} \bigg(\sum_{k = R_n+1}^\infty (k-i) \bigabs{ V'(k) } \bigg)^2 
     + \sum_{i = R_n}^\infty \bigg(\sum_{k = i + 1}^\infty (k-i) \bigabs{ V' (k) } \bigg)^2\\ 
  &\lesssim \sum_{i=1}^{R_n-1}\bigg(\int_{R_n}^\infty \frac{x-i}{x^{a+1}}
   \mathrm{d}x\bigg)^2
     + \sum_{i = R_n}^\infty \bigg(\sum_{k = i + 1}^\infty k \frac1{k^{a+1}} \bigg)^2 \\ 
  &\lesssim \sum_{i-1}^{R_n-1} \frac{(R_n - i)^2}{R_n^{2a}} 
     + \sum_{i = R_n}^\infty \frac1{i^{2(a-1)}}
  \lesssim \frac1{n^{2a - 3}} \xrightarrow{n \to \infty} 0.
  \qedhere
\end{align*}
\end{proof}

\subsection{Main result: $\Gamma$--convergence}
\label{ssec:Gconv}

We prove $\Gamma$--convergence in the weak topology of $\ell^2 (\N{})$. To accommodate for a splitting of $\epsilon^n \in \Dom E_n^1$ to account for the boundary layer at the left and right barrier separately, we introduce the following notation:
\begin{equation*}
  \epsilon^{n, 1/2}(i)
  := \begin{cases}
    \epsilon^n(i)
    &\text{if } i = 1,\ldots, \lceil n/2 \rceil, \\
    0
    &\text{otherwise,}
  \end{cases}
  \quad \text{and} \quad
  \cev \epsilon^{n, 1/2}(i)
  := \begin{cases}
    \cev \epsilon^n(i)
    &\text{if } i = 1,\ldots, \lceil n/2 \rceil, \\
    0
    &\text{otherwise,}
  \end{cases}
\end{equation*}
We will also write
\begin{align*}
    \epsilon^n \xrightarrow 2 (\epsilon, \cev \epsilon)
    \, \Leftrightarrow \,
    \lracc{ \begin{array}{c}
    \epsilon^{n, 1/2} \to \epsilon \\
    \cev \epsilon^{n, 1/2} \to \cev \epsilon
    \end{array} }
    \text{ in } \ell^2 (\N{}),
    \quad\text{and}\quad
    \epsilon^n \xweakto 2 (\epsilon, \cev \epsilon)
    \, \Leftrightarrow \,
    \lracc{ \begin{array}{c}
    \epsilon^{n, 1/2} \weakto \epsilon \\
    \cev \epsilon^{n, 1/2} \weakto \cev \epsilon
    \end{array} }
    \text{ in } \ell^2 (\N{}).
\end{align*}
We remark that the reversal of a sequence \eqref{for:defn:reversal:epsilon} is only well-defined for sequences that are equivalent to finite dimensional vectors (i.e.~sequences which have finite support). Therefore, in the definition above, there need not be any relation between $\epsilon$ and $\cev \epsilon$, while $\cev \epsilon^n(i) = \epsilon^n(n+1-i)$.

Theorem \ref{thm:Gamma:conv} states that the $\Gamma$--limit of $E_n^1$ is given by 
\begin{gather} \notag 
  \Dom E^1_\infty 
  := \bigaccv{ (\epsilon, \cev \epsilon) \in \ell^2 (\N{}) \times \ell^2 (\N{}) }{ \epsilon(i), \cev \epsilon(i) \geq -1 \: \: \forall \, i \geq 1 }, \\\label{for:defn:Einfty}
  E^1_\infty(\epsilon, \cev \epsilon) 
  := \underbrace{ \sum_{k=1}^\infty\sum_{j=0}^\infty \phi_k \bigg(\sum_{l = j+1}^{k+j} \epsilon(l)\bigg) }_{ =: Q_\infty } 
  + \underbrace{ \sum_{k=1}^\infty\sum_{j=0}^\infty \phi_k \bigg(\sum_{l=j+1}^{k+j} \cev \epsilon(l)\bigg)  }_{ =: \cev Q_\infty } 
  + \bighaa{ \sigma^\infty, \epsilon + \cev \epsilon }_{\ell^2(\N{})}, 
\end{gather}
where $\phi_k$ as in \eqref{for:defn:phik} and $\sigma^\infty$ as in \eqref{for:defn:sigma:infty}.

\begin{thm}[$\Gamma$--convergence of $E_n^1$] \label{thm:Gamma:conv}
If $E^1_n(\epsilon^n)$ is uniformly bounded in $n$ for some sequence $(\epsilon^n) \subset \ell^2 (\N{})$, then $\|\epsilon^n\|_{\ell^2}$ is uniformly bounded. Moreover, for any $(\epsilon, \cev \epsilon) \in \Dom E_\infty^1$, it holds that
\begin{subequations}
\label{for:thm:Gamma:conv}
\begin{alignat}2
\label{for:thm:Gamma:conv:liminf}
&\text{for all } \epsilon^n \xweakto 2 (\epsilon, \cev \epsilon) \text{ with }  \epsilon^n \in \Dom E_n^1 , 
&\liminf_{n\to\infty} E^1_n(\epsilon^n) &\geq E^1_\infty (\epsilon, \cev \epsilon), \\
&\text{there exists }\epsilon^n \xto 2 (\epsilon, \cev \epsilon) \text{ with }  \epsilon^n \in \Dom E_n^1 \text{ such that}\quad 
& \limsup_{n\to\infty} E^1_n(\epsilon^n) &\leq E^1_\infty (\epsilon, \cev \epsilon).
\label{for:thm:Gamma:conv:limsup}
\end{alignat}
\end{subequations}
\end{thm}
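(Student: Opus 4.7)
The plan is to work directly with the splitting $E_n^1 = Q_n + (\sigma^n, \epsilon + \cev\epsilon)_{\ell^2(\N{})}$ established in \S\ref{ssec:key:ests} and analyse the quadratic part $Q_n$ and the linear part separately, deploying Lemma~\ref{lem:lb_single_term} (the pointwise bounds on $\phi_k$) for the former and Lemma~\ref{lem:sigman} (the convergence $\sigma^n \to \sigma^\infty$ in $\ell^2$) for the latter. Throughout, I will exploit that $\sigma^n$ is supported in $\{1,\ldots,\lfloor n/2\rfloor\}$, which nicely aligns with the splitting of $\epsilon^n$ into the two halves $\epsilon^{n,1/2}$ and $\cev\epsilon^{n,1/2}$.

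For compactness, I would extract the $k=1$ term $\sum_i \phi_1(\epsilon^n(i))$ from $Q_n$ and combine Lemma~\ref{lem:lb_single_term} in two regimes: on $\{|\epsilon^n(i)|\leq 1\}$ the bound $\phi_1 \geq \tfrac{\lambda(2)}{2}\epsilon^2$ controls $\sum_{|\epsilon^n|\leq 1}\epsilon^n(i)^2$ by $Q_n$; on $\{\epsilon^n(i) > 1\}$ the bound $\phi_1 \geq \lambda(2)(\epsilon-\tfrac12)$ controls the number of such indices and simultaneously bounds $\|\epsilon^n\|_{\ell^\infty}$ by $O(Q_n+1)$, so the high-value contribution $\sum_{\epsilon^n>1}\epsilon^n(i)^2 \leq \|\epsilon^n\|_{\ell^\infty} \sum_{\epsilon^n>1}\epsilon^n(i)$ is also controlled. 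The linear term is estimated via Cauchy--Schwarz using $\|\sigma^n\|_{\ell^2} \leq \|\sigma^\infty\|_{\ell^2}$ (Lemma~\ref{lem:sigman}) and $\|\cev\epsilon^n\|_{\ell^2} = \|\epsilon^n\|_{\ell^2}$, after which Young's inequality is used to absorb a small fraction of $\|\epsilon^n\|_{\ell^2}^2$, closing the bootstrap into $\|\epsilon^n\|_{\ell^2} \leq C$.

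For the liminf inequality, I would fix $N \in \N{}$ and decompose $Q_n \geq Q_n^L + Q_n^R$ where $Q_n^L$ runs over $1 \leq k \leq N$ and $0 \leq j \leq N$, while $Q_n^R$ is the symmetric block near the right boundary (which equals $Q_n^L(\cev\epsilon^n)$ after reindexing). These are non-overlapping for $n > 4N$, and the remaining terms are nonnegative. Weak convergence in $\ell^2(\N{})$ implies pointwise convergence $\epsilon^{n,1/2}(l)\to\epsilon(l)$, so the finitely many arguments of $\phi_k$ in $Q_n^L$ converge; lower semicontinuity of $\phi_k$ (from convexity of $V$) then yields $\liminf_n Q_n^L(\epsilon^n) \geq \sum_{k,j \leq N} \phi_k(\sum_{l=j+1}^{j+k}\epsilon(l))$, and similarly for $Q_n^R$ and $\cev\epsilon$. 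Letting $N\to\infty$ via monotone convergence recovers the full $Q_\infty + \cev Q_\infty$. For the linear part, because $\sigma^n$ vanishes on $\{i > \lfloor n/2\rfloor\}$, we have $(\sigma^n,\epsilon^n) = (\sigma^n,\epsilon^{n,1/2})$, and strong-weak pairing together with $\sigma^n \to \sigma^\infty$ in $\ell^2$ delivers the limit.

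For the recovery sequence, I would first treat $(\epsilon,\cev\epsilon) \in \Dom E_\infty^1$ with finite support in $\{1,\ldots,N\}$: set $\epsilon^n(i) = \epsilon(i)$ for $i \leq N$, $\epsilon^n(i) = \cev\epsilon(n+1-i)$ for $i > n-N$, and $\epsilon^n(i) = -r_N/(n-2N)$ on the middle block, where $r_N = \sum\epsilon(i) + \sum\cev\epsilon(i)$, so $\epsilon^n \cdot \mathbf 1 = 0$. The middle correction is $O(1/n)$ and vanishes in $\ell^2$, so the strong two-sided convergence $\epsilon^n \xto{2} (\epsilon,\cev\epsilon)$ is immediate. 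The energy convergence $E_n^1(\epsilon^n) \to E_\infty^1(\epsilon,\cev\epsilon)$ splits into a left part (agreeing with $Q_\infty^L(\epsilon)$ plus corrections vanishing as the middle correction tends to $0$), a symmetric right part, and cross terms with $k > n-2N$, which vanish because Lemma~\ref{lem:lb_single_term} gives $\phi_k(y) \lesssim k^{-a-2}y^2$ and there are $O(N)$ such terms per $k$. The general case is then obtained by diagonalizing with the truncations $(\epsilon \mathbbm{1}_{\{\cdot\leq N\}}, \cev\epsilon \mathbbm{1}_{\{\cdot\leq N\}})$, using the upper bound in Lemma~\ref{lem:lb_single_term} and $\sigma^\infty \in \ell^2$ to show $E_\infty^1$ is continuous along such truncations. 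The main obstacle is the compactness step, where the merely linear lower bound on $\phi_1$ when $\epsilon^n(i) > 1$ forces the bootstrap described above; the recovery step requires some technical bookkeeping to control cross terms, but no fundamental difficulty.
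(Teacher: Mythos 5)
Your liminf argument follows the paper's: weak $\ell^2$ convergence gives convergence of the finite sums $\sum_{l=j+1}^{k+j}\epsilon^{n,1/2}(l)$, lower semicontinuity of $\phi_k$ gives a pointwise liminf, and exhausting the double sum (you by finite truncation plus monotone convergence, the paper by Fatou) recovers $Q_\infty + \cev Q_\infty$; the linear term converges by the strong convergence $\sigma^n \to \sigma^\infty$ in $\ell^2$. Your limsup strategy --- recovery sequences for finitely supported data followed by a diagonalisation over truncations --- is a genuinely different route from the paper's, which constructs the recovery sequence in one shot with a diverging cutoff $S_n$ and verifies convergence term by term; your version needs the continuity of $E^1_\infty$ along $(\epsilon\mathbbm{1}_{\leq N},\cev\epsilon\mathbbm{1}_{\leq N})\to(\epsilon,\cev\epsilon)$, which does hold via the upper bound $\phi_k(y)\lesssim k^{-a-2}y^2$ on $X_\delta$, Jensen, and $\sum_k k^{-a}<\infty$. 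Both routes are valid, and the paper's direct construction is essentially what you would obtain after unwinding your diagonalisation.

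The compactness step, however, contains a genuine gap. You estimate the linear term by the crude Cauchy--Schwarz bound $\lrabs{(\sigma^n,\epsilon^n+\cev\epsilon^n)}\leq 2\lrnorm{\sigma^\infty}{\ell^2}\lrnorm{\epsilon^n}{\ell^2}$ and then attempt to absorb a small multiple of $\lrnorm{\epsilon^n}{\ell^2}^2$ by Young's inequality. But $Q_n$ does not control $\lrnorm{\epsilon^n}{\ell^2}^2$ with a fixed constant: on the set $\{\epsilon^n(i)>1\}$ the lower bound $\Phi_1$ is only linear, and your own estimate $\sum_{\epsilon^n>1}\epsilon^n(i)^2 \leq \lrnorm{\epsilon^n}{\ell^\infty}\sum_{\epsilon^n>1}\epsilon^n(i) \lesssim (1+Q_n)Q_n$ is quadratic in $Q_n$, so it cannot be absorbed into the single power $Q_n$ coming from the splitting $E_n^1 = Q_n + (\sigma^n,\cdot)$. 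Tracing the constants shows that the bootstrap closes only under a smallness assumption on $\lrnorm{\sigma^\infty}{\ell^2}$ (roughly $\lrnorm{\sigma^\infty}{\ell^2} < \lambda(2)/(2\sqrt{2})$), which is not part of the hypotheses. The missing ingredient is the sign structure: $\sigma^n \geq 0$ and $\epsilon^n_+\geq 0$ imply $(\sigma^n,\epsilon^n_+ + \cev\epsilon^n_+)\geq 0$, so this potentially large contribution can simply be dropped from the lower bound, and only the pairing against the negative part $\epsilon^n_-$ remains. Since $\epsilon^n_-\in[0,1]$ (by the hard constraint $\epsilon^n(i)\geq -1$), the quadratic bound $\Phi_1(-y)=\tfrac{\lambda(2)}{2}y^2$ applies there, completing the square against $\sigma^\infty$ controls $\lrnorm{\epsilon^n_-}{\ell^2}$ uniformly, and only afterwards does the linear-plus-quadratic structure of $\Phi_1$ on the positive side yield first an $\ell^\infty$ bound for $\epsilon^n_+$ and then an $\ell^2$ bound. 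The order of operations and the sign considerations are essential; a symmetric Cauchy--Schwarz in the full $\ell^2$ norm does not close.
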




\begin{proof}[Proof of compactness in Theorem \ref{thm:Gamma:conv}]
First we obtain a sufficient lower bound on $\Qn$ in \eqref{eq:energy:splitting}. we use Lemma~\ref{lem:lb_single_term} on the nearest neighbour interactions to estimate
\begin{equation} \label{for:lb:T1}
  \Qn (\epsilon^n)
  = \sum_{k=1}^n
   \sum_{j=0}^{n-k} \phi_k \bigg(
   \sum_{l=j+1}^{k+j} \epsilon(l) \bigg)
  \geq \sum_{j=1}^{n}\Phi_{1}\big(\epsilon(j)\big).
\end{equation}
To obtain a sufficient lower bound of $E_n^1 (\epsilon^n)$ from \eqref{for:lb:T1}, \eqref{for:lb:T4} and $\sigma^\infty \in \ell^2 (\N{})$, we split $\epsilon^n$ into a positive and negative part viz.
\begin{equation*}
  \epsilon^n_+(i) := \epsilon^n(i) \vee 0 \geq 0,
  \qquad \epsilon^n_-(i) := -\bighaa{ \epsilon^n(i) \wedge 0 } \geq 0.
\end{equation*}
Then, we use Lemma \ref{lem:lb_single_term} and $\Phi_1(0) = 0$ to estimate
\begin{align} \nonumber
 E^1_n(\epsilon^n)
 &\geq \sum_{j=1}^n \phi_1 \bighaa{ \epsilon^n(j) } + \bighaa{ \sigma^n, \epsilon^n_+ + \cev \epsilon^n_+ }_{\ell^2 (\N{})} - \bighaa{ \sigma^n, \epsilon^n_- + \cev \epsilon^n_- }_{\ell^2 (\N{})} \\\nonumber
 &\geq \sum_{j=1}^n \Bigbhaa{ \Phi_1 \bighaa{ \epsilon^n_+(j) } 
   + \Phi_1 \bighaa{ -\epsilon^n_-(j) } } - \sum_{j=1}^n \sigma^\infty(j) \bighaa{ \epsilon^n_-(j) + \cev \epsilon^n_-(j) } \\\nonumber
 &\geq \sum_{j=1}^n \Phi_1 \bighaa{ \epsilon^n_+(j) } 
   + \tfrac{\lambda(2)}4 \sum_{j=1}^n \Bigbhaa{ \epsilon^n_-(j)^2 + \bighaa{ \cev \epsilon^n_-(j) }^2 } - \sum_{j=1}^n \sigma^\infty (j) \bighaa{ \epsilon^n_-(j) + \cev \epsilon^n_-(j) } \\\nonumber
 &= \sum_{j=1}^n \Phi_1 \bighaa{ \epsilon^n_+(j) } 
   + \tfrac{\lambda(2)}4 \sum_{j=1}^n \Bigbhaa{ \bighaa{ \epsilon^n_-(j) - \tfrac{2\sigma^\infty(j)}{\lambda(2)}  }^2 + \bighaa{ \cev \epsilon^n_-(j) - \tfrac{2\sigma^\infty(j)}{\lambda(2)} }^2 } - \tfrac2{\lambda(2)} \sum_{j=1}^n  \sigma^\infty(j)^2 \notag\\
 &\geq \sum_{j=1}^n \Phi_1 \bighaa{ \epsilon^n_+(j) } 
       + c \, \bignorm{ \, \epsilon^n_- - \tfrac2{\lambda(2)} \sigma^\infty }{\ell^2(\N{})}^2
       + c \, \bignorm{ \, \cev \epsilon^n_- - \tfrac2{\lambda(2)} \sigma^\infty }{\ell^2(\N{})}^2 
       - C. \label{for:pf:thm:cpness:1}
\end{align}
Hence, since $E_n^1 (\epsilon^n) \leq C$ by hypothesis, we obtain from $\sigma^\infty \in \ell^2 (\N{})$ that $(\epsilon^n_-)$ and $(\cev \epsilon^n_-)$ are uniformly bounded in $\ell^2 (\N{})$.

It remains to show that  $(\epsilon^n_+)$ is uniformly bounded in $\ell^2 (\N{})$. We obtain this from the uniform boundedness of $\sum_{j=1}^n \Phi_1 \bighaa{ \epsilon^n_+(j) }$ by \eqref{for:pf:thm:cpness:1}. Indeed, it follows from the linear growth of $\Phi_1$ that for some positive constant $C'$ we have
\begin{equation*}
  \epsilon^n_+(i) \leq C' \qquad\text{for all }i=1,\ldots,n
  \text{ and all }n\in\N{}.
\end{equation*}
Hence, there exists a constant $c$ (which depends on $C'$) such that
\begin{equation*}
  \Phi_1(x)\geq c x^2\qquad\text{for all }x\in[0,C'],
\end{equation*}
and so
\begin{equation*}
  E^1_n(\epsilon^n)\geq c
  \bigg(\sum_{j=1}^n \bighaa{ \epsilon^n_+(j) }^2\bigg)-C
    =c\|\epsilon^n_+\|_{\ell^2}^2-C,
\end{equation*}
thus implying that $(\epsilon^n_+)$ is uniformly bounded in $\ell^2 (\N{})$.
\end{proof}

\begin{proof}[Proof of \eqref{for:thm:Gamma:conv:liminf}]
Let $\epsilon^n \xweakto 2 (\epsilon, \cev \epsilon)$ such that $E_n^1 (\epsilon^n)$ is bounded. For the second term of $E_n^1 (\epsilon^n)$ in \eqref{eq:energy:splitting}, we use the strong convergence of $\sigma^n$ (see Lemma \ref{lem:sigman}) to obtain
\begin{equation} \label{for:conv:lin:term}
  \bighaa{ \sigma^n, \epsilon^n + \cev \epsilon^n }_{\ell^2(\N{})}
  \xto{n \to \infty} \bighaa{ \sigma^\infty, \epsilon + \cev \epsilon }_{\ell^2(\N{})}.
\end{equation}

We bound $\Qn (\epsilon_n)$ in \eqref{eq:energy:splitting} from below by dropping some terms in the summation. We set $R_n := \lfloor n/2 \rfloor$, and estimate
\begin{equation*}
  \Qn (\epsilon_n) 
  = \sum_{k=1}^n \sum_{j=0}^{n-k} \phi_k \bigg(\sum_{l = j+1}^{k+j} \epsilon^n(l)\bigg)
  \geq \sum_{k=1}^{R_n} \sum_{j=0}^{R_n - k} 
  \bigg[ \phi_k \bigg(\sum_{l = j+1}^{k+j} \epsilon^n(l) \bigg) + \phi_k \bigg(\sum_{l=j+1}^{k+j} \cev \epsilon^n(l)\bigg) \bigg].
\end{equation*}
To pass to the liminf as $n \to \infty$, we use Fatou's Lemma, by which we interpret the double sum as an integral over the lattice $\Np 2$. We focus on the first term in the summand, because the second term involving $\cev \epsilon^{n}$ can be estimated analogously. For the pointwise lower bound (as $n \to \infty$ with $k$ and $j$ fixed) of the summand, we interpret $\sum_{l = j+1}^{k+j} \epsilon^{n}_{l}$ as an inner product of $\epsilon^{n}$ with an $n$-independent sequence consisting of $1$'s and $0$'s. Then the fact that $\epsilon^{n, 1/2} \weakto \epsilon$ implies 
\begin{equation*} 
  \sum_{l = j+1}^{k+j} \epsilon^{n, 1/2}(l)
  \xrightarrow{n \to \infty} \sum_{l = j+1}^{k+j}\epsilon(l)
  \qquad \text{for all } k, j \geq 1.
\end{equation*}
Since Lemma~\ref{lem:lb_single_term} implies that $\phi_k$ is positive and lower semicontinuous, it follows that
\begin{equation*}
  \liminf_{n\to\infty} \phi_k \bigg(\sum_{l = j+1}^{k+j}\epsilon^{n, 1/2}_l\bigg)
  \geq \phi_k \bigg(\sum_{l = j+1}^{k+j}\epsilon(l)\bigg)
  \qquad \text{for all } k, j \geq 1.
\end{equation*}
This shows that the hypotheses of Fatou's Lemma are satisfied, and thus
\begin{equation*} 
  \liminf_{n\to\infty} \sum_{k=1}^n \sum_{j=0}^{n-k} \phi_k \bigg(\sum_{l = j+1}^{k+j} \epsilon^n(l)\bigg) 
  \geq \sum_{k=1}^\infty\sum_{j=0}^\infty 
  \bigg[ \phi_k \bigg(\sum_{l = j+1}^{k+j} \epsilon(l)\bigg) + \phi_k \bigg(\sum_{l=j+1}^{k+j} \cev \epsilon(l)\bigg) \bigg].
  \qedhere
\end{equation*}
\end{proof}

\begin{proof}[Proof of \eqref{for:thm:Gamma:conv:limsup}]
Let $(\epsilon, \cev \epsilon) \in \Dom E_\infty^1$ such that $E_\infty^1 (\epsilon, \cev \epsilon) =: C < \infty$. Then $\epsilon, \cev \epsilon \in \ell^2 (\N{}) \subset \ell^\infty (\N{})$, and 
\begin{equation*}
  E^1_\infty(\epsilon, \cev \epsilon) 
  \geq \phi_1 \bighaa{ \epsilon(i) } + \phi_1 \bighaa{ \cev \epsilon(j)}  
  - \norm{ \sigma^\infty }{\ell^2(\N{})} \norm{ \epsilon + \cev \epsilon }{\ell^2(\N{})}
  \quad \text{for all } i,j \geq 1. 
\end{equation*}
Hence, there exists a $\delta > 0$ such that
\begin{equation*}
    \epsilon, \cev \epsilon     
    \in X_\delta
    := \Bigaccv{ \epsilon\in\ell^2(\N{}) }{ \delta - 1 \leq \epsilon(i) \leq \frac{1}{\delta}
,\text{ for all }i\in\N{} }.
\end{equation*}

Next we construct a recovery sequence. As in \cite{Hudson2013}, we note
that the constraint that $\sum_{i=1}^\infty \epsilon^n(i) = 0$
need not be preserved in the limit as $n\to\infty$. We take this into account by introducing $1 \ll S_n \ll n$ as the index where we match the boundary layer with the bulk. We note that as $n\to\infty$, it holds that $S_n\to\infty$ and $S_n/n\to0$. We further set
\begin{equation} \label{for:defn:un}
  u_n := \sum_{i=1}^{S_n} \epsilon(i),
  \quad \text{and} \quad
  \cev u_n := \sum_{i=1}^{S_n} \cev \epsilon(i).
\end{equation}
We now define the recovery sequence
\begin{equation} \label{for:defn:rec:seq}
  \epsilon^n(i) := \begin{cases}
    {\epsilon}(i) & i\in\{1,\ldots,S_n\},\\
    -\dfrac{u_n + \cev u_n}{n-2S_n} &i\in\{S_n+1,\ldots,n-S_n\},\\
    \cev{\epsilon}(n+1-i) & i\in\{n-S_n+1,\ldots,n\}.
  \end{cases}
\end{equation}
It is easily checked that $\sum_{i=1}^n \epsilon^n(i) = 0$ and  $\epsilon^n(i) \geq -1$ for $n$ large enough, hence $\epsilon^n \in \Dom E_n^1$. 

To show that $\epsilon^n \xto 2 (\epsilon, \cev \epsilon)$, we prove that $\epsilon^{n,1/2} \to \epsilon$ in $\ell^2 (\N{})$, and conclude by an analogous argument that $\cev \epsilon^{n,1/2} \to \cev \epsilon$ in $\ell^2 (\N{})$. To this end, we estimate
\begin{equation} \label{for:rec:seq:conv}
  \norm{ \epsilon - \epsilon^{n,1/2} }{\ell^2 (\N{})}^2
  = \sum_{i=S_n+1}^{ \lceil n/2 \rceil } \hspace{-1mm}\bighaa{ \epsilon(i) + \tfrac{u_n + \cev u_n}{n-2S_n} }^2 
    + \hspace{-2mm}\sum_{i=\lceil n/2 \rceil + 1}^\infty \hspace{-2mm}\epsilon(i)^2
  \leq 2 \sum_{i=S_n+1}^{ \lceil n/2 \rceil } \bighaa{ \tfrac{u_n + \cev u_n}{n-2S_n} }^2 
    + 2 \hspace{-1mm}\sum_{i=S_n+1}^\infty \hspace{-1mm}\epsilon(i)^2.
\end{equation}
The second term in the right-hand side of \eqref{for:rec:seq:conv} converges to $0$ as $n \to \infty$ because $\epsilon \in \ell^2 (\N{})$. To show that the first term in the right-hand side is also small for large $n$, we interpret $u_n$ in \eqref{for:defn:un} as the inner product of $\epsilon$ with a sequence consisting of $1$'s and $0$'s. Applying the Cauchy-Schwartz inequality on this inner product yields
\begin{equation*}
    \sum_{i=S_n+1}^{ \lceil n/2 \rceil } \bigghaa{ \frac{u_n + \cev u_n}{n-2S_n} }^2 
    \leq \frac1{(n-2S_n)^2} \sum_{i=S_n+1}^{ \lceil n/2 \rceil } \bighaa{ 2 \sqrt{S_n} \norm\epsilon{\ell^2 (\N{})} }^2
    \lesssim \frac1{n^2} \frac n2 4 S_n \norm\epsilon{\ell^2 (\N{})}^2
    \xto{ n \to \infty } 0,
\end{equation*}
where we recall that $S_n\ll n$ as $n\to\infty$.
This completes the proof of $\epsilon^n \xto 2 (\epsilon, \cev \epsilon)$.

To establish the limsup inequality \eqref{for:thm:Gamma:conv:limsup}, we observe from the argument leading to \eqref{for:conv:lin:term} that it is enough to focus on $\Qn$ in \eqref{eq:energy:splitting}, since the convergence of
terms involving $\sigma^n$ is implied by the fact that $\epsilon^n \xto 2 (\epsilon, \cev \epsilon)$, as just shown. For convenience, we choose $S_n$ to be even. We split the summation in $\Qn$ into four parts:
\begin{multline} \label{for:ene:splitting:sumterm:limp}
  \Qn (\epsilon^n)
  = \sum_{k=1}^{S_n/2} \Bigg[ \sum_{j=0}^{S_n-k} \phi_k \bigg(\sum_{l = j+1}^{k+j} \epsilon^n(l)\bigg)
       + \sum_{j=S_n-k + 1}^{n - S_n - 1} \phi_k \bigg(\sum_{l = j+1}^{k+j} \epsilon^n(l) \bigg) \\
       + \sum_{j = n-S_n}^{n-k} \phi_k \bigg(\sum_{l = j+1}^{k+j} \epsilon^n(l) \bigg) \Bigg] 
       + \sum_{k = 1 + S_n/2}^n \sum_{j=0}^{n-k} \phi_k \bigg(\sum_{l = j+1}^{k+j} \epsilon^n(l)\bigg).
\end{multline}
The first and third term are constructed to contain only those elements of $\epsilon^n(l)$ which equal either $\epsilon(l)$ or $\cev \epsilon(l)$. Using this observation, we estimate these terms by
\begin{multline*}
    \sum_{k=1}^{S_n/2} \Bigg[ \sum_{j=0}^{S_n-k} \phi_k \bigg(\sum_{l = j+1}^{k+j} \epsilon^n(l)\bigg)
       + \sum_{j = n-S_n}^{n-k} \phi_k \bigg(\sum_{l = j+1}^{k+j} \epsilon^n(l) \bigg) \Bigg] \\
  \leq \sum_{k=1}^\infty\sum_{j=0}^\infty 
  \bigg[ \phi_k \bigg(\sum_{l = j+1}^{k+j} \epsilon(l)\bigg) + \phi_k \bigg(\sum_{l=j+1}^{k+j} \cev \epsilon(l)\bigg) \bigg].
\end{multline*}
Since the right-hand side equals the first two terms of $E_\infty^1$ given by $Q_\infty$ and $\cev Q_\infty$, it remains to show that the second and fourth term in \eqref{for:ene:splitting:sumterm:limp} converge to $0$ as $n \to \infty$. 

We start by proving that the second term is small for large $n$. We observe that it solely contains those elements of $\epsilon^n(i)$ which equal either entries of the tails of $\epsilon$ and $\cev \epsilon$, or equal the (small) constant term in \eqref{for:defn:rec:seq}. For this reason, it turns out to be  enough to bound the second term by employing the quadratic upper bound of $\phi_k$ given by Lemma \ref{lem:lb_single_term} (it applies because of $\epsilon^n \in X_\delta$), and then applying Jensen's inequality. In more detail
\begin{multline*}
   \sum_{k=1}^{S_n/2} \sum_{j=S_n-k + 1}^{n - S_n - 1} \phi_k \bigg(\sum_{l = j+1}^{k+j} \epsilon^n(l) \bigg) 
  \leq \sum_{k=1}^{S_n/2} \sum_{j=S_n-k + 1}^{n - S_n - 1} 
  \frac{C_\delta}{k^{a+2}} k^2 \bigg(\frac{1}{k} \sum_{l = j+1}^{k+j} \epsilon^n(l) \bigg)^2
  \\\leq \sum_{k=1}^{S_n/2} \frac{C_\delta}{k^{a+1}} \sum_{j=S_n-k + 1}^{n - S_n - 1} \sum_{l = j+1}^{k+j} \bigabs{\epsilon^n(l)}^2 
  \leq \sum_{k=1}^{S_n/2} \frac{C_\delta}{k^{a+1}} \sum_{i= S_n/2 + 2}^{n - S_n/2 - 1} k \bigabs{\epsilon^n(i)}^2
  \\\leq \bigg(\sum_{k=1}^\infty\frac{C_\delta}{k^a}\bigg) \Biggbhaa{
    \sum_{i=S_n/2}^\infty \bighaa{ |\epsilon(i)|^2 + |\cev \epsilon(i)|^2 } +\sum_{i=S_n+1}^{n - S_n} \bigghaa{ \frac{u_n + \cev u_n}{n-2S_n} }^2
    |\epsilon^n(i)|^2 },
\end{multline*}
in which the right-hand side converges to $0$ as $n\to\infty$ by the same argument that we use for showing the convergence of the right-hand side in \eqref{for:rec:seq:conv}.

Finally, we show that the fourth term in \eqref{for:ene:splitting:sumterm:limp} converges to $0$. Since $k$ (the distance between particles in terms of their index) is large, we use similar arguments in the following estimate
\begin{multline*}
  \sum_{k = 1 + S_n/2}^n \sum_{j=0}^{n-k} \phi_k \bigg(\sum_{l = j+1}^{k+j} \epsilon^n(l)\bigg)
  \leq \sum_{k = 1 + S_n/2}^n \sum_{j=0}^{n-k} \frac{C_\delta}{k^{a+2}} k^2 \bigg(\frac{1}{k} \sum_{l = j+1}^{k+j} \epsilon^n(l) \bigg)^2 \\
  \leq \sum_{k = 1 + S_n/2}^n \frac{C_\delta}{k^{a+1}} \sum_{j=0}^{n-k}\sum_{l = j+1}^{k+j} \bigabs{\epsilon^n_l}^2 
  \leq \sum_{k = 1 + S_n/2}^n \frac{C_\delta}{k^a} \sum_{i=1}^n \bigabs{\epsilon^n(i)}^2
  = C_\delta \norm{ \epsilon^n }{\ell^2 (\N{})}^2 \sum_{k = 1 + S_n/2}^n \frac1{k^a},
\end{multline*}
which converges to $0$ as $n \to \infty$ since $a > 3/2 > 1$.
\end{proof}

\begin{rem}
A careful study of the above proof (in 
particular the proof of the limsup inequality) shows that the 
weaker condition given by $V''(x) < c_\delta |x|^{-a - 3/2}$ for 
any $x \in (\delta, \infty)$ would be enough as long as
\eqref{for:ests:V:Vprime} holds. Since we do not know of an
interesting example of an interaction potential which 
satisfies this weakened version of {\bf($a$-Dec)}, we
have assumed {\bf($a$-Dec)} for convenience. 

Moreover, it may be possible to allow for interaction potentials 
$V$ whose tail decreases asymptotically slower than $x^{-a}$ for 
any $a > 3/2$, but asymptotically faster than $x^{-3/2}$. Once 
more, due to a lack of interesting examples of such potentials, we
have not studied this generalization.
\end{rem}

\subsection{Properties of the limit energy and the Euler--Lagrange equation}
\label{ssec:minz:plus:EL:eqn}

The fact that $E_\infty^1$ can be written as
\begin{equation*}
  E_\infty^1 (\epsilon, \cev \epsilon)
  = E_\infty^l (\epsilon) + E_\infty^r (\cev \epsilon)
\end{equation*}
shows that the interaction between the two boundary layers completely decouples as $n\to\infty$. For this reason we focus on $E_\infty^l$, whose domain is given by
\begin{equation*}
  \Dom E^l_\infty 
  := \bigaccv{ \epsilon \in \ell^2 (\N{}) }{ \epsilon(i) \geq -1 \: \: \forall \, i \geq 1 },
\end{equation*}

\begin{lem} \label{lem:ex:and:uniq:Elinf}
$E^l_\infty$ has a unique minimiser, denoted $\bar{\epsilon}$, on $\Dom E_\infty^l$. Moreover, $\bar \epsilon \in \operatorname{int} \bighaa{ \Dom E_\infty^l }$.
\end{lem}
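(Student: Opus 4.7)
My plan is to apply the direct method of the calculus of variations, exploiting the functional structure already set up in the proof of Theorem \ref{thm:Gamma:conv}. The starting observation is that $\phi_k(0) = 0$ for all $k$ and $\sigma^\infty \in \ell^2(\N{})$ by Lemma \ref{lem:sigman}, so $E^l_\infty(0) = 0$ and $\inf_{\Dom E^l_\infty} E^l_\infty \leq 0$. For a minimising sequence $(\epsilon^m) \subset \Dom E^l_\infty$, coercivity in $\ell^2(\N{})$ follows by essentially the same chain of estimates used in the compactness part of Theorem \ref{thm:Gamma:conv}: split $\epsilon^m = \epsilon^m_+ - \epsilon^m_-$, retain only the nearest-neighbour terms $\sum_j \phi_1(\epsilon^m(j)) \geq \sum_j \Phi_1(\epsilon^m(j))$ from Lemma \ref{lem:lb_single_term}, complete the square against $(\sigma^\infty, \epsilon^m)_{\ell^2}$ to absorb the negative part, and then use the linear growth of $\Phi_1$ at $+\infty$ to produce a uniform $\ell^\infty$ bound on $\epsilon^m_+$ which upgrades to a uniform $\ell^2$ bound. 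Passing to a weakly convergent subsequence $\epsilon^m \weakto \bar\epsilon$ in $\ell^2(\N{})$, pointwise convergence ensures $\bar\epsilon(i) \geq -1$ for each $i$, so $\bar\epsilon \in \Dom E^l_\infty$. The linear term is weakly continuous since $\sigma^\infty \in \ell^2$; for $Q_\infty$, each finite window $\sum_{l=j+1}^{k+j} \epsilon^m(l)$ converges (as a test against a finitely supported sequence), so continuity of $\phi_k$ on $(-k,\infty)$ gives pointwise convergence of the summands, and Fatou's lemma on $\N{} \times \N{}$ (the summands being nonnegative by Lemma \ref{lem:lb_single_term}) yields $Q_\infty(\bar\epsilon) \leq \liminf Q_\infty(\epsilon^m)$. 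Hence $\bar\epsilon$ is a minimiser.

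For uniqueness, I would use strict convexity. By {\bf(Cvx)}, each $\phi_k$ is strictly convex on $(-k,\infty)$. In particular, the $k = 1$ part of $Q_\infty$ equals $\sum_{j \geq 0} \phi_1(\epsilon(j+1))$, which is already a strictly convex function of $\epsilon$ on $\Dom E^l_\infty$. Adding the remaining nonnegative convex terms for $k \geq 2$ and the continuous linear term $(\sigma^\infty, \epsilon)_{\ell^2}$ preserves strict convexity of $E^l_\infty$ on the convex set $\Dom E^l_\infty$, from which uniqueness of the minimiser is immediate.

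For the interior claim, the key input is {\bf(Sing)}. Since $E^l_\infty(\bar\epsilon) \leq 0 < \infty$, every summand of $Q_\infty(\bar\epsilon)$ is finite; in particular, for every $i \geq 1$ the term $\phi_1(\bar\epsilon(i)) = V(1 + \bar\epsilon(i)) - V(1) - V'(1) \bar\epsilon(i)$ is finite, which by {\bf(Sing)} forces $1 + \bar\epsilon(i) > 0$ strictly. Because $\bar\epsilon \in \ell^2(\N{})$, we have $\bar\epsilon(i) \to 0$, so
\begin{equation*}
    \eta := \inf_{i \geq 1} \bigl( 1 + \bar\epsilon(i) \bigr) > 0.
\end{equation*}
Any $\epsilon \in \ell^2(\N{})$ with $\|\epsilon - \bar\epsilon\|_{\ell^2} < \eta$ satisfies $|\epsilon(i) - \bar\epsilon(i)| < \eta$ componentwise, hence $\epsilon(i) > -1$ for all $i$, placing the open $\ell^2$-ball of radius $\eta$ around $\bar\epsilon$ inside $\Dom E^l_\infty$.

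The main obstacle, to my mind, is not any single step but the bookkeeping needed to reuse the coercivity/lsc machinery of Theorem \ref{thm:Gamma:conv} in the infinite-domain setting: one must check that the square-completion trick still works when $\epsilon$ is a genuine $\ell^2(\N{})$-sequence rather than a finite vector, and that Fatou's lemma applies in the weak-$\ell^2$ limit. Both points go through because $\sigma^\infty \in \ell^2(\N{})$ and because each $\phi_k$ is nonnegative and continuous on its domain of definition, so no new analytic ingredient beyond Lemmas \ref{lem:lb_single_term} and \ref{lem:sigman} is required.
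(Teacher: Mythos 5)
Your proof is correct, but it takes a genuinely different route to existence from the one in the paper. The paper appeals to the fundamental theorem of $\Gamma$--convergence: the minimisers of $E_n^1$ satisfy $E_n^1 \leq E_n^1(0) = 0$, hence are $\ell^2$-precompact by the compactness half of Theorem~\ref{thm:Gamma:conv}, and any weak cluster point is a minimiser of $E_\infty^1$ by the $\Gamma$-liminf and $\Gamma$-limsup inequalities. You instead run the direct method on $E^l_\infty$ from scratch: a minimising sequence, coercivity via the $\Phi_1$/square-completion estimate, weak compactness in $\ell^2(\N{})$, and weak lower semicontinuity of $Q_\infty$ via Fatou (each finite-window sum $\sum_{l=j+1}^{k+j}\epsilon^m(l)$ is a pairing against a finitely supported sequence, so converges under weak convergence, and $\phi_k$ is continuous and nonnegative). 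This is more self-contained — it does not use that $E^l_\infty$ arises as a $\Gamma$-limit at all — but it effectively re-proves, for the limit functional, the same estimates that the compactness part of Theorem~\ref{thm:Gamma:conv} establishes for $E_n^1$, so it is not shorter. The uniqueness argument (strict convexity coming from the $k=1$ nearest-neighbour terms, which depend on single components) is essentially identical to the paper's. For the interior claim, the paper reuses the observation at the start of the limsup proof that finite energy forces a uniform bound $\delta - 1 \leq \epsilon(i) \leq 1/\delta$ (i.e.\ $\bar\epsilon \in X_\delta$); you instead derive strict positivity of each $1 + \bar\epsilon(i)$ from finiteness of $\phi_1(\bar\epsilon(i))$ and {\bf(Sing)}, then combine with $\bar\epsilon(i)\to 0$ (from $\bar\epsilon \in \ell^2$) to get a positive infimum $\eta$ and hence an open ball in $\Dom E_\infty^l$. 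Both interior arguments are valid; the paper's gives the uniform bound directly from the energy bound, while yours needs the $\ell^2$ decay as an extra ingredient. One small imprecision: you say the $\ell^\infty$ bound on $\epsilon^m_+$ "upgrades to" an $\ell^2$ bound, but the actual mechanism (as in the paper's compactness proof) is that the $\ell^\infty$ bound lets you replace $\Phi_1$ by its quadratic lower bound on the bounded range, and then the uniform bound on $\sum_j \Phi_1(\epsilon^m_+(j))$ yields the $\ell^2$ bound.
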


\begin{proof}
We note that $E^l_\infty$ is bounded from below by 
\eqref{for:pf:thm:cpness:1} and \eqref{for:thm:Gamma:conv:limsup}. It is not identical to $\infty$, because $E^l_\infty(0)= 0$.

By the standard properties of $\Gamma$--convergence, Theorem \ref{thm:Gamma:conv} implies that the unique minimisers of $E_n^1$ (see Section \ref{ssec:BL:ene}) converge to a minimiser of $E^1_\infty$ in $\Dom E_\infty^1$. Since $E^1_\infty$ is strictly convex (by the argument that implies the strict convexity of $E_n^1$ in Section \ref{ssec:BL:ene}), the minimiser is unique. By the argument at the beginning of the proof of \eqref{for:thm:Gamma:conv:limsup}, it follows that  $\bar \epsilon \in \operatorname{int} \bighaa{ \Dom E_\infty^l }$.
\end{proof}

To compare the Euler--Lagrange equation with the equation for the boundary layer in \eqref{intro:eqn:BL:ell2}, we change variables once more. Let $u_i$ be the blown-up perturbation of the particles with respect to the equidistant configuration, i.e.
\begin{equation*}
  u(i) := \sum_{j=1}^i \epsilon(j),
  \quad u(0) := 0.
\end{equation*}
This transformation defines a bijection between $\Dom E_\infty^1$ and the set of sequences given by
\begin{equation*}
  \mathcal U := \bigaccv{ \big(0,u(1), u(2), \ldots\big) }{ Du \in \ell^2 (\N{}) \text{ and } Du(i) \geq -1 \text{ for all } i \geq 1 },
\end{equation*}
where $Du$ denotes the finite difference
\begin{equation*}
  Du(i) = u(i) - u(i-1). 
\end{equation*}
$\mathcal U$ is a subset of the Hilbert space
\begin{equation*}
  \mathcal{W}:=\bigaccv{ \big(0,u(1), u(2), \ldots\big) }{
  Du \in \ell^2 (\N{})}\quad\text{where}\quad
  (u,v)_{\mathcal{W}} = (Du,Dv)_{\ell^2(\N{})}.
\end{equation*}

We note that $\mathcal U$ is a convex subset of $\mathcal W$, and  since $\bar \epsilon$ is in the interior of $\Dom E_\infty^l$, it holds that $\bar u$, the image of $\bar{\epsilon}$ under this change of variable, is in the interior of $\mathcal U$. Therefore, the Euler--Lagrange equation is given by 
\begin{equation} \label{for:EL:eqn:u:impl}
  0 = \frac {\partial E_\infty^1 (u)}{ \partial u(i) },
  \quad \text{for all } i \geq 1.
\end{equation}
By taking the derivative of $E_\infty^1$, a straightforward calculation shows that the constant term in $\phi_k$ vanishes, and that the linear term of $\phi_k$ cancels with the linear term involving $\sigma^\infty$. The explicit form of \eqref{for:EL:eqn:u:impl} is given by
\begin{equation} \label{for:EL:eqn:u:expl}
  \left\{ \begin{aligned}
    &0 = \sum_{\substack{ j = 0 \\ j \neq i }}^\infty V' \big(u(i) - u(j) + i-j\big),
    \quad \text{for all } i \geq 1, \\
    &0 = u(0), \\
    &Du \in \ell^2 (\N{}).
  \end{aligned} \right.
\end{equation}
It is easy to see that \eqref{for:EL:eqn:u:expl} is equivalent to \eqref{intro:eqn:BL:ell2}.

\subsection{The case $a < 3/2$}
\label{ssec:a:smaller:3over2}

Here we motivate why the case $a < 3/2$ is significantly different from Theorem \ref{thm:Gamma:conv}. We do this by separating two scenarios; $a \leq 1$ and $1 < a < 3/2$.

Many steps in the proof of Theorem \ref{thm:Gamma:conv} require $a > 1$. The main reason for this requirement is that $a \leq 1$ does not guarantee integrability of the tail of $V$. Moreover, our choice for the variables given by $\epsilon^n$ relies heavily upon the fact that the particles in the bulk should be equispaced.
As remarked above, this was expected due to the results of 
\cite{VanMeursMunteanPeletier14}: these break down when the tail of $V$ fails to be
integrable. 

The case $1 < a < 3/2$ is more delicate. To illustrate that Theorem \ref{thm:Gamma:conv} does not hold, we show that the functional $E_\infty^1$ is not bounded from below in this case. As a
consequence, it would seem that there is a term which has an energy scaling which is neglected in this case, and lies between the bulk energy due to the equispaced configuration and the boundary layer energy. We expect that this term describes a correction to the bulk profile, which appears because the boundary layers begin to interact with each other as the interactions become more nonlocal.

To show that $E_\infty^1$ is not bounded from below when $1 < a < 3/2$, we assume that $V''(x) \simeq x^{-a-2}$ for large $x$, which implies (by a similar argument leading to \eqref{for:ests:V:Vprime}) that $|V'(x)| \simeq x^{-a-1}$ and $V(x) \simeq x^{-a}$ for large $x$. We claim that $\sigma^\infty_i \simeq i^{1-a}$, which implies
\begin{equation*}
  \sigma^\infty \in \ell^p (\N{})
  \: \Longleftrightarrow \:
  p > \frac1{a - 1}.
\end{equation*}
To prove this claim, we use $V'(x) \simeq x^{-a-1}$ for large $x$ to obtain
\begin{equation*}
  \sigma^\infty_i 
  = \sum_{k = i+1}^\infty (k-i) \bigabs{ V' (k) }
  \simeq \sum_{j = 1}^\infty \frac j{(j + i)^{a+1}}
  \quad \text{for large } i.
\end{equation*}
Then, we estimate from above and below to obtain the desired result:
\begin{align*}
  \sigma^\infty_i 
  &\lesssim \sum_{j = 1}^\infty \frac1{(j + i)^{a}} 
  \simeq i^{1 - a}, \\
  \sigma^\infty_i 
  &\geq \sum_{j = 1}^i \frac j{(2 i)^{a+1}} + \sum_{j = i+1}^\infty \frac{1/2}{(j + i)^{a}}
  \simeq i^{1 - a}.
\end{align*}

Next we estimate $Q_\infty (\epsilon)$ (i.e.~the first term of $E_\infty^1$ as in \eqref{for:defn:Einfty}) from above for $(\epsilon, \epsilon) \in \Dom E_\infty^1$ satisfying $-1/2 \leq \epsilon(i) \leq 0$ for all $i \geq 1$. By first using Lemma \ref{lem:lb_single_term} and then applying Jensen's inequality, we obtain
\begin{align*}
  Q_\infty (\epsilon)
  = \sum_{k=1}^\infty\sum_{j=0}^\infty \phi_k \bigg(\sum_{l = j+1}^{k+j} \epsilon(l)\bigg)
  \lesssim \sum_{k=1}^\infty\sum_{j=0}^\infty k^{-a-2} k^2 \bigg( \frac1k \sum_{l = j+1}^{k+j} \epsilon(l)\bigg)^2
  \leq \sum_{k=1}^\infty\sum_{j=0}^\infty k^{-a-1} \sum_{l = j+1}^{k+j} \epsilon(l)^2.
\end{align*}
By the computations in Appendix \ref{app:sigman:computs} it then follows that
\begin{align*}
  Q_\infty (\epsilon)
  \lesssim \sum_{j = 1}^{\infty} \bigghaa{ \sum_{k=1}^\infty k^{-a} } \epsilon_j^2
  \simeq \norm{ \epsilon }{\ell^2 (\N{})}^2.
\end{align*}

It is now easy to find $(\epsilon, \cev \epsilon) \in \Dom E_\infty^1$ for which $E_\infty^1 (\epsilon, \cev \epsilon) = -\infty$. We set $\epsilon(i) = - i^{-b}/2$ with $1/2 < b < 2 - a$ and $\cev \epsilon = \epsilon$. By these choices, we have
\begin{gather*}
    (\sigma^\infty, \epsilon)_{\ell^2 (\N{})} 
    \simeq - \sum_{i=1}^\infty i^{1-a} i^{-b}
    \leq - \sum_{i=1}^\infty \frac1i
    = -\infty, \\
    Q_\infty (\epsilon) 
    \lesssim \sum_{i=1}^\infty \frac1{i^{2b}}
    = C,
\end{gather*}
and analogous estimates for $\cev \epsilon$, and thus $E_\infty^1 (\epsilon, \cev \epsilon) = -\infty$.

\subsection{Computation of $\sigma^n$}
\label{app:sigman:computs}

In this section we fix $n$, and show that 
\begin{equation} \label{for:linr:term}
   \sum_{k=1}^n \sum_{j=0}^{n-k}
   \sum_{l=j+1}^{k+j} V'(k) \epsilon(l) \\
 = \sigma \cdot (\epsilon + \cev \epsilon),
\end{equation}
where $\sigma$ is given by \eqref{for:defn:sigman}, i.e.
\begin{equation*} 
  \sigma_i 
  = \sum_{k = i+1}^{n-i} \bigbhaa{ (k-i) \wedge (n-i+1-k) } \bigabs{ V' (k) }
  \quad \text{for } i = \acc{1, \ldots, \lfloor n/2 \rfloor}.
\end{equation*}

We start by changing the order of summation in 
\begin{multline*} 
   \sum_{k=1}^n V'(k) \sum_{j=0}^{n-k}
   \sum_{l=j+1}^{k+j} \epsilon(l)
 = \sum_{k=1}^n \sum_{l=1}^n \epsilon(l) V'(k) \sum_{j = 0 \vee (l - k) }^{ (n-k) \wedge (l-1) } 1 \\
 = \sum_{k=1}^n \sum_{l=1}^n V'(k) \bigbhaa{ k \wedge l \wedge (n-k+1) \wedge (n-l+1) } \epsilon(l) =: v \cdot \tilde A \epsilon,
\end{multline*}
where the vector $v \in \R n$ is defined by $v_k := V'(k) < 0$, and the matrix $\tilde A \in \R{n \times n}$ is illustrated in Figure \ref{fig:index:matrices}. 

%
%
%

Since $\epsilon \cdot \mathbf 1 = 0$, it holds that $\tilde A \epsilon = (\tilde A - B) \epsilon$ for any matrix $B$ whose rows are multiples of $\mathbf 1$. We take $B$ such that the entries in its $i$-th row equal $i \wedge (n-i+1)$, and set $A := -(\tilde A - B)^T$ (see Figure \ref{fig:index:matrices} for its structure). Then
\begin{equation*}
  v \cdot \tilde A \epsilon
  = -v \cdot A^T \epsilon
  = -\epsilon \cdot A v
  = \sum_{l = 1}^{\lfloor n/2 \rfloor} \bigbhaa{ 
      (-A v)(l) \epsilon(l) + (-A v)(n-l+1) \cev \epsilon(l) }
\end{equation*}
From Figure \ref{fig:index:matrices} it is easy to see that the vector $-A v$ has reversal symmetry, and \eqref{for:linr:term} follows.

\begin{figure}[ht]
\centering
\begin{tikzpicture}[scale=0.35, >= latex]
    \def \r {0.15}
    
\begin{scope}[shift={(0,0)},scale=1]
  \foreach \i in {1,...,12}{
    \foreach \j in {1,...,12}{
      \fill[gray] (\i, \j) circle (\r);
    }
  }
  
  \foreach \i in {1,2,5,6}{
    \draw (\i, \i) rectangle (13 - \i, 13 - \i);
  }
  \foreach \i in {3,4}{
    \draw[dotted] (\i, \i) rectangle (13 - \i, 13 - \i);
  }
  
  \draw (1-\r,12) node[left] {$1$};
  \draw (1-\r,1) node[left] {$n$};
  \draw (1,12+\r) node[above] {$1$};
  \draw (12,12+\r) node[above] {$n$};
  \draw[<-] (12,11.5) -- (13,11.5) node[right] {$1$};
  \draw[<-] (11,9.5) -- (13,9.5) node[right] {$2$};
  \draw[<-] (8,7.5) -- (13,7.5) node[right] {$\dfrac n2 - 1$};
  \draw[<-] (7,6.5) -- (13,5) node[right] {$\dfrac n2$}; 
  \draw (0.5,6.5) node[left] {\LARGE $\tilde A =$};
\end{scope}

\begin{scope}[shift={(22,0)},scale=1]
  \foreach \i in {1,...,12}{
    \foreach \j in {1,...,12}{
      \fill[gray] (\i, \j) circle (\r);
    }
  }
  
  \foreach \i in {1,2,5,6}{
    \draw (\i, 1) -- (6, 7 - \i) -- (7, 7 - \i) -- (13 - \i, 1);
    \draw (\i, 12) -- (6, 6 + \i) -- (7, 6 + \i) -- (13 - \i, 12);
  }
  \foreach \i in {3,4}{
    \draw[dotted] (\i, 1) -- (6, 7 - \i) -- (7, 7 - \i) -- (13 - \i, 1);
    \draw[dotted] (\i, 12) -- (6, 6 + \i) -- (7, 6 + \i) -- (13 - \i, 12);
  }
  \draw (6,6) -- (6,7);
  \draw (7,6) -- (7,7);
  
  \draw (1-\r,12) node[left] {$1$};
  \draw (1-\r,1) node[left] {$n$};
  \draw (1,12+\r) node[above] {$1$};
  \draw (12,12+\r) node[above] {$n$};
  \draw[<-] (11.5,11.5) -- (13,11.5) node[right] {$0$};
  \draw[<-] (9.5,10.5) -- (13,9.5) node[right] {$1$};
  \draw[<-] (7.5,11.5) -- (13,7.5) node[right] {$\dfrac n2 - 2$};
  \draw[<-] (6.5,1) -- (13,5) node[right] {$\dfrac n2 - 1$}; 
  \draw (0.5,6.5) node[left] {\LARGE $A =$};
  \draw (10.5, 6.5) node {\huge $0$};
  \draw (2.5, 6.5) node {\huge $0$};
\end{scope}
\end{tikzpicture}
\caption{Schematic picture of the two index matrices $\tilde A$ and $A = \tilde A - B$. The lines are level sets which connect entries with the same value. We have taken $n$ to be even for convenience.}\label{fig:index:matrices}
\end{figure}
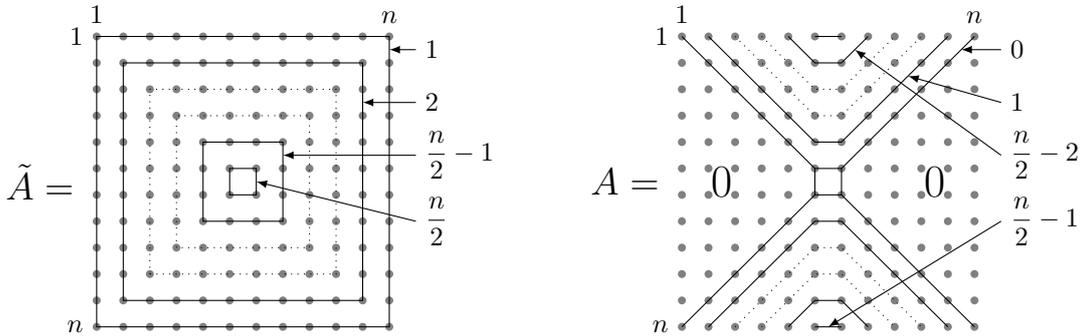

\section{Numerics}
\label{sec:numerics}
In this section, we investigate by means of numerical computations to what extent the solution to the equation for the boundary layer in \eqref{intro:eqn:BL:ell2} matches with the solution to the force balance in \eqref{eq:equilibrium} for several values of $n$. The computations are performed for two physically--motivated choices of the interaction potential $V$.

\subsection{Numerical method for solving \eqref{intro:eqn:BL:ell2}.}

To approximate the infinite sum in \eqref{intro:eqn:BL:ell2} by a finite sum which depends on a finite set of unknowns, we assume the particles to be equispaced after a fixed index $I$, i.e.~$y_j = y_I + (j - I)$ for all $j \geq I$. This choice is equivalent to finding a minimiser of $E_\infty^l$ in $\bigaccv{ \epsilon \in \Dom E_\infty^l }{ \epsilon(j) = 0 \text{ for all } j > I }$, and the related force balance can be written as
\begin{equation}
  0 = \sum_{ \substack{ j = 0 \\ j \neq i } }^I - V' ( y_i - y_j )
      - \sum_{ j = I + 1 }^\infty V' ( y_i - y_I + I - j ),
      \qquad \forall \ i = 1, \ldots, I.
      \label{num:sums:temp}
\end{equation}

In order to deal with the infinite sum in \eqref{num:sums:temp} numerically, we introduce a second approximation. We use the Euler--Maclaurin summation formula (see, for example, \cite{CKP}) to approximate the tail of the sum, given by all indices $j$ for which are larger than some fixed index $J > I$, with an integral. To this end, we use oddness of $V'$ to rewrite the infinite sum in \eqref{num:sums:temp} as
\begin{equation*}
   \sum_{ j = I + 1 }^\infty V' ( y_i - y_I + I - j )
   = \sum_{ j = I + 1 }^{J-1} V' ( y_i - y_I + I - j )
      - \sum_{ k = 0 }^{\infty} V' ( y_I - y_i - I + J + k )
\end{equation*} 
The Euler--Maclaurin summation formula gives us the result that
\begin{equation*}
 \sum_{k = 0}^{\infty} V' ( y_I - y_i + J-I + k )
 = \int_0^{\infty} V'( y_I - y_i + J - I + r) \ \mathrm{d} r
 - \tfrac{1}{2} V'(y_I - y_i + J - I)
 + R,
\end{equation*}
where the remainder term $R$ is given by (given that $V \in C^3 ([J-I, \infty))$)
\begin{equation} \label{for:defn:R:E-McL}
  R := \int_0^\infty \frac{B_2(\{r\})}{2} V'''(y_I - y_i + J - I + r) \ \mathrm{d} r,
\end{equation}
where $B_2(\{\cdot\})$ is the 1-periodic extension of the second Bernoulli polynomial. If we further assume that $V''' \leq 0$, we can use $\| B_2(\{\cdot\}) \|_\infty = \tfrac16$ to estimate
\[
  |R| 
  \leq \frac1{12} \int_0^\infty \big| V'''(y_I - y_i + J - I + r) \big| \ \mathrm{d} r
  = \frac1{12} V''(y_I - y_i + J - I)
  \lesssim |J-I|^{-a-2}.
\]
Depending on the regularity and monotonicity properties of a given $V$, we can obtain stronger estimates on $R$ by using integration by parts in \eqref{for:defn:R:E-McL}.
Neglecting $R$, we obtain the following non-linear system of equations:
\begin{equation} \label{force:bal:BL:numl}
\left\{ \begin{aligned}
  0 &= - \frac 12 V' ( y_i - y_J ) - V ( y_i - y_J ) - \sum_{ \substack{ j = 0 \\ j \neq i } }^{J-1} V' ( y_i - y_j ),
      &&\forall \ i = 1, \ldots, I, \\
  y_j &= y_I + j - I, &&\forall \ j = I+1, \ldots, J, \\
  y_0 &= 0.
\end{aligned} \right.
\end{equation}
%
%
%

\subsection{Computations for $V (x) = x^{-2}$}
\label{sec:Numerics:a=2}

This potential is considered in \cite{Hall2010}, and models the interaction between dislocation dipoles. It satisfies all basic assumptions on $V$ described in \S\ref{sec:Setting}, as well as both strengthened hypotheses {\bf(Reg+)} and {\bf(Dec+)} given in \S\ref{sec:Results}. Figure \ref{fig:eps:a2} depicts the solution $\epsilon^l$ to \eqref{force:bal:BL:numl} (with $I = 10^3$ and $J = 10^2$), together with the minimiser of $E_n^1$. 

\begin{figure}[ht]
\centering
\begin{tikzpicture}[scale=1.5]
\node (label) at (0,0){\includegraphics[width=3in]{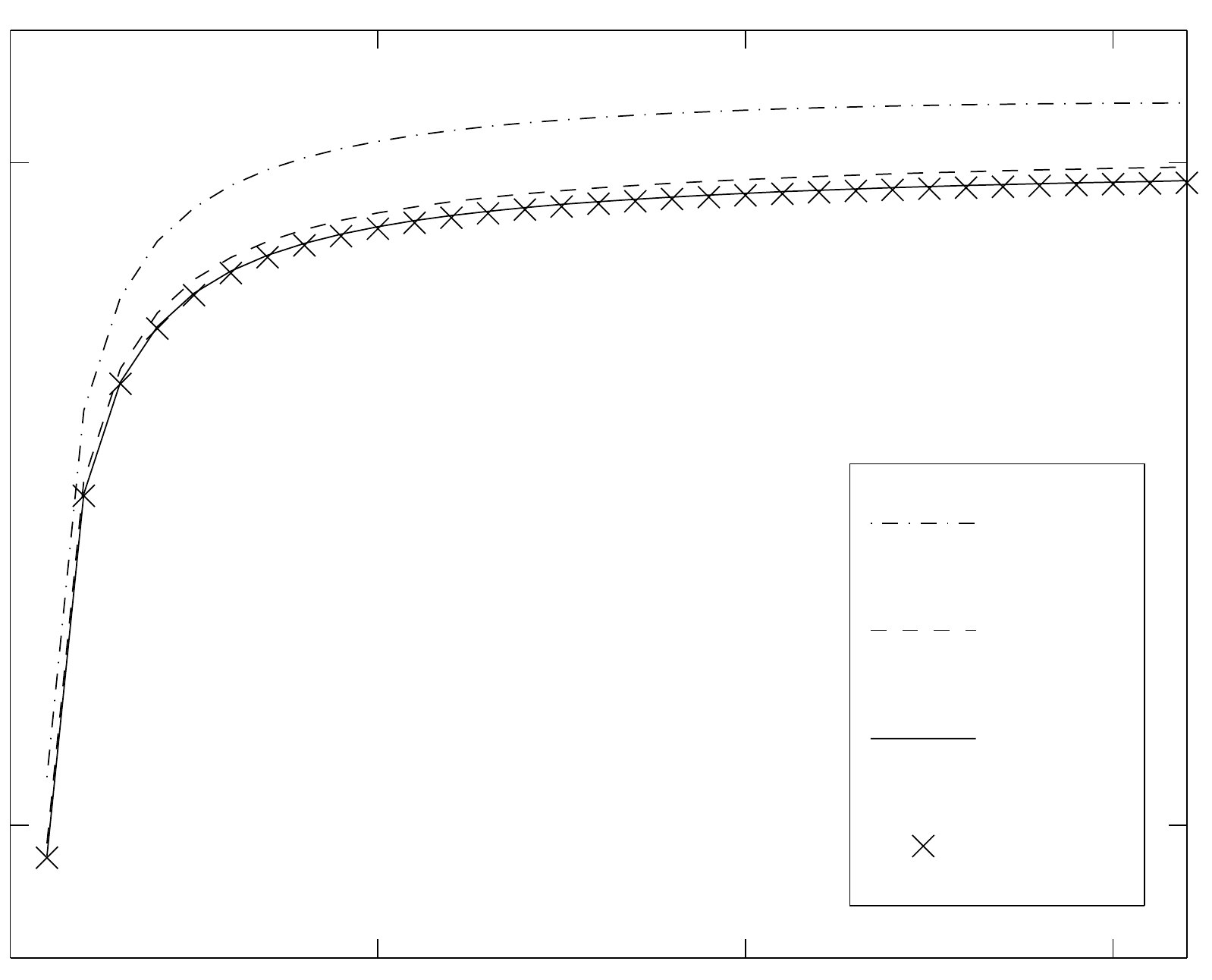}};
\draw (0,-2) node[below] {\LARGE $i$};
\draw (-2.5,0) node[left] {\LARGE $\epsilon^n (i)$};
\draw (-0.94,-2) node[below] {$10$};
\draw (-2.49,-2) node[below] {$0$};
\draw (0.6,-2) node[below] {$20$};
\draw (2.12,-2) node[below] {$30$};
\draw (-2.5,-1.4) node[left] {$-0.1$};
\draw (-2.5, 1.4) node[left] {$0$};
\draw (1.6,-1.5) node[right] {\scriptsize $\epsilon^l (i)$};
\draw (1.6,-1.02) node[right] {\tiny $n = 2^{12}$};
\draw (1.6,-0.58) node[right] {\tiny $n = 2^9$};
\draw (1.6,-0.12) node[right] {\tiny $n = 2^6$};
\end{tikzpicture}
\caption{The three line plots of the minimisers $\epsilon^n (i)$ for $n = 2^6, 2^9, 2^{12}$ illustrate the convergence to the solution $\epsilon^l$ ($\times$) of \eqref{force:bal:BL:numl}.}
\label{fig:eps:a2}
\end{figure}

Next we test the rate of convergence at which $\epsilon^n (i)$ converges to $\epsilon^l (i)$ as $n \to \infty$ for fixed $i$. Since we consider several values of $n$ that are larger than $I$, the accuracy of $\epsilon^n (i)$ may be greater than the accuracy of our solution method for finding $\epsilon^l (i)$. We therefore consider the incremental error given by
\begin{equation} \label{for:defn:incr:error:eps:a2}
  d^n(i) := |\epsilon^{n, 1/2}(i) - \epsilon^{2n, 1/2}(i)| \sim \frac 1 {n^p} \quad \text{for \emph{any} fixed } i \leq n/2.   
\end{equation}
Figure \ref{fig:dn:a2} illustrates the decay rate of $d^n(i)$. This rate is fairly similar to $n^{-1} \log n$, which is expected from \eqref{eq:Formal:a2:logterm}.

\begin{figure}[b]
\centering
\begin{tikzpicture}[scale=1.5]
\node (label) at (0,0){\includegraphics[width=3in, trim={0 6.9cm 0 6.9cm}, clip]{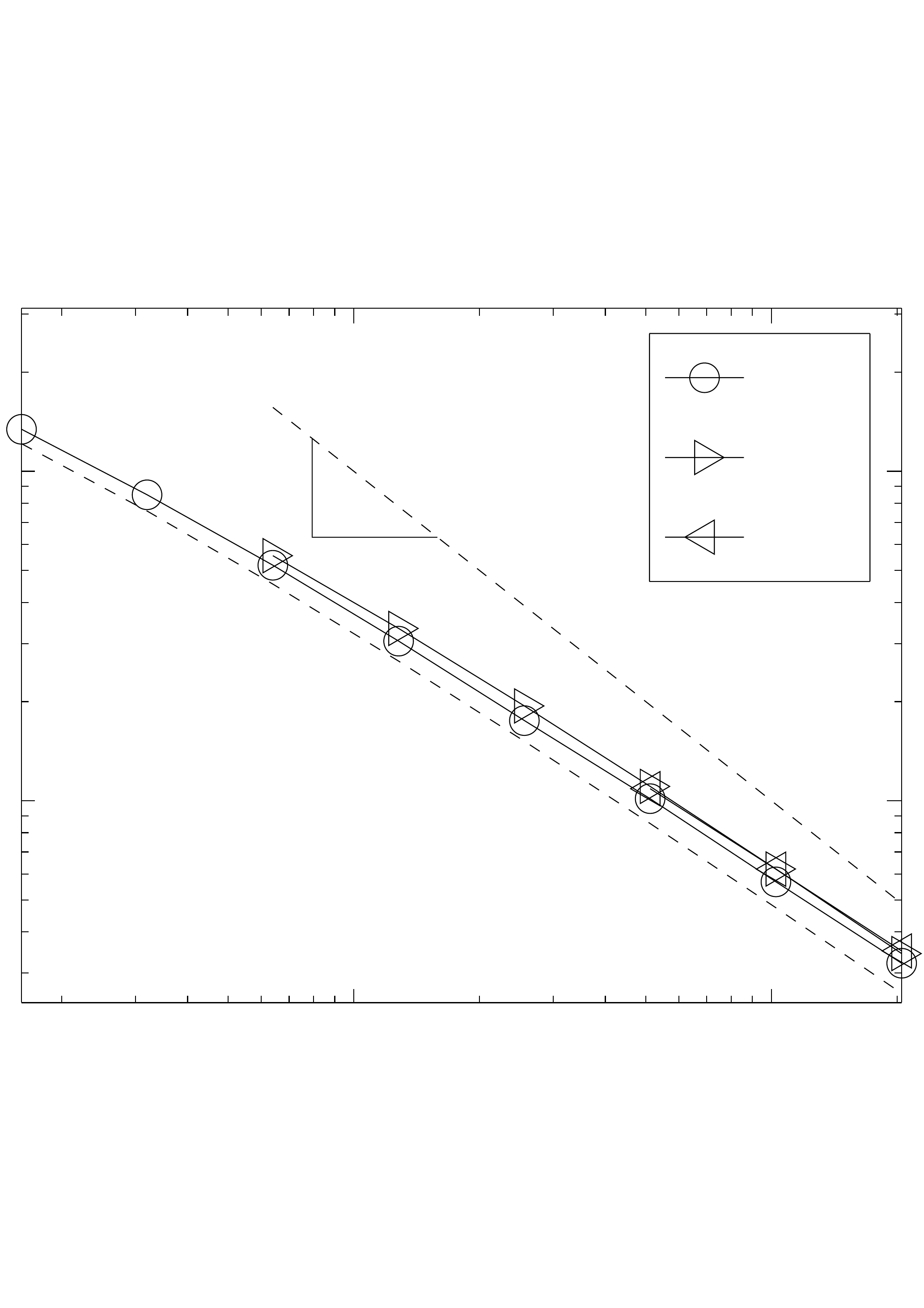}};
\draw (0,-2) node[below] {\huge $n$};
\draw (-2.5,0) node[left] {\LARGE $d^n(i)$};
\draw (-0.56,-2) node[below] {$10^2$};
\draw (1.74,-2) node[below] {$10^3$};
\draw (-2.5, 1) node[left] {$10^{-2}$};
\draw (-2.5,-0.8) node[left] {$10^{-3}$};
\draw (1.55,1.5) node[right] {\footnotesize $d^n(1)$};
\draw (1.55,1.06) node[right] {\footnotesize $d^n(9)$};
\draw (1.55,0.62) node[right] {\footnotesize $d^n(81)$};
\draw (-0.45,-0.45) node {$C \dfrac{\log n}n$};
\draw (-1,0.9) node {$-1$};
\end{tikzpicture}
\caption{The decay rate of the incremental error $d^n(i)$ seems to be independent of $i$. The graphs of $n \mapsto d^n(i)$ for $i = 3, 27$ are visibly indistinguishable from those for $i = 9, 81$, and are therefore omitted. The decay rate of $d^n(i)$ seems similar to $O(n^{-1} \log n)$.}
\label{fig:dn:a2}
\end{figure}

With $\epsilon^l$ we can improve the equispaced zeroth-order estimate of the rescaled minimiser of $E_n$ by
\begin{equation*}
  \hat y(i) := i + \sum_{j = 1}^i \epsilon^l(j) =: n \hat x^n (i),
  \quad \text{for } i = 1,\ldots,\frac n2.
\end{equation*}
Figure \ref{fig:x:y:a2} depicts the related density plots, both in the original variable $x$ (a), and in the rescaled variable $y$ (b). Given $x \in \mathcal D_n$, we define the `discrete density' by
\begin{equation} \label{for:defn:rhoi}
  \rho(i) 
  := \frac{2}{ n \big( x(i+1) - x(i-1) \big) }
  = \frac{2}{ \eps(i+1) + \eps(i) + 2 },
  \quad \text{for all } i =1, \ldots, n-1,
\end{equation}
which measures locally how close neighbouring particles are to each other. The expression in terms of $\epsilon$ shows that, in Figure \ref{fig:x:y:a2}.(b), the $n$-dependent offset of the predictor $\hat y$ is in line with the observations from Figures \ref{fig:eps:a2} and \ref{fig:dn:a2}.

\begin{figure}[h]
\centering
\begin{tikzpicture}[scale=1.25]
\node (label) at (0,0){\includegraphics[width=2.5in]{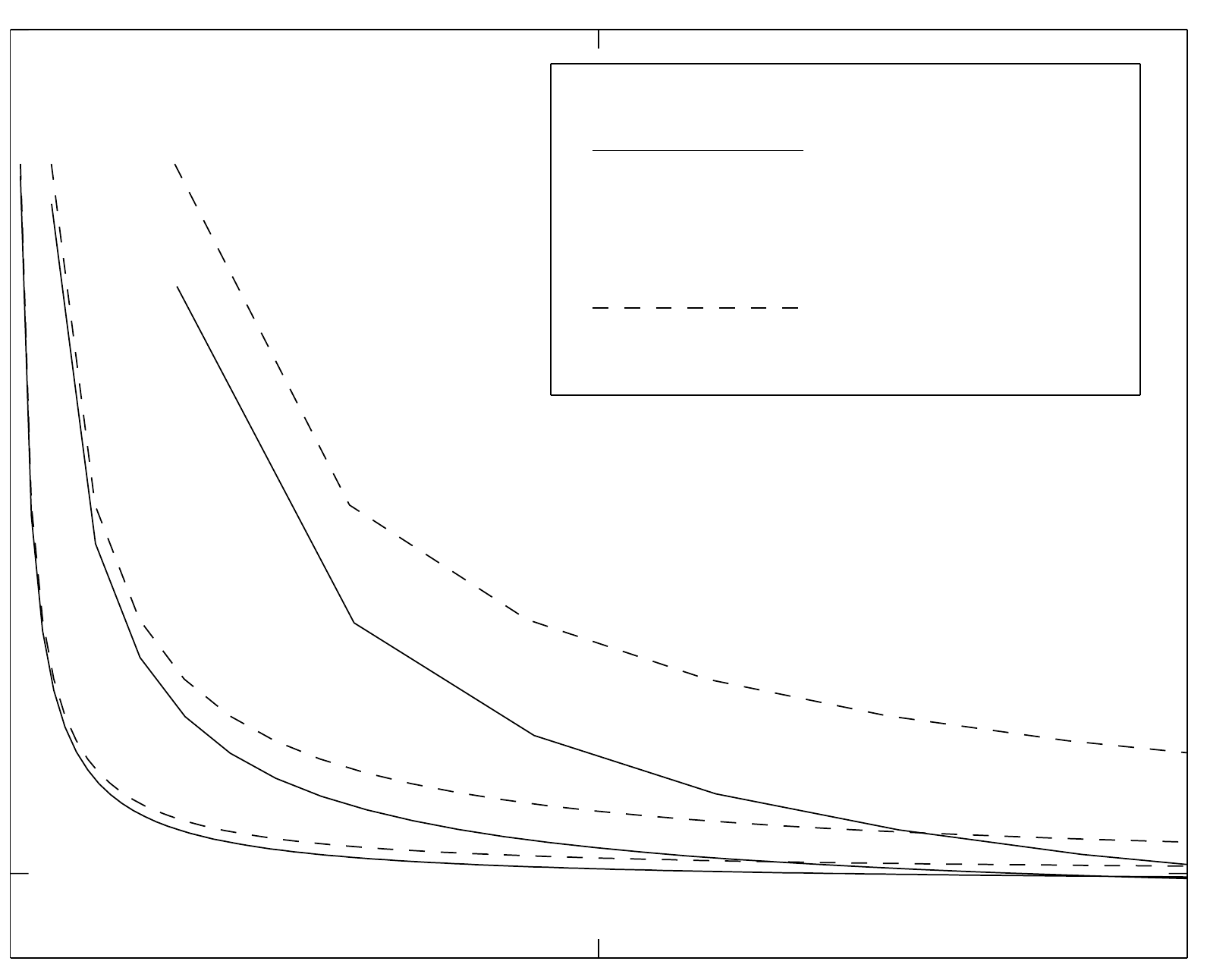}};
\draw (0,2) node[above] {(a)};
\draw (1.25,-2) node[below] {\LARGE $x$};
\draw (-2.5,0) node[left] {\LARGE $\rho$};
\draw (-2.49,-2) node[below] {$0$};
\draw (0,-2) node[below] {$0.05$};
\draw (2.49,-2) node[below] {$0.1$};
\draw (-2.5, 1.95) node[left] {$1.1$};
\draw (-2.5,-1.6) node[left] {$1$};
\draw (0.82, 1.43) node[right] {\tiny $(x^n(i), \rho^n(i))_i$};
\draw (0.9, 0.73) node[right] {\tiny $(\hat x^n(i), \hat \rho(i))_i$};

\begin{scope}[shift={(6,0)}]
\node (label) at (0,0){\includegraphics[width=2.5in]{ya2.pdf}};
\draw (0,2) node[above] {(b)};
\draw (0,-2) node[below] {\LARGE $y$};
\draw (-2.5,0) node[left] {\LARGE $\rho$};
\draw (-2.5, 1.95) node[left] {$1.1$};
\draw (-2.5,-1.6) node[left] {$1$};
\draw (-0.94,-2) node[below] {$10$};
\draw (-2.49,-2) node[below] {$0$};
\draw (0.6,-2) node[below] {$20$};
\draw (2.12,-2) node[below] {$30$};
\draw (1.1,1.5) node[right] {$n = 2^6$};
\draw (1.1,0.92) node[right] {$n = 2^8$};
\draw (1.1,0.35) node[right] {$n = 2^{10}$};
\draw (0.9,-0.25) node[right] {\footnotesize $(\hat y(i), \hat \rho(i))_i$};
\end{scope}
\end{tikzpicture}
\caption{Comparison between: (a) the minimiser $\rho^n(i)$ and the predictor $\hat \rho(i)$, and (b) their horizontally rescaled counterparts by a factor $n$. More precisely, the line graphs are the linear interpolation between the $i$-indexed sets of coordinates $(x^n(i), \rho^n(i))_i$ and $(\hat x^n(i), \hat \rho(i))_i$ in graph (a), and $(y^n(i), \rho^n(i))_i$ and $(\hat y(i), \hat \rho(i))_i$ in graph (b), where $\rho^n$ and $\hat \rho$ are defined in \eqref{for:defn:rhoi} with respect to $x^n$ and $\hat x^n$. We use $n = 2^6, 2^8, 2^{10}$.}
\label{fig:x:y:a2}
\end{figure}

\begin{figure}[b]
\centering
\begin{tikzpicture}[scale=1.5]
\node (label) at (0,0){\includegraphics[width=3in]{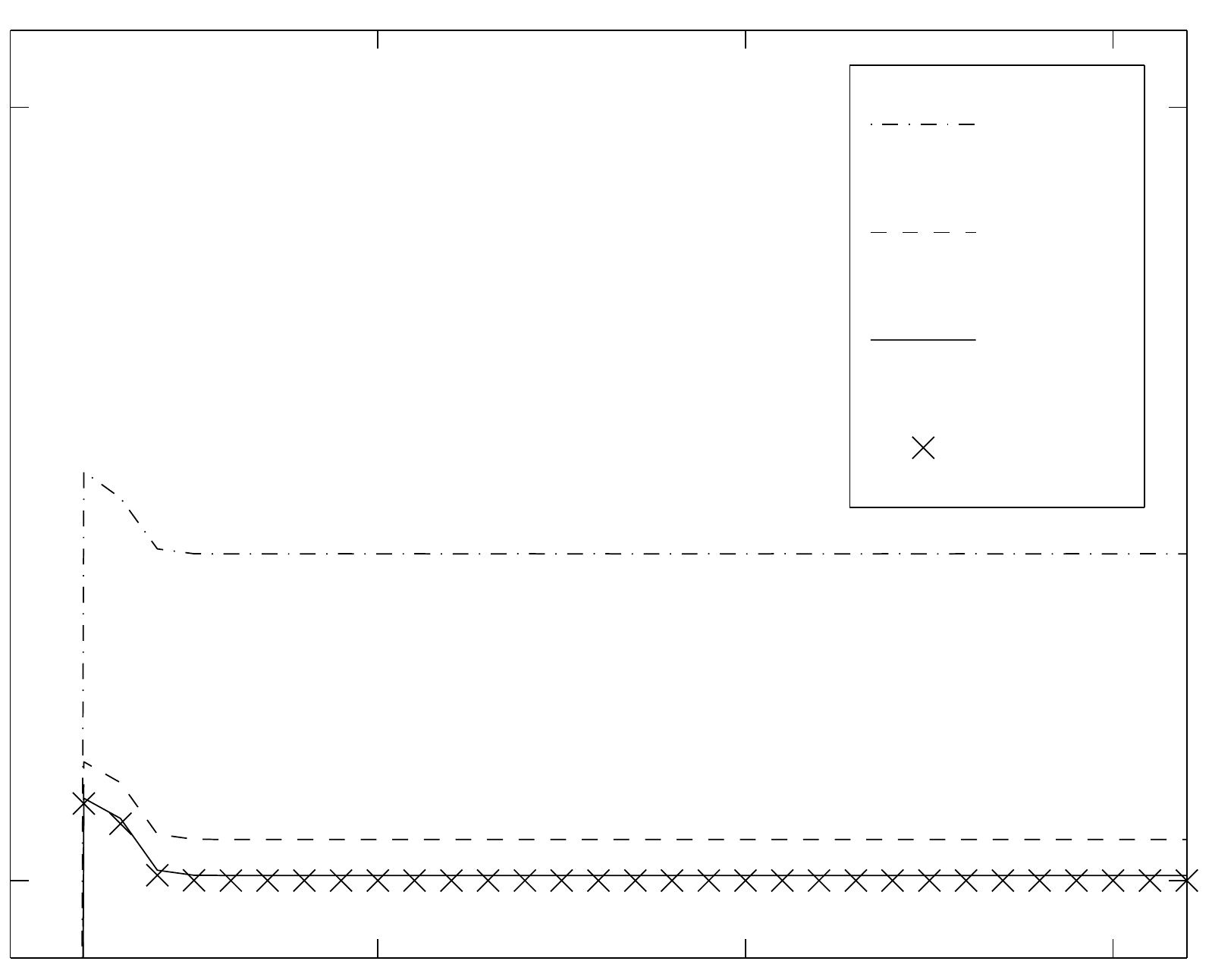}};
\draw (0,-2) node[below] {\LARGE $i$};
\draw (-2.5,0) node[left] {\LARGE $\epsilon^n (i)$};
\draw (-0.94,-2) node[below] {$10$};
\draw (-2.49,-2) node[below] {$0$};
\draw (0.6,-2) node[below] {$20$};
\draw (2.12,-2) node[below] {$30$};
\draw (-2.5,-1.66) node[left] {$0$};
\draw (-2.5, 1.62) node[left] {$10^{-2}$};
\draw (1.6,0.18) node[right] {$\epsilon^l (i)$};
\draw (1.6,0.65) node[right] {\tiny $n = 2^{12}$};
\draw (1.6,1.08) node[right] {\tiny $n = 2^9$};
\draw (1.6,1.54) node[right] {\tiny $n = 2^6$};
\end{tikzpicture}
\caption{The three line plots of the minimisers $\epsilon^n (i)$ for $n = 2^6, 2^9, 2^{12}$ illustrate the convergence to the solution $\epsilon^l$ ($\times$) of \eqref{force:bal:BL:numl}. The data for $\epsilon^n (1)$ are $-O (10^{-1})$, which are relatively far away from the range of the vertical axis.}
\label{fig:eps:Vwall}
\end{figure}

\subsection{Computations for $V (x) = x \coth x - \log | 2 \sinh x |$}

This potential describes the interaction of dislocation walls \cite{Geers2013}. It satisfies all assumptions on $V$ for any $a \in \R{}$. As in Figure~\ref{fig:eps:a2}, Figure~\ref{fig:eps:Vwall} shows the solution $\epsilon^l$ to \eqref{force:bal:BL:numl} (with $I = 200$ and $J = 20$), together with the minimisers $\epsilon^{n, 1/2}$ of $E_n^1$. An intriguing difference with Figure \ref{fig:eps:a2} is that the profiles of $\epsilon^{n, 1/2}$ and $\epsilon^l$ are not monotone: this shows that different potentials satisfying our imposed assumptions can result in qualitatively different boundary layer profiles. Moreover, the values of $\epsilon^{n, 1/2} (i)$ and $\epsilon^l (i)$ are an order of magnitude smaller than those for the $-2$-homogeneous potential, and they decay faster to $0$ as $i$ increases.

Figure \ref{fig:PS:bdd:Vwall} suggests that the incremental errors $d^n(i)$ defined in \eqref{for:defn:incr:error:eps:a2} decay as $n^{-1}$. Again, we observe that this decay is independent of $i$. The decay is faster in comparison to the $-2$-homogeneous interaction potential.

\begin{figure}[ht]
\centering
\begin{tikzpicture}[scale=1.5]
\node (label) at (0,0){\includegraphics[width=3.0375in]{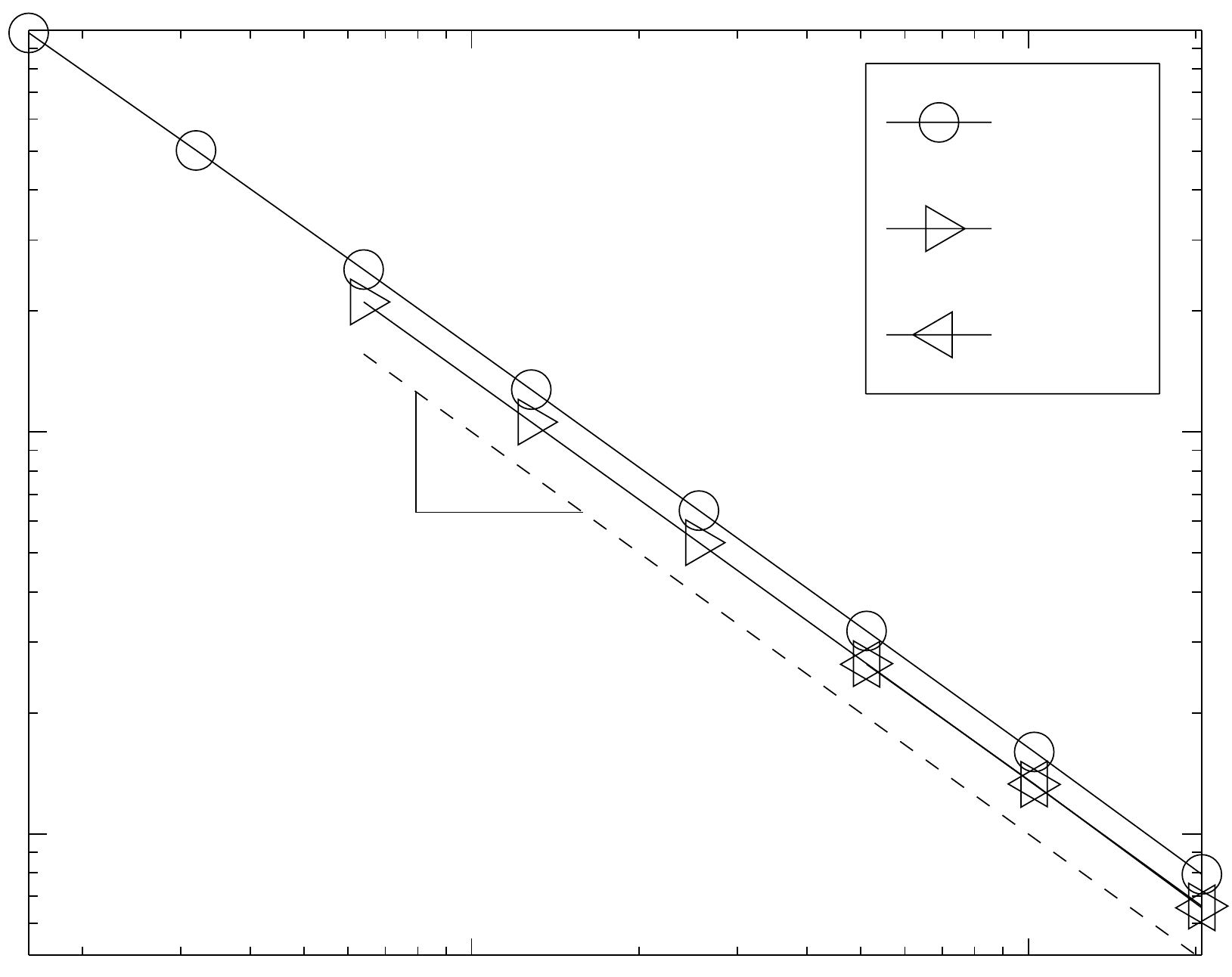}};
\draw (0,-2) node[below] {\huge $n$};
\draw (-2.5,-0.2) node[left] {\LARGE $d^n(i)$};
\draw (-0.56,-2) node[below] {$10^2$};
\draw (1.74,-2) node[below] {$10^3$};
\draw (-2.5,1.9) node[left] {$10^{-2}$};
\draw (-2.5,0.25) node[left] {$10^{-3}$};
\draw (-2.5,-1.43) node[left] {$10^{-4}$};
\draw (1.55,1.5) node[right] {\footnotesize $d^n(1)$};
\draw (1.55,1.06) node[right] {\footnotesize $d^n(9)$};
\draw (1.55,0.62) node[right] {\footnotesize $d^n(81)$};
\draw (-1,0.17) node {$-1$};
\end{tikzpicture}
\caption{The incremental error $d^n(i)$ decays as $O(n^{-1})$, independent of $i$. The graphs of $n \mapsto d^n(i)$ for $i = 3, 27$ are visibly indistinguishable from those for $i = 9, 81$, and are therefore omitted.}
\label{fig:PS:bdd:Vwall}
\end{figure}

Figure \ref{fig:x:y:Vwall} is the counterpart of Figure \ref{fig:x:y:a2}. Compared to Figure \ref{fig:x:y:a2}, the boundary-layer profile is different, and the offset between the minimiser and the predictor is smaller.

\begin{figure}[h]
\centering
\begin{tikzpicture}[scale=1.25]
\node (label) at (0,0){\includegraphics[width=2.5in]{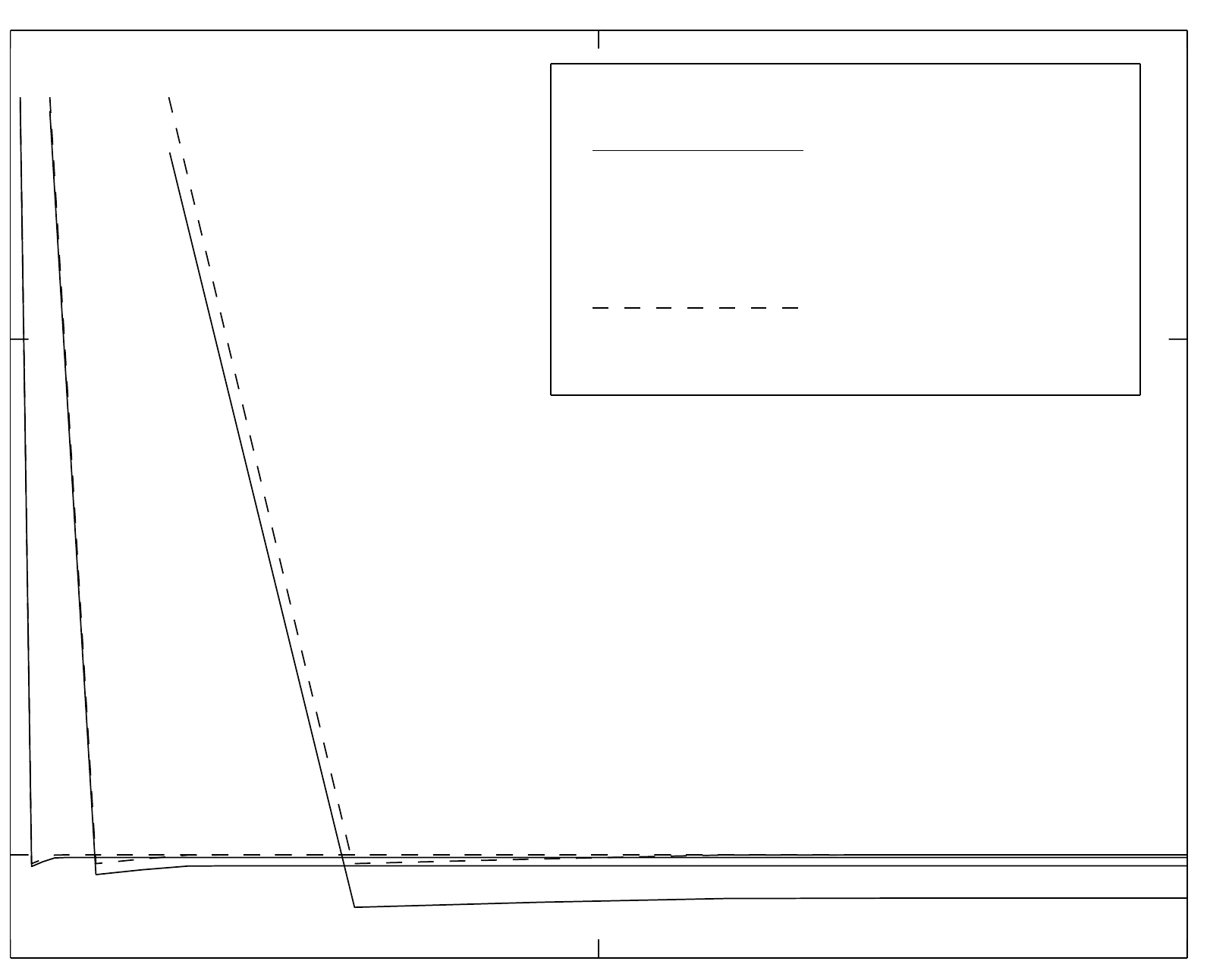}};
\draw (0,2) node[above] {(a)};
\draw (1.25,-2) node[below] {\LARGE $x$};
\draw (-2.5,0) node[left] {\LARGE $\rho$};
\draw (-2.49,-2) node[below] {$0$};
\draw (0,-2) node[below] {$0.05$};
\draw (2.49,-2) node[below] {$0.1$};
\draw (-2.5,0.63) node[left] {$1.05$};
\draw (-2.5,-1.56) node[left] {$1$};
\draw (0.82, 1.43) node[right] {\tiny $(x^n(i), \rho^n(i))_i$};
\draw (0.9, 0.73) node[right] {\tiny $(\hat x^n(i), \hat \rho(i))_i$};

\begin{scope}[shift={(6,0)}]
\node (label) at (0,0){\includegraphics[width=2.51in]{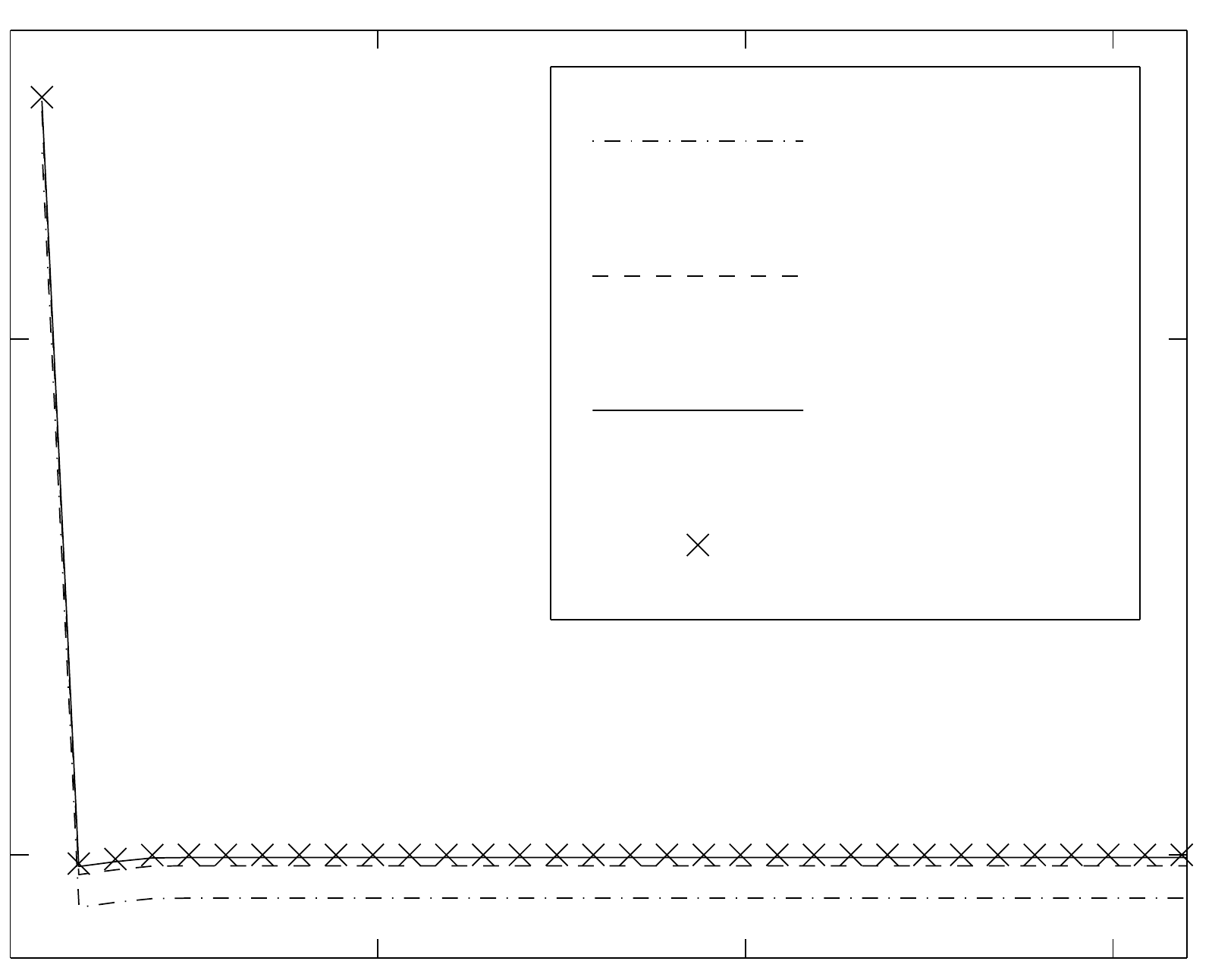}};
\draw (0,2) node[above] {(b)};
\draw (0,-2) node[below] {\LARGE $y$};
\draw (-2.5,0) node[left] {\LARGE $\rho$};
\draw (-2.5,0.63) node[left] {$1.05$};
\draw (-2.5,-1.58) node[left] {$1$};
\draw (-0.94,-2) node[below] {$10$};
\draw (-2.49,-2) node[below] {$0$};
\draw (0.6,-2) node[below] {$20$};
\draw (2.12,-2) node[below] {$30$};
\draw (1.1,1.5) node[right] {$n = 2^6$};
\draw (1.1,0.92) node[right] {$n = 2^8$};
\draw (1.1,0.35) node[right] {$n = 2^{10}$};
\draw (0.9,-0.25) node[right] {\footnotesize $(\hat y(i), \hat \rho(i))_i$};
\end{scope}
\end{tikzpicture}
\caption{Comparison between: (a) the minimiser $\rho^n(i)$ and the predictor $\hat \rho(i)$, and (b) their horizontally rescaled counterparts by a factor $n$. Figure \ref{fig:x:y:a2} provides a more precisely descriptions of the line graphs. We use $n = 2^6, 2^8, 2^{10}$.}
\label{fig:x:y:Vwall}
\end{figure}

\section*{Acknowledgements}
\noindent
  \emph{Thanks:} The authors would like to thank Mark Peletier for 
  valuable discussions, and TU Eindhoven for providing funds to 
  cover research visits by TH and CH.
  TH and PvM would also like to thank the Hausdorff Research 
  Institute for Mathematics in Bonn for hosting them during the
  junior workshop `Analytic approaches to scaling limits for random
  systems' during which work on this project was carried out.
  \medskip

  \noindent
  \emph{Funding:} The work of TH is funded by a public grant overseen by the 
  French National Research Agency (ANR) as part of the 
  ``Investissements d'Avenir'' program (reference:
  ANR-10-LABX-0098).
  
  The work of PvM is partially funded by NWO Complexity grant 645.000.012, and partially by the International Research Fellowship of the Japanese Society for the Promotion of Science, together with the JSPS KAKENHI grant 15F15019.
\medskip

\noindent
\emph{Conflict of interest:} The authors declare that there is no conflict 
of interest regarding this work.


\newcommand{\etalchar}[1]{$^{#1}$}

\end{document}